\newtheorem{theorem}{Theorem}[section]
\newtheorem{lemma}[theorem]{Lemma}
\newtheorem{proposition}[theorem]{Proposition}
\newtheorem{corollary}[theorem]{Corollary}
\theoremstyle{definition}
\newtheorem{definition}[theorem]{Definition}
\theoremstyle{remark}
\newtheorem{remark}[theorem]{Remark}
\newtheorem{example}[theorem]{Example}
\newenvironment{hypothesis}[1]{
	
	\model
}{\endmodel}
\numberwithin{equation}{section}
\newcommand{\RR}{\mathbb{R}}
\newcommand{\CC}{\mathbb{C}}
\newcommand{\NN}{\mathbb{N}}
\newcommand{\ZZ}{\mathbb{Z}}
\newcommand{\cE}{\mathcal{E}}
\newcommand{\cJ}{\mathcal{J}}
\newcommand{\cI}{\mathcal{I}}
\newcommand{\cB}{\mathcal{B}}
\newcommand{\cU}{\mathcal{U}}
\newcommand{\cS}{\mathcal{S}}
\newcommand{\cF}{\mathcal{F}}
\newcommand{\cD}{\mathcal{D}}
\newcommand{\cT}{\mathcal{T}}
\newcommand{\cH}{\mathcal{H}}
\newcommand{\cG}{\mathcal{G}}
\newcommand{\cK}{\mathcal{K}}
\newcommand{\cR}{\mathcal{R}}
\newcommand{\cN}{\mathcal{N}}
\newcommand{\ii}{\mathrm{i}}
\newcommand{\eps}{\varepsilon}
\newcommand{\euler}{\mathrm{e}}
\newcommand{\w}{\mathrm{w}}
\renewcommand{\aa}{\mathfrak{a}}
\newcommand{\hh}{\mathfrak{h}}
\newcommand{\vv}{\mathfrak{v}}
\newcommand{\indic}{\mathbf{1}}
\DeclareMathOperator{\Ran}{Ran}
\DeclareMathOperator{\diam}{diam}
\DeclareMathOperator{\Span}{span}
\newcommand*\Diff[1]{\mathop{}\!\mathrm{d}#1}
\let\Re\relax
\DeclareMathOperator{\Re}{Re}
\let\Im\relax
\DeclareMathOperator{\Im}{Im}
\DeclareMathOperator{\Id}{Id}
\newcommand{\cOp}{B}
\DeclarePairedDelimiter{\abs}{|}{|}
\DeclarePairedDelimiter{\norm}{\lVert}{\rVert}
\newcommand{\bes}{\begin{equation*}}
\newcommand{\ees}{\end{equation*}}
\newcommand{\be}{\begin{equation}}
\newcommand{\ee}{\end{equation}}
\newcommand{\eqs}[1]{\begin{align*}#1\end{align*}}
\definecolor{darkred}{rgb}{0.5,0,0}
\definecolor{darkgreen}{rgb}{0,0.5,0}
\definecolor{darkblue}{rgb}{0,0,0.5}
\title[Control problem via partial harmonic oscillators]
{Control problem for quadratic parabolic differential equations with sparse sensor sets of finite volume or anisotropically decaying density}
\subjclass[2010]{Primary 35B99; Secondary 35Q70, 35Pxx, 93Bxx.}
\keywords{Spectral inequalities, uncertainty relation, dissipation estimates, quadratic parabolic differential equation,
 partial harmonic oscillator, observability, null-controllability.}
\date{\today}
\author[A.~Dicke]{Alexander Dicke}
\author[A.~Seelmann]{Albrecht Seelmann}
\author[I.~Veseli\'c]{Ivan Veseli\'c}
\address[A.D.]{
Dortmund, Germany
}
\email{adicke.math@gmail.com}
\address[A.S., I.V.]{
	Technische Univer\-si\-t\"at Dortmund,
	Germany
}
\urladdr{\url{https://www.mathematik.tu-dortmund.de/lsix/research/analysis/}}
\email{\{aseelman,iveselic\}@mathematik.tu-dortmund.de}
\begin{document}
%
%
\begin{abstract}
	We prove observability and null-controllability for quadratic parabolic differential equations.
	The sensor set is allowed to be sparse and have finite volume if the generator has trivial singular space $S$.
	In the case of generators with singular space $S \neq \{ 0 \}$ the sensor set is permitted to decay in directions determined by $S$.
	The proof is based on dissipation estimates for the quadratic differential operator with
	respect to spectral projections of partial harmonic oscillators and corresponding uncertainty relations.
\end{abstract}
\maketitle
%
%
\section{Introduction}

We treat the control problem for quadratic parabolic differential equations, including hypoelliptic ones.
For a wide class of such equations we show that starting from a given initial state it is possible to drive the
solution to zero in a given time $T\in (0,\infty)$ by steering it from sensor sets that are sparse at infinity and
may have finite measure.
Typical examples of such sensor sets $\omega$ satisfy
for some $\rho, \gamma>0$ and $a \in (0,1)$
the geometric condition
\be\label{eq:decay-bound}
	\gamma^{1+|x|^a}
	\leq
	\frac{|\omega \cap B(x,\rho)|}{|B(x,\rho)|}
	\quad \text{for all}\quad x \in\RR^d.
\ee
Here $B(x,\rho)$ is the ball of radius $\rho > 0$ centered at $x\in\RR^d$.
Such $\omega $ have finite measure, if complemented by a similar upper bound, more specifically
if for some $0< a^- \leq a^+<1$ we have
\be\label{eq:decay-twosided}
	\gamma^{1+|x|^{a^-}}\leq
	\frac{|\omega \cap B(x,\rho)|}{|B(x,\rho)|}
	\leq
	\gamma^{1+|x|^{a^+}}
	\quad \text{for all \quad $x \in\RR^d$}.
\ee
A particular instance of such a set is
\bes
	\omega
	=
	\bigcup_{k\in\ZZ^d} B\Bigl(k,2^{-1-\sqrt{|k|}}\Bigr).
\ees
The prime example of a quadratic differential operator such that the corresponding parabolic
equation exhibits observability from a sensor set as described above is the \emph{harmonic oscillator Hamiltonian}.
This fact extends to semigroups associated to a class of quadratic differential operators
comparable to the harmonic oscillator in a sense that will be made precise below.
A more general class of operators related to so-called \emph{partial harmonic oscillators}, also
defined below, give rise to parabolic equations that are observable from certain anisotropic cousins of
sensor sets $\omega$ as in \eqref{eq:decay-twosided}.
A precise statement is formulated in Theorem~\ref{thm:observability} below, and a wider class of admissible sensor sets can be
found in Corollary~\ref{cor:specIneq:genSets}.

As just mentioned, the fundamental and most basic example for which our observability inequalities hold
is the harmonic oscillator. For this special case, these inequalities are stated and proved in
our short companion paper \cite{DickeSV-21} and we refer readers who just want to grasp our results in the simplest case to this note.
In contrast, those who want to see the full scope of our methods should stick with the paper at hand, where we treat a broad class of models beyond the harmonic oscillator.
Some of the ideas of the proofs in \cite{DickeSV-21} reappear here in a generalized variant,
giving rise to parallels between \cite{DickeSV-21} and Section \ref{sec:spectral-inequality} below.

Our results improve and generalize several recently established criteria, such as \cite{BeauchardJPS-21,MartinPS-20-arxiv,DickeSV-21,Alphonse-20},
for observability and null-control\-lability, especially for semigroup generators comparable to the harmonic oscillator.
While we cannot prove at this point that our sufficiency criteria are also necessary in this case,
bounded sensor sets do not yield a spectral inequality (a certain type of uncertainty relation,
which is a crucial step in proving observability by the established Lebeau-Robbiano method)
see \cite{Miller-08} or \cite[Example~2.5]{DickeSV-21}.

The main body of results in the present paper concern controllability for semigroups generated by a differential operator
comparable to an \emph{anisotropic} Schr\"odinger operator.
Here, the potential of the Schr\"odinger operator is growing unboundedly in some coordinate directions while being
bounded or constant in others.
A prototypical example on $\RR^2$ is the \emph{partial harmonic oscillator}
\be\label{eq:2d-partial-harmonic-oscillator}
	-\frac{\partial^2}{\partial x^2} -\frac{\partial^2}{\partial y^2} + x^2.
\ee
Naturally, the anisotropy of the potential is reflected in the criteria for a sensor set to yield null-controllability.
In the case of \eqref{eq:2d-partial-harmonic-oscillator}, the sensor set is allowed to decay in the $x$-direction in the manner described in \eqref{eq:decay-bound},
while in the $y$-direction decay is not permitted.
In fact, in direction of the $y$-variable the sensor set should be \emph{thick} in the sense of formula \eqref{eq:thick} below.

The control problems we are considering are defined on unbounded domains. This framework has recently attracted considerable attention
due to applications in kinetic theory.
There the geometric domain of the differential equation is typically an unbounded subset of the phase space due to absence of restrictions on the velocity coordinates.
Our results are general enough to cover, among others, certain types of
generalized Ornstein-Uhlenbeck operators, especially those appearing in Kolmogorov and Kramers-Fokker-Planck equations.
It is, however, worth to mention that similar results for non-quadratic operators
such as (anisotropic) Shubin operators and Schr\"odinger operators with
power growth potentials have recently been obtained in \cite{Martin-22,DickeSV-22b}.

We summarize now the new findings and methodological advancements
in the present paper and take this opportunity to explain its structure.
To begin with, in Section~\ref{sec:previous-results} below we recall the well-established Lebeau-Robbiano method, which combines a so-called dissipation estimate
with a suitable uncertainty relation (resp.~spectral inequality) to conclude observability.
Moreover, in that section we review relevant previous results.

Section~\ref{sec:main_results} presents the three major contributions of the present paper (in a somewhat simplified form).
The first one is the dissipation estimate in Theorem~\ref{thm:dissipation} for general quadratic differential operators
exhibiting a singular space with product structure, cf.~Corollary \ref{cor:rotated-dissipation} for our most general result in this respect.
The second is an anisotropic spectral inequality for partial harmonic oscillators formulated in Theorem~\ref{thm:ucp};
Section~\ref{sec:spectral-inequality} contains the proof of a generalized version, Theorem~\ref{thm:gen}.
The last and final one is an observability result from sensor sets with decaying density
or even finite measure, see Theorem~\ref{thm:observability} and its extension Theorem~\ref{thm:observability-sharp-control-costs}.
These generalize the findings of our companion paper \cite{DickeSV-21},
where the first example of observability from finite measure sets for a quadratic operator on $\RR^d$ was exhibited.

The way how partial harmonic oscillators enter the picture becomes clear in Section~\ref{sec:dissipation}.
The singular space of a quadratic differential operator $A$ allows us to associate to it a particular
partial harmonic oscillator $H_\cI$ in such a way that the semigroup generated by $A$ satisfies a
dissipation estimate with respect to the spectral projections of $H_\cI$.
Theorem~\ref{thm:dissipation}
unifies, interpolates, and generalizes the dissipation estimates derived earlier in \cite{BeauchardPS-18,BeauchardJPS-21, Alphonse-20,MartinPS-20-arxiv}.
Section~\ref{sec:dissipation} provides the proof of our dissipation estimate as well as some extensions.

While our proof of the dissipation estimate is based, as in the previously mentioned papers, on anisotropic Gelfand-Shilov smoothing estimates
we provide a considerably streamlined derivation compared to earlier ones.

Applications and extensions of our results are presented in Section~\ref{sec:applications-extensions}:
We show that partial harmonic oscillators allow a more explicit
treatment with quantitative bounds on the control costs.
We also consider Shubin as well as generalized Ornstein-Uhlenbeck operators that fit into our framework;
amongst others, this includes the Kolmogorov and the Kramers-Fokker-Planck equations.

The exposition in this paper aims at accessibility for non-experts and for completeness sake
includes some arguments spelled out in the literature before.
For the same reason we provide an appendix containing a proper definition of partial harmonic oscillators.
Moreover, we there provide dimension reduction arguments based on the tensor structure of these operators.

\subsection*{Acknowledgments}

A.S. is indebted to M.~Egidi for inspiring discussions leading to the proof of Lemma~\ref{lem:Bernstein}.
I.V. would like to thank C. Th\"ale and B.~Gonzales Merino for references to the literature on convex bodies.
A.D. and A.S. have been partially supported by the DFG grant VE 253/10-1 entitled
\emph{Quantitative unique continuation properties of elliptic PDEs with variable 2nd order coefficients and applications in control theory, Anderson localization, and photonics}.
%
%
\section{Model, previous results, and goals}\label{sec:previous-results}

Let $A \colon \cH \supset \cD(A) \to \cH$ be a densely defined, closed operator
on a Hilbert space $\cH$ generating a strongly continuous semigroup
$(\cT(t))_{t\geq 0}$ and let $\cOp \in \mathcal{L}(\cH)$ be a bounded operator.
We consider the abstract Cauchy problem
\be\label{eq:controlled-abstract-Cauchy-problem}
	w'(t)
	=
	A w(t)
	,
	\quad
	v(t) = \cOp w(t)
	,
	\quad
	w(0)
	=
	w_0
	,
\ee
where $t \geq 0$.
The latter is said to be \emph{final-state observable} in time $T > 0$
if there is a constant $C_{\mathrm{obs}} > 0$ such that the observability inequality
\[
	\norm{\cT(T)g}_{\cH}^2
	\leq
	C_{\mathrm{obs}}^2 \int_0^T \norm{\cOp \cT(t)g}_{\cH}^2 \Diff{t}
	\quad \text{holds for all}\quad
	g\in\cH
	.
\]

For $\cH = L^2(\RR^d)$ observability inequalities have been derived for several combinations of operators $A$ and $\cOp$.
The most important one is the case where
$\cOp = \indic_\omega\colon f \mapsto \indic_\omega f$ with a measurable
set $\omega \subset \RR^d$ and where $A$ is some differential operator.
Here the measurable set $\omega \subset \RR^d$ is called \emph{sensor set} and it is a fundamental problem to understand what geometric
conditions on $\omega$ are necessary and/or sufficient for observability of the associated abstract Cauchy problem
\eqref{eq:controlled-abstract-Cauchy-problem}. Given some (sufficient) conditions for the sensor set, we are also interested in the
dependence of the observability constant $C_{\mathrm{obs}}$ on the geometry of $\omega$.

For the Laplacian $A = \Delta$ on $\RR^d$ sharp geometric conditions on $\omega$ were obtained in \cite{EgidiV-18,WangWZZ-19}.
There it is shown that the Cauchy problem of the Laplacian is observable if and only if the sensor set $\omega$ is \emph{thick}, i.e.,
if there are $\gamma, \rho > 0$ such that
\be\label{eq:thick}
	\frac{|\omega \cap B(x,\rho)|}{|B(x,\rho)|}
	\geq \gamma
	\quad\text{for all}\quad x\in\RR^d.
\ee
Associated bounds on the control cost in terms of the parameters $\rho, \gamma$ were
given in \cite{EgidiV-18} as well, and optimized in \cite{NakicTTV-20}.

Condition \eqref{eq:thick} has been shown in \cite{BeauchardJPS-21} to be sufficient also in the case where $A = \Delta - |x|^2$ is the negative harmonic
oscillator on $\RR^d$.
However, thickness of sensor sets is not necessary for this choice of $A$, cf.~\cite{MartinPS-20-arxiv, DickeSV-21}.
In fact, the main result of \cite{DickeSV-21} implies that the Cauchy problem of $A$ is observable for
sensor sets $\omega$ that satisfy the weaker condition \eqref{eq:decay-bound}.

All these results are based on the so-called Lebeau-Robbiano method.
This method combines a \emph{dissipation estimate} with a suitable \emph{spectral inequality}, see \eqref{eq:dissipation} and \eqref{eq:spectral} below,
to derive an observability inequality for the Cauchy problem.
The mentioned spectral inequality is a particular form of a quantitative unique continuation estimate or uncertainty principle for
elements of spectral subspaces of elliptic differential operators.
Here we spell out a variant of the Lebeau-Robbiano method formulated in \cite{BeauchardPS-18};
we also refer the reader to the closely related works \cite{TenenbaumT-11,BeauchardEPS-20,NakicTTV-20,GallaunST-20}.

\begin{theorem}\label{thm:obs_and_control}
	Let $A$ be the generator of a strongly continuous contraction semigroup $(\cT(t))_{t\geq 0}$ on $L^2(\RR^d)$
	and let $\omega \subset \RR^d$
	be a measurable set with positive Lebesgue measure.
	Suppose that there is a family $(P_\lambda)_{\lambda\in [1,\infty)}$
	of orthogonal projections in $L^2(\RR^d)$ such that
	\begin{enumerate}[(i)]
		\item
		there are $d_0, d_1, \gamma_1 > 0$ such that for all $\lambda\geq 1$ and all $g \in L^2(\RR^d)$ we have
		\be\label{eq:spectral}
			\lVert P_\lambda g \rVert_{L^2(\RR^d)}^2
			\leq
			d_0 \euler^{d_1 \lambda^{\gamma_1}}
			\lVert P_\lambda g \rVert_{L^2(\omega)}^2.
		\ee
		\item
		there are $d_2 \geq 1$, $d_3,\gamma_3, t_0 > 0$, and $\gamma_2 > \gamma_1$ such that for all $\lambda\geq 1$, $0 < t < t_0$, and
		all $g \in L^2(\RR^d)$ we have
		\be\label{eq:dissipation}
			\lVert (I-P_\lambda) \cT(t)g \rVert_{L^2(\RR^d)}^2
			\leq
			d_2 \euler^{-d_3 \lambda^{\gamma_2}t^{\gamma_3}}
			\lVert g \rVert_{L^2(\RR^d)}^2.
		\ee
	\end{enumerate}
	Then there is a constant $C > 0$ such that for all $g \in L^2(\RR^d)$ and all $T > 0$ we have the observability estimate
	\bes
		\bigl\lVert \cT(T) g \bigr\rVert_{L^2(\RR^d)}^2
		\leq
		C_{\mathrm{obs}}^2
		\int_0^T  \bigl\lVert \cT(t) g \bigr\rVert_{L^2(\omega)}^2 \Diff{t} ,
	\ees
	with
	\bes
		C_{\mathrm{obs}}^2
		=
		C \exp \left( \frac{C}{T^{\frac{\gamma_1\gamma_3}{\gamma_2-\gamma_1}}}\right)
		.
	\ees
	Here  $C$ depends merely on $d_1,d_2,d_3$, and  $t_0$.
\end{theorem}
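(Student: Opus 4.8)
The statement is an instance of the Lebeau--Robbiano method, so the plan is to carry out the classical telescoping argument while tracking the dependence of the emerging constant on $T$. Two reductions come first. Since $\lambda^{\gamma_1}\ge 1$ for $\lambda\ge 1$, we have $d_0\euler^{d_1\lambda^{\gamma_1}}\le\euler^{d_1'\lambda^{\gamma_1}}$ with $d_1'=d_1+\max\{0,\ln d_0\}$, so after enlarging $d_1$ I may assume $d_0=1$ in \eqref{eq:spectral}; this is why the final constant will involve only $d_1,d_2,d_3,t_0$, the exponents $\gamma_1,\gamma_2,\gamma_3$ being regarded as fixed structural data. Moreover, it suffices to treat $T\le T_*$ for a threshold $T_*\in(0,t_0)$ fixed later: for $T\ge T_*$ contractivity gives $\norm{\cT(T)g}_{L^2(\RR^d)}\le\norm{\cT(T_*)g}_{L^2(\RR^d)}$ and $\int_0^{T_*}\norm{\cT(t)g}_{L^2(\omega)}^2\Diff t\le\int_0^{T}\norm{\cT(t)g}_{L^2(\omega)}^2\Diff t$, so the estimate for $T_*$ transfers to $T$ with a fixed constant, which is bounded by $C\exp\bigl(C/T^{\gamma_1\gamma_3/(\gamma_2-\gamma_1)}\bigr)$ once $C$ is chosen large.

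Next I would establish a one-step inequality. Fix $\lambda\ge 1$ and $0\le a<b$ with $b-a<t_0$. For $t\in(a,b)$ split $\cT(t)g=P_\lambda\cT(t)g+(I-P_\lambda)\cT(t)g$, estimate the projected part with \eqref{eq:spectral} together with $\norm{\cdot}_{L^2(\omega)}\le\norm{\cdot}_{L^2(\RR^d)}$, and estimate $(I-P_\lambda)\cT(t)g=(I-P_\lambda)\cT(t-a)\bigl[\cT(a)g\bigr]$ with \eqref{eq:dissipation} applied to initial datum $\cT(a)g$ and time $t-a\in(0,t_0)$. Restricting $t$ to $\bigl(\tfrac{a+b}{2},b\bigr)$ so that $t-a\ge\tfrac{b-a}{2}$ keeps the dissipation exponent non-degenerate; using $\norm{\cT(b)g}_{L^2(\RR^d)}\le\norm{\cT(t)g}_{L^2(\RR^d)}$ (contractivity) and averaging in $t$ then yields a constant $D\ge 1$ depending only on $d_2$ such that
\[
	\norm{\cT(b)g}_{L^2(\RR^d)}^2
	\le
	\frac{D\,\euler^{d_1\lambda^{\gamma_1}}}{b-a}\int_a^b\norm{\cT(t)g}_{L^2(\omega)}^2\Diff t
	+
	D\,\euler^{d_1\lambda^{\gamma_1}-d_3'\lambda^{\gamma_2}(b-a)^{\gamma_3}}\norm{\cT(a)g}_{L^2(\RR^d)}^2 ,
\]
where $d_3'=d_3/2^{\gamma_3}$.

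I would then iterate this over geometric sequences. Pick $\rho\in(0,1)$ and $R>1$ with $R^{\gamma_2-\gamma_1}\rho^{\gamma_3}>1$ --- possible precisely because $\gamma_2>\gamma_1$ --- and set $s_k:=T\rho^k$ and $\lambda_k:=\lambda_0 R^k$, where $\lambda_0$ is determined by $\lambda_0^{\gamma_2-\gamma_1}=\kappa_0 T^{-\gamma_3}$ with $\kappa_0:=d_1 R^{\gamma_1}/\bigl(d_3'(1-\rho)^{\gamma_3}\bigr)$; one checks this choice forces $d_1\lambda_{k+1}^{\gamma_1}\le d_3'\lambda_k^{\gamma_2}(s_k-s_{k+1})^{\gamma_3}$ for every $k\ge0$. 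Applying the one-step inequality on each interval $(s_{k+1},s_k)$ with scale $\lambda_k$, abbreviating $x_k:=\norm{\cT(s_k)g}_{L^2(\RR^d)}^2$, denoting by $A_k,B_k$ the two coefficients and by $J_k:=\int_{s_{k+1}}^{s_k}\norm{\cT(t)g}_{L^2(\omega)}^2\Diff t$ the corresponding integral, gives $x_k\le A_k J_k+B_k x_{k+1}$. Telescoping and letting the number of steps tend to infinity, the remainder $\bigl(\prod_{j<N}B_j\bigr)x_N$ vanishes because $\prod_j B_j\to0$ (the exponents in the $B_j$ tend to $-\infty$ at geometric rate) while $x_N\le\norm{g}_{L^2(\RR^d)}^2$; since the intervals $(s_{k+1},s_k)$ partition $(0,T)$ up to a null set, $\sum_k J_k=\int_0^T\norm{\cT(t)g}_{L^2(\omega)}^2\Diff t$, and one arrives at the observability estimate with $C_{\mathrm{obs}}^2=\sup_{k\ge0}\bigl(\prod_{j<k}B_j\bigr)A_k$.

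The hard part --- mostly bookkeeping, but the step that pins down the exponent $\gamma_1\gamma_3/(\gamma_2-\gamma_1)$ --- is to estimate this supremum. By the choice of $\lambda_0$, the exponent of $\bigl(\prod_{j<k}B_j\bigr)A_k$ equals $d_1\lambda_0^{\gamma_1}$ at $k=0$, is nonincreasing in $k$, and for $k\ge1$ decreases by an amount of order $R^{k\gamma_2}\rho^{k\gamma_3}$; this exponentially growing gain in the exponent overrides the geometric prefactors $D^{k}/\rho^{k}$, so the supremum is finite and, up to a constant depending only on $d_1,d_2,d_3,t_0$, equals its value at $k=0$, namely $\tfrac{D}{T(1-\rho)}\euler^{d_1\lambda_0^{\gamma_1}}$. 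Since $\lambda_0^{\gamma_1}=\kappa_0^{\gamma_1/(\gamma_2-\gamma_1)}T^{-\gamma_1\gamma_3/(\gamma_2-\gamma_1)}$ and $1/T\le\euler^{M T^{-\gamma_1\gamma_3/(\gamma_2-\gamma_1)}}$ for $T\le1$ with $M$ depending only on $\gamma_1,\gamma_2,\gamma_3$, this gives $C_{\mathrm{obs}}^2\le C\exp\bigl(C\,T^{-\gamma_1\gamma_3/(\gamma_2-\gamma_1)}\bigr)$; fixing finally $T_*=\min\{t_0,\kappa_0^{(\gamma_2-\gamma_1)/\gamma_3},1\}$ ensures $\lambda_0\ge1$ throughout, and collecting constants leaves one $C$ depending merely on $d_1,d_2,d_3,t_0$, as claimed. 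The one point requiring genuine care is the parameter selection: choosing $\rho$ and $R$ (hence the rates $R^{\gamma_1}$ and $R^{\gamma_2}\rho^{\gamma_3}$) and then $\lambda_0$ as a power of $1/T$ so that the spectral losses $\euler^{d_1\lambda_k^{\gamma_1}}$ are absorbed by the accumulated dissipation $\euler^{-d_3'\sum_{j<k}\lambda_j^{\gamma_2}(s_j-s_{j+1})^{\gamma_3}}$ uniformly in $k$, leaving exactly the asserted power of $1/T$ in the exponent.
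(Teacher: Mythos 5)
The paper does not prove Theorem~\ref{thm:obs_and_control} itself --- it cites \cite{BeauchardPS-18} and merely remarks that the proof extends to measurable $\omega$ --- and your proposal is a correct, self-contained rendering of exactly that Lebeau--Robbiano telescoping argument: the one-step inequality (split by $P_\lambda$, average over the outer half of $(a,b)$, use contractivity and dissipation), geometric partitions $s_k=T\rho^k$, geometric scales $\lambda_k=\lambda_0 R^k$ with $R^{\gamma_2-\gamma_1}\rho^{\gamma_3}>1$, and the choice $\lambda_0^{\gamma_2-\gamma_1}\asymp T^{-\gamma_3}$ forcing $d_1\lambda_{k+1}^{\gamma_1}\le d_3'\lambda_k^{\gamma_2}(s_k-s_{k+1})^{\gamma_3}$, which gives the asserted power of $1/T$ in the exponent. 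One small slip: the threshold guaranteeing $\lambda_0\ge 1$ should read $T\le\kappa_0^{1/\gamma_3}$ rather than $\kappa_0^{(\gamma_2-\gamma_1)/\gamma_3}$, since $\lambda_0^{\gamma_2-\gamma_1}=\kappa_0 T^{-\gamma_3}\ge 1\iff T\le\kappa_0^{1/\gamma_3}$; this is harmless as the final constant absorbs it.
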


\begin{remark}
	(a) The statement of Theorem~\ref{thm:obs_and_control} in \cite{BeauchardPS-18} was originally formulated for open $\omega$.
	However, the proof only requires measurable $\omega$ with positive measure, as observed in \cite{EgidiV-18}.
	
	(b) The dissipation estimate \eqref{eq:dissipation} is here only required to hold for small $t$.
	There have been attempts to sharpen the bound in the observability estimate \cite{GallaunST-20},
	cf.~also \cite{NakicTTV-20}, but this requires the dissipation estimate \eqref{eq:dissipation} to hold for all $t\in (0,T/2]$.
	On the other hand, in \cite{BeauchardEPS-20}, Theorem~\ref{thm:obs_and_control} above has been extended to allow $d_2=d_2(t)$
	with a polynomial blow-up as $t\to 0$.

	(c) In many cases the dissipation bound in \eqref{eq:dissipation} has $\gamma_2=1$, e.g. if $P_\lambda$ is a spectral projection of $A$,
	cf.~the discussion below. Hence, in what follows we are particularly interested in spectral inequalities with $\gamma_1 <1$.
\end{remark}

Note that in the hypotheses of the above theorem the sensor set only appears in the spectral inequality \eqref{eq:spectral}.
Hence, this is the only pivot where the geometric assumptions on the sensor set $\omega$ influence the Lebeau-Robbiano method.
For instance, the spectral inequality with $P_\lambda = \indic_{(-\infty,\lambda]}(-\Delta)$
holds if and only if the set $\omega$ is thick, see \cite{Kacnelson-73,LogvinenkoS-74}.

Let us turn now to consider the dissipation estimate:
It is trivial by functional calculus if $A$ is a self-adjoint operator (for instance $A = \Delta$ or $A = \Delta - |x|^2$) and one chooses
$P_\lambda= \indic_{(-\infty,\lambda]}(-A)$ to be a projection onto a suitable  spectral subspace.
However, a spectral inequality for spectral projectors of $A$ might not be available, or, even worse, the operator $A$ might not be self-adjoint.
In that case it is natural to search for a suitable self-adjoint `comparison' operator $H$
and choose the operators $P_\lambda$ as its spectral projections.
Spectral inequalities for these projections then directly determine the possible sensor sets.

To the best of the authors knowledge, this approach was first implemented in \cite{BeauchardPS-18} for certain
(quadratic) differential operators $A$ with the harmonic oscillator $H=-\Delta+|x|^2$ as the (self-adjoint) comparison operator.
For a larger class of (quadratic) differential operators $A$, \cite{Alphonse-20} proved that the (negative of the) Laplacian $H=-\Delta$ is a suitable comparison operator.
However, the result of \cite{Alphonse-20} is strictly weaker for operators $A$ that are, at the same time,
comparable with the harmonic oscillator \emph{and} with the Laplacian:
Indeed, while spectral inequalities for the Laplacian require thick sensors sets, the results of \cite{MartinPS-20-arxiv,DickeSV-21}
show that thickness is not necessary for spectral inequalities for the harmonic oscillator.
In fact, our companion paper exhibited sensor sets that are even allowed to have finite measure, see \cite[Example~2.3]{DickeSV-21}
and also \eqref{eq:decay-twosided} above.

One of the goals of this paper is to propose a new class of comparison operators that interpolate, in some sense, between the negative
of the Laplacian and the harmonic oscillator.
In particular, our results imply that thickness of the sensor set can be relaxed if the comparison operator is not $-\Delta$.
The sensors sets we study in this situation were not covered before, except in the case of
the harmonic oscillator, when they agree with those studied in our companion paper \cite{DickeSV-21}.

In order to formulate our results, we need to introduce the notion of quadratic differential operators.
Let
\be\label{eq:quadratic-symbol}
	q
	\colon
	\RR^d\times\RR^d \to \CC
	,\quad
	q(x,\xi)
	=
	\sum_{\substack{|\alpha+\beta| = 2\\\alpha,\beta\in\NN_0^d}} c_{\alpha,\beta} x^\alpha\xi^\beta
	,\quad
	c_{\alpha,\beta}\in\CC
	,
\ee
be a complex quadratic form.
It is well known, see \cite{NicolaR-10}, that the distribution kernel
\[
	K(x,y)
	=
	(2\pi)^{-d/2} \cF^{-1}\bigl( \RR^d\ni\xi \mapsto q((x+y)/2,\xi)\bigr)(x-y)
\]
defines a continuous operator $q^\w\colon\cS(\RR^{d})\to\cS'(\RR^d)$ by
\[
	\langle q^\w u,v\rangle
	=
	\langle K,v\otimes u\rangle
	\quad\text{for}\quad
	u,v\in\cS(\RR^d)
	.
\]
Here $\cF\colon\cS'(\RR^d)\to\cS'(\RR^d)$ denotes the Fourier transform, $\langle \cdot,\cdot\rangle$ the pairing between $\cS'(\RR^d)$ and $\cS(\RR^d)$,
and $\cS'(\RR^{2d})$ and $\cS(\RR^{2d})$, respectively, and $\otimes$ the tensor product.
Moreover, the thus defined operator $q^\w$ extends to a continuous operator on $\cS'(\RR^d)$, see~\cite[Proposition~1.2.13]{NicolaR-10},
and we may therefore define the operator
\[
	A \colon L^2(\RR^d)\supset \cD(A) \to L^2(\RR^d)
	,
	\quad
	f\mapsto q^\w f
\]
on
\[
	\cD(A)
	=
	\{ f \in L^2(\RR^d) \colon q^\w f \in L^2(\RR^d) \}
	.
\]
We call $A$ the \emph{quadratic differential operator} associated to $q$ and $q$ its \emph{symbol}.

Note that the above construction does not require $q$ to be quadratic but works analogously for more general functions,
in particular \emph{any} polynomial.
However, if $q$ is a quadratic polynomial, the operator $A$ fits nicely into the general framework
of semigroups, which makes it well accessible for control theory:

\begin{proposition}[{see \cite{Hoermander-95}}]\label{prop:m-dissipative}
	Let $q$ be as in \eqref{eq:quadratic-symbol}.
	Then the operator $A$ is closed, densely defined, and agrees with the closure of the restriction of $q^\w$ to the space $\cS(\RR^d)$. If $\Re q \leq 0$,
	then $A$ is m-dissipative and generates a contraction semigroup.
\end{proposition}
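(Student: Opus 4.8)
The plan is to obtain the structural statements directly and then to get $m$-dissipativity from the Lumer--Phillips theorem; the one genuinely technical point will be that $\cS(\RR^d)$ is a core for $A$.

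I would first dispose of closedness and density. Since $q^\w$ is a differential operator with polynomial coefficients, it maps $\cS(\RR^d)$ into itself; as $\cS(\RR^d)$ is dense in $L^2(\RR^d)$ and contained in $\cD(A)$, the operator $A$ is densely defined. For closedness I would argue in $\cS'(\RR^d)$: if $f_n\in\cD(A)$ with $f_n\to f$ and $q^\w f_n\to g$ in $L^2(\RR^d)$, then both convergences hold in $\cS'(\RR^d)$ as well, and since $q^\w$ is continuous on $\cS'(\RR^d)$ (as recalled above) we get $q^\w f=g\in L^2(\RR^d)$, i.e.\ $f\in\cD(A)$ and $Af=g$. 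With this, the identity $A=\overline{q^\w|_{\cS(\RR^d)}}$ is precisely the statement that $\cS(\RR^d)$ is a core for $A$.

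For the core property I would regularize by phase-space cut-offs. Fix $\chi\in C_c^\infty([0,\infty))$ with $\chi\equiv1$ near $0$, set $\chi_\eps(x,\xi)=\chi\bigl(\eps(\abs{x}^2+\abs{\xi}^2)\bigr)$, and let $\chi_\eps^\w$ be the associated Weyl operator. Since $\chi_\eps$ has compact support, $\chi_\eps^\w$ has a Schwartz kernel, hence maps $L^2(\RR^d)$ into $\cS(\RR^d)$; moreover $\chi_\eps^\w\to\Id$ strongly as $\eps\to0$. The decisive input is a commutator bound: in the Weyl--H\"ormander calculus for the Shubin metric $(\abs{dx}^2+\abs{d\xi}^2)/(1+\abs{x}^2+\abs{\xi}^2)$, the symbols $\chi_\eps$ are bounded of order $0$ uniformly in $\eps$ whereas $q$ has order $2$, so the symbol of $[q^\w,\chi_\eps^\w]$ is bounded of order $0$ uniformly in $\eps$ and tends to $0$ pointwise; therefore $[q^\w,\chi_\eps^\w]\to0$ strongly on $L^2(\RR^d)$. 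For $f\in\cD(A)$ this yields $f_\eps:=\chi_\eps^\w f\in\cS(\RR^d)$ with $f_\eps\to f$ and $q^\w f_\eps=\chi_\eps^\w(q^\w f)+[q^\w,\chi_\eps^\w]f\to q^\w f$ in $L^2(\RR^d)$, so $\cS(\RR^d)$ is a core for $A$. Running the same argument with $\bar q$ in place of $q$, and using the standard identity $(q^\w)^*=(\bar q)^\w$ on $\cS(\RR^d)$, I would also obtain $A^*=\overline{(\bar q)^\w|_{\cS(\RR^d)}}$, with $\cS(\RR^d)$ a core for $A^*$.

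Finally, under the assumption $\Re q\le0$ I would establish $m$-dissipativity. Then $-\Re q$ is a nonnegative real quadratic form on $\RR^{2d}$, hence a finite sum $\sum_j\ell_j^2$ of squares of real linear forms $\ell_j$ on $\RR^{2d}$. Each $\ell_j^\w$ is symmetric on $\cS(\RR^d)$, and Weyl quantization sends a product of linear symbols to the symmetrized product of their quantizations, so $(\Re q)^\w=-\sum_j(\ell_j^\w)^2$; consequently $\Re\langle q^\w u,u\rangle=\langle(\Re q)^\w u,u\rangle=-\sum_j\norm{\ell_j^\w u}^2\le0$ for $u\in\cS(\RR^d)$. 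Thus $q^\w|_{\cS(\RR^d)}$ is dissipative, and since $\cS(\RR^d)$ is a core so is $A$; as $\Re\bar q=\Re q\le0$, the same computation makes $A^*$ dissipative. A closed, densely defined, dissipative operator with dissipative adjoint is $m$-dissipative: dissipativity of $A^*$ forces $\ker(\lambda-A^*)=\{0\}$, hence $\Ran(\lambda-A)$ dense for every $\lambda>0$, while dissipativity and closedness of $A$ force $\Ran(\lambda-A)$ closed, so $\lambda-A$ is onto; Lumer--Phillips then gives that $A$ generates a strongly continuous contraction semigroup. The main obstacle throughout is the core property, and inside it the strong convergence $[q^\w,\chi_\eps^\w]\to0$: this is the sole place where a genuine tool is required --- the pseudodifferential calculus above, or, as in H\"ormander's original approach, the explicit symplectic normal forms and the associated generalized Mehler formulas --- everything else being soft functional analysis.
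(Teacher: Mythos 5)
The paper gives no proof of Proposition~\ref{prop:m-dissipative} --- it is cited from H\"ormander's \emph{Symplectic classification of quadratic forms, and general Mehler formulas} --- so I will compare your argument with what H\"ormander actually does there. H\"ormander's route is constructive: he reduces $q$ to a symplectic normal form and writes down the semigroup $\euler^{t q^\w}$ explicitly via a generalized Mehler kernel, from which contraction, closedness of the maximal realization, and the core property for $\cS(\RR^d)$ are extracted. You instead prove the result by soft semigroup theory: closedness from continuity of $q^\w$ on $\cS'$, dissipativity from writing $-\Re q = \sum_j \ell_j^2$ and using $(\ell_j^2)^\w = (\ell_j^\w)^2$, surjectivity of $\lambda - A$ from dissipativity of $A^*$ together with Lumer--Phillips, and the core property via a Shubin-calculus regularization $\chi_\eps^\w$. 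This is a genuinely different and valid route: it never needs the symplectic classification, and it exposes exactly where the structure of $q$ (quadratic, $\Re q \le 0$) is used; what it loses relative to H\"ormander is the explicit form of the semigroup (the Mehler formula), which the paper in fact uses elsewhere.

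Two steps are stated more briskly than they deserve and you should be explicit about how you would close them. First, the inference ``the commutator symbol is uniformly of order $0$ and tends to $0$ pointwise, therefore $[q^\w,\chi_\eps^\w]\to 0$ strongly'' is not an implication that holds for arbitrary symbol families; you need to combine the uniform Calder\'on--Vaillancourt bound (giving uniform boundedness of the family) with the fact that the commutator symbol is supported in the shell $\{ |x|^2 + |\xi|^2 \sim \eps^{-1} \}$, so that, e.g.\ writing $u = (1+H)^{-N}w$ for $u \in \cS(\RR^d)$ with $H=-\Delta+|x|^2$, the composed symbol is $O(\eps^N)$ in $S^0$ and hence $[q^\w,\chi_\eps^\w]u\to 0$; strong convergence on $L^2$ then follows by density. (It is also worth noting, to keep this clean, that for $q$ quadratic the Moyal expansion terminates, so the Weyl symbol of the commutator is exactly $\tfrac{1}{\ii}\{q,\chi_\eps\}$ with no remainder to control.) Second, ``using the standard identity $(q^\w)^* = (\bar q)^\w$ on $\cS$, I would also obtain $A^* = \overline{(\bar q)^\w|_\cS}$'' hides the actual argument: you always have $A^* = (q^\w|_\cS)^* = A_{\max}(\bar q)$ (the maximal realization of $(\bar q)^\w$, by testing against $\cS$ and reading off the distributional identity), and it is only \emph{after} running your core argument again for $\bar q$ that $A_{\max}(\bar q) = \overline{(\bar q)^\w|_\cS}$, which is what you need to deduce dissipativity of $A^*$. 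With those two steps spelled out, the proof is complete and self-contained.
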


Throughout the rest of this work, we will assume that $q$ is of the form \eqref{eq:quadratic-symbol} satisfying $\Re q \leq 0$.
We denote by $A$ the corresponding quadratic differential operator and by $(\cT(t))_{t\geq 0}$ the semigroup generated by $A$.

A particular example of a quadratic differential operator is the negative of a \emph{partial harmonic oscillator}.

\begin{definition}
	Let $\cI \subset \{1,\dots,d \}$, and let $A$ be the (self-adjoint) quadratic differential  operator with symbol
	\bes
		q(x,\xi) = q_\cI(x,\xi) = -|\xi|^2 - |x_\cI|^2,
		\quad
		|x_\cI|^2 = \sum_{j\in\cI}x_j^2.
	\ees
We call $H_\cI:= -A$ a partial harmonic oscillator.
\end{definition}

The latter can alternatively also be introduced via quadratic forms, see Appendix~\ref{sec:partharmOsc}, which leads to
the same operator since both agree on Schwartz functions; cf.~Corollary~\ref{cor:specH}.

Particular cases of partial harmonic oscillators are the negative of the Laplacian (with $\cI = \emptyset$) and the usual harmonic oscillator (with $\cI = \{1,\dots,d\}$).
These two are prominent prototypes for certain classes of quadratic differential operators.
In order to characterize these classes, we introduce the \emph{Hamilton map} associated to the quadratic form $q$
defined by
\[
	F
	=
	\frac{1}{2}
	\begin{pmatrix}
		(\partial_{\xi_j}\partial_{x_k} q(x,\xi))_{j,k=1}^d
		&
		(\partial_{\xi_j}\partial_{\xi_k} q(x,\xi))_{j,k=1}^d \\
		-(\partial_{x_j}\partial_{x_k} q(x,\xi))_{j,k=1}^d
		&
		-(\partial_{x_j}\partial_{\xi_k} q(x,\xi))_{j,k=1}^d
	\end{pmatrix}
	.
\]
Note that $F$ is a constant matrix since $q$ is a quadratic polynomial.
Associated to the Hamilton map is the so-called \emph{singular space} of the quadratic form $q$, or the operator $A$.
This was introduced in \cite{HitrikPS-09} as
\[
	S = S(A) = S(q)
	=
	\Biggl( \bigcap_{j = 0}^{2d-1} \ker\bigl[ \Re F (\Im F)^j\bigr] \Biggr)\cap \RR^{2d}
	,
\]
where $\Re F$ and $\Im F$ are taken entrywise.\footnote{Formally, $q$ and $-q$ generate the same singular space. For this reason (and notational simplicity) we set $S(-A)=S(A)$.}
We denote by $k_0 \in \{0,\dots 2d-1\}$ the smallest number such that
\be\label{eq:k_0}
	S
	=
	\Biggl( \bigcap_{j = 0}^{k_0} \ker\bigl[ \Re F (\Im F)^j\bigr] \Biggr)\cap \RR^{2d}
	.
\ee
For the purpose of this paper, we call $k_0$ the \emph{rotation exponent} of $q$ (resp.~$A$).
(It resembles somewhat the degree of non-holonomy in sub-Riemannian geometry.)

It turns out that it is reasonable to classify quadratic differential operators by the form of their singular space.
For instance, \cite{BeauchardPS-18} shows that
all quadratic differential operators $A$ with $S(A) = S(\Delta-|x|^2)$ satisfy a dissipation
estimate with respect to projections onto spectral subspaces of the harmonic oscillator.
Note that a simple calculation verifies $S(\Delta-|x|^2) = \{ 0 \}$.

\begin{proposition}[{\cite[Proposition~4.1]{BeauchardPS-18}}]\label{prop:dissipation-hermite}
	Let $S(A) = \{0\}$ and let $k_0$ be the rotation exponent of $A$.
	Then there are $c_0, t_0 > 0$ such that
	\be\label{eq:dissipation-hermite}
		\norm{(1-P_\lambda)\cT(t)g}_{L^2(\RR^d)}
		\leq
		c_0 \euler^{-c_0t^{2k_0+1}\lambda}\norm{g}_{L^2(\RR^d)}
	\ee
	for all $0 < t < t_0$, $\lambda\geq 1$, and $g\in L^2(\RR^d)$ where
	\[
		P_\lambda
		=
		\indic_{(-\infty,\lambda]} (-\Delta+|x|^2)
	\]
	is the projection onto the spectral subspace of the harmonic oscillator associated to the interval
	$(-\infty,\lambda]$.
\end{proposition}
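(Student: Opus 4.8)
The plan is to reduce \eqref{eq:dissipation-hermite} to a single analytic ingredient — a \emph{Gelfand--Shilov type smoothing estimate} for the semigroup $(\cT(t))_{t\ge0}$ — and to deduce the dissipation bound from it by elementary functional calculus for $H:=-\Delta+|x|^2$. The input I would establish (or, rather, invoke, since it is essentially known for quadratic semigroups with trivial singular space) is the following: there are $C\ge1$ and $t_0>0$ such that, setting $\eps(t):=t^{2k_0+1}/C$, the operator $\euler^{\eps(t)H}\cT(t)$ is bounded on $L^2(\RR^d)$ with norm at most $C$ for all $0<t\le t_0$; equivalently,
\[
	\bigl\lVert H^N\cT(t)\bigr\rVert_{L^2(\RR^d)\to L^2(\RR^d)}
	\le C^{N+1}\,N!\;t^{-(2k_0+1)N}
	\qquad\text{for all }N\in\NN_0,\ 0<t\le t_0 .
\]
In other words, for $t>0$ the semigroup sends $L^2(\RR^d)$ into the analytic vectors of $H$ at a scale comparable to $t^{2k_0+1}$, the power $t^{2k_0+1}$ being governed by the rotation exponent $k_0$. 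This smoothing is what is proved in \cite[Section~4]{BeauchardPS-18}, and the streamlined argument of Section~\ref{sec:dissipation} of the present paper also produces it.

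Granting this, the rest is soft. First I would use that the spectrum of $H$ restricted to $\Ran(1-P_\lambda)$ lies in $(\lambda,\infty)$, so that functional calculus yields $\lVert(1-P_\lambda)\euler^{-\eps(t)H}\rVert\le\euler^{-\eps(t)\lambda}$. Then, for $g\in L^2(\RR^d)$ the smoothing estimate gives $\cT(t)g\in\cD(\euler^{\eps(t)H})$, hence $\cT(t)g=\euler^{-\eps(t)H}\bigl(\euler^{\eps(t)H}\cT(t)g\bigr)$ and
\[
	\bigl\lVert(1-P_\lambda)\cT(t)g\bigr\rVert_{L^2(\RR^d)}
	\le\bigl\lVert(1-P_\lambda)\euler^{-\eps(t)H}\bigr\rVert\,\bigl\lVert\euler^{\eps(t)H}\cT(t)g\bigr\rVert_{L^2(\RR^d)}
	\le C\,\euler^{-\eps(t)\lambda}\,\lVert g\rVert_{L^2(\RR^d)}
	=C\,\euler^{-t^{2k_0+1}\lambda/C}\,\lVert g\rVert_{L^2(\RR^d)}
\]
for all $0<t\le t_0$ and $\lambda\ge1$, which is a bound of the form \eqref{eq:dissipation-hermite}; the $t_0$ in \eqref{eq:dissipation-hermite} is taken to be the one from the smoothing estimate. (Alternatively one can argue through the $H^N$ form directly: $\lVert(1-P_\lambda)\cT(t)g\rVert\le\lambda^{-N}\lVert H^N\cT(t)g\rVert$, followed by optimisation over $N\in\NN_0$ via Stirling's formula, yielding the same exponential rate.)

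The main obstacle is, as expected, the smoothing estimate itself, and in particular pinning down the exact power $t^{2k_0+1}$. What must be shown is that the hypoelliptic regularisation accumulated by $\cT(t)$ up to time $t$ is of size $t^{2k_0+1}$ in each of the $2N$ factors of $x$ and $D_x$ that build up $H^N$ — this is precisely where the hypothesis $S(A)=\{0\}$ and the minimality \eqref{eq:k_0} of $k_0$ are used. I would follow one of two standard routes: work on the operator side with iterated commutators of $(\Re q)^\w$ along the flow generated by $(\Im q)^\w$, so that $2k_0+1$ is exactly the number of brackets needed for all directions to be ``seen'' by virtue of \eqref{eq:k_0}; or exploit the explicit complex Gaussian / Mehler-type representation of the kernel of $\cT(t)$ together with a quadratic lower bound on the associated weight. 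Everything downstream of this is the soft functional-calculus step above.
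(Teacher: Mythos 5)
Your proposal matches the strategy used both in the original reference \cite{BeauchardPS-18} and in the paper's own generalization in Theorem~\ref{thm:general-dissipation}: invoke a Gelfand--Shilov smoothing estimate of the form $\norm{\euler^{ct^{2k_0+1}H}\cT(t)g}\le C\norm{g}$ for small $t$ (Proposition~\ref{prop:smoothing-harmonic-oscillator}, taken from \cite{HitrikPSV-18}), then factor $\cT(t)=\euler^{-ct^{2k_0+1}H}\,\euler^{ct^{2k_0+1}H}\cT(t)$ and bound $\norm{(1-P_\lambda)\euler^{-ct^{2k_0+1}H}}\le\euler^{-ct^{2k_0+1}\lambda}$ by the spectral theorem. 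Your functional-calculus reduction is precisely the proof of Theorem~\ref{thm:general-dissipation}, and your identification of the smoothing estimate as the real analytic content, with the exponent $t^{2k_0+1}$ pinned down by the minimality of $k_0$ in \eqref{eq:k_0}, is accurate.
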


A similar result is also available for the Laplacian:
In \cite[Remark~2.9]{AlphonseB-21} the authors state that the technique developed in \cite[Section~4.2]{Alphonse-20}
implies that all quadratic differential operators $A$ with $S(A) \subset S(\Delta)$ satisfy a dissipation estimate similar to \eqref{eq:dissipation-hermite},
but with $P_\lambda$ a projection onto a spectral subspace of the Laplacian;
note that $S(\Delta) = \RR^d\times\{0\}$.
This approach yields the following result which is, however, not formulated in the last mentioned references explicitly.

\begin{proposition}[{see \cite[Section~4.2]{Alphonse-20}, \cite[Remark~2.9]{AlphonseB-21}}]\label{prop:dissipation-general-S-Laplace}
	Assume $S(A) = U\times\{0\}$ for some subspace $U\subset\RR^d$ and let $k_0$ be the rotation exponent of $A$.
	Then there are $c_0, t_0 > 0$ such that
	\[
		\norm{(1-P_\lambda)\cT(t)g}_{L^2(\RR^d)}
		\leq
		c_0 \euler^{-c_0t^{2k_0+1}\lambda}\norm{g}_{L^2(\RR^d)}
	\]
	for all $0 < t < t_0$, $\lambda\geq 1$, and $g\in L^2(\RR^d)$ where
	\[
		P_\lambda
		=
		\indic_{(-\infty,\lambda]}(-\Delta)
	\]
	is the projection onto the spectral subspace of the negative of the Laplacian associated to the interval $(-\infty,\lambda]$.
\end{proposition}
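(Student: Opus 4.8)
The plan is to follow the route of \cite[Section~4.2]{Alphonse-20}: I would derive the dissipation estimate as an elementary consequence of a quantitative smoothing property of the semigroup in the \emph{frequency} variable, the point of the hypothesis $S(A)=U\times\{0\}$ being precisely that $\cT(t)$ regularises in \emph{all} frequency directions. The analytic heart of the matter, which I would take from \cite[Section~4.2]{Alphonse-20} (see also \cite[Remark~2.9]{AlphonseB-21}), is the following: there are $c>0$, $C\geq 1$, and $t_0\in(0,1]$ such that, for every $0<t\leq t_0$, the operator $\euler^{ct^{2k_0+1}(-\Delta)}\cT(t)$ is bounded on $L^2(\RR^d)$ with
\[
	\norm{\euler^{ct^{2k_0+1}(-\Delta)}\cT(t)g}_{L^2(\RR^d)}\leq C\,\norm{g}_{L^2(\RR^d)}\quad\text{for all}\quad g\in L^2(\RR^d)
\]
(equivalently, up to constants, $\norm{(-\Delta)^{m}\cT(t)g}_{L^2(\RR^d)}\leq(C'/t^{2k_0+1})^{m}\,m!\,\norm{g}_{L^2(\RR^d)}$ for every $m\in\NN_0$, that is, $\cT(t)g$ is an analytic vector of $-\Delta$ with radius of analyticity $\gtrsim t^{2k_0+1}$).

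Granting this, the conclusion is routine spectral theory for the self-adjoint operator $-\Delta$: since $1-P_\lambda=\indic_{(\lambda,\infty)}(-\Delta)$ and, on the spectrum $[0,\infty)$ of $-\Delta$, one has the pointwise bound $\indic_{(\lambda,\infty)}(\mu)\leq\euler^{ct^{2k_0+1}(\mu-\lambda)}$, the functional calculus gives
\[
	\norm{(1-P_\lambda)\cT(t)g}_{L^2(\RR^d)}\leq\euler^{-ct^{2k_0+1}\lambda}\,\norm{\euler^{ct^{2k_0+1}(-\Delta)}\cT(t)g}_{L^2(\RR^d)}\leq C\,\euler^{-ct^{2k_0+1}\lambda}\,\norm{g}_{L^2(\RR^d)}
\]
for all $0<t\leq t_0$, $\lambda\geq 1$, and $g\in L^2(\RR^d)$, which is the claimed dissipation estimate.

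It remains to explain how I would establish the smoothing bound; this is where essentially all of the difficulty lies. Writing the semigroup as $\cT(t)=\euler^{tA}$, one invokes the Mehler-type representation of $\cT(t)$: for small $t>0$ the operator $\cT(t)$ has, up to a normalising constant, a Gaussian Weyl symbol $\euler^{-Q_t}$ with a complex quadratic form $Q_t$ determined by the Hamilton map $F$ of $q$. The decisive quantitative input --- due to \cite{HitrikPS-09} and refined in \cite{Alphonse-20} --- is a coercivity estimate for $\Re Q_t$ in the phase-space directions transverse to the singular space: $\Re Q_t$ is bounded below by a constant times $t^{2k_0+1}$ times the squared distance to $S(A)$, the exponent $2k_0+1$ encoding the rotation exponent $k_0$. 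Under the present hypothesis $S(A)=U\times\{0\}\subseteq\RR^d\times\{0\}=S(\Delta)$ the transverse directions contain the entire frequency space, whence $\Re Q_t(x,\xi)\gtrsim t^{2k_0+1}|\xi|^2$. Feeding this into the Weyl--H\"ormander symbol calculus --- exactly as in the derivation of the anisotropic Gelfand--Shilov regularity of $\cT(t)\bigl(L^2(\RR^d)\bigr)$ in \cite[Section~4.2]{Alphonse-20} --- yields derivative bounds of the form $\norm{\partial_x^{\beta}\cT(t)g}_{L^2(\RR^d)}\leq C^{|\beta|+1}t^{-(2k_0+1)|\beta|/2}(\beta!)^{1/2}\norm{g}_{L^2(\RR^d)}$ for $0<t\leq t_0$, and the smoothing bound then follows by expanding $(-\Delta)^m$, using $(2\beta)!\leq 4^{|\beta|}(\beta!)^2$ together with the elementary bound on the number of multi-indices of length $m$. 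This is the Laplacian analogue of the argument behind Proposition~\ref{prop:dissipation-hermite}: there $S(A)=\{0\}$ forces coercivity of $\Re Q_t$ in all $2d$ phase-space directions, hence regularisation in both $x$ and $\xi$ and comparison with $-\Delta+|x|^2$, while here only the $d$ frequency directions are controlled and $-\Delta$ is the correct comparison operator. The genuine obstacle is thus this coercivity estimate for $\Re Q_t$ with the sharp exponent $t^{2k_0+1}$; since it is already established in the references just cited, I would invoke it rather than reprove it, and everything downstream --- the symbol-calculus bookkeeping and the final functional-calculus step --- is routine.
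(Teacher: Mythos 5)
Your route — derivative bounds in the frequency directions of the form $\norm{\partial_x^\beta\cT(t)g}\lesssim C^{|\beta|}t^{-(2k_0+1)|\beta|/2}(\beta!)^{1/2}\norm{g}$ (taken from the cited references), summed into the exponential smoothing bound $\norm{\euler^{ct^{2k_0+1}(-\Delta)}\cT(t)g}\lesssim\norm{g}$ via the expansion of $(-\Delta)^m$ and $(2\beta)!\leq 4^{|\beta|}(\beta!)^2$, followed by the spectral theorem to peel off the factor $\euler^{-ct^{2k_0+1}\lambda}$ — is exactly the chain the paper uses for its generalized dissipation estimate in Section~\ref{sec:dissipation} (Theorem~\ref{thm:general-smoothing}, Lemma~\ref{lem:technical-lemma-dissipation}, Theorems~\ref{thm:L2-bound} and~\ref{thm:general-dissipation}), specialized to $\cI=\emptyset$, $\cJ=\{1,\dots,d\}$. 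The proposal is correct and essentially the same as the paper's approach.
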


As already mentioned above, the choice of the comparison operators determines the geometric
assumptions required for sensor sets.
The following result formulates a spectral inequality for spectral projectors of the harmonic oscillator
and thus complements the dissipation estimate in Proposition~\ref{prop:dissipation-hermite}.
Here we write $\Lambda_L(x) = x + (-L/2,L/2)^d$ for the cube of sidelength $L>0$ centered at $x\in\RR^d$.

\begin{proposition}[{\cite[Theorem~2.1]{DickeSV-21}}]\label{prop:ucp-hermite}
	There is a universal constant $K \geq 1$ such that for every
	$a \in [0,1)$, $L> 0$, $\gamma \in (0,1)$, $\lambda\in [1,\infty)$ and
	every measurable $\omega \subset \RR^d$ satisfying
	\bes
		\frac{\abs{\omega \cap \Lambda_L(m)}}{\abs{\Lambda_L(m)}}
		\geq
		\gamma^{1+\abs{m}^a}
		\quad\text{for all}\
		m \in (L\ZZ)^d
	\ees
	we have
	\bes
		\norm{f}_{L^2(\omega)}^2
		\geq
		3\Bigl( \frac{\gamma}{K^d} \Bigr)^{K d^{5/2+a} (1+L)^2\lambda^{(1+a)/2}}
		\norm{f}_{L^2(\RR^d)}^2
		\quad \text{for all}\quad f \in \Ran P_\lambda.
	\ees
	Here $P_\lambda$ is as in Proposition~\ref{prop:dissipation-hermite}.
\end{proposition}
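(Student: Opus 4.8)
The plan is to reduce the global inequality to a local one on each lattice cube $\Lambda_L(m)$, $m\in (L\ZZ)^d$, by exploiting two features of the spectral subspace $\Ran P_\lambda$ of the harmonic oscillator: a Bernstein-type derivative bound, which makes elements of $\Ran P_\lambda$ behave locally like low-degree polynomials at the scale $\lambda^{-1/2}$, and a concentration bound, which confines their $L^2$-mass to the ball $B(0,R)$ with $R\asymp\sqrt\lambda$. One then applies a Kovrijkine/Logvinenko--Sereda type estimate on the finitely many cubes meeting $B(0,R)$ and sums.

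First I would establish the Bernstein inequality (this is the content of Lemma~\ref{lem:Bernstein}): for every $f\in\Ran P_\lambda$ and every $\alpha\in\NN_0^d$,
\[
	\norm{\partial^\alpha f}_{L^2(\RR^d)}
	\leq
	(Cd^{1/2}\lambda^{1/2})^{\abs{\alpha}}\norm{f}_{L^2(\RR^d)},
\]
and the analogous bound with $\partial^\alpha$ replaced by multiplication by $x^\alpha$. Both follow from the operator inequalities $-\Delta\leq -\Delta+\abs{x}^2$ and $\abs{x}^2\leq-\Delta+\abs{x}^2$ together with the functional calculus bound $\langle(-\Delta+\abs{x}^2)f,f\rangle\leq\lambda\norm{f}^2$ on $\Ran P_\lambda$; the passage to higher $\alpha$ uses the commutator (creation/annihilation) structure of the harmonic oscillator, keeping track of the combinatorial factors in $d$. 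In particular $f$ extends to an entire function with growth governed by the analyticity scale $\rho\asymp d^{-1/2}\lambda^{-1/2}$. The concentration bound is immediate from the same operator inequality: $\int_{\abs{x}>R}\abs{f}^2\,dx\leq R^{-2}\langle\abs{x}^2 f,f\rangle\leq (\lambda/R^2)\norm{f}^2$, so fixing $R=\sqrt{3\lambda}$ gives $\int_{\abs{x}>R}\abs{f}^2\,dx\leq\tfrac13\norm{f}^2_{L^2(\RR^d)}$.

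Second, I would invoke a local estimate of Kovrijkine type for functions with this Bernstein property: on a single cube $\Lambda_L(m)$ and a measurable $E\subset\Lambda_L(m)$ with $\abs{E}\geq\kappa\abs{\Lambda_L(m)}$,
\[
	\norm{f}_{L^2(\Lambda_L(m))}^2
	\leq
	\Bigl(\frac{C_d}{\kappa}\Bigr)^{C_d(1+L\lambda^{1/2})}\norm{f}_{L^2(E)}^2,
\]
obtained by subdividing $\Lambda_L(m)$ into $\sim(1+L\lambda^{1/2})^d$ subcubes of side $\asymp\rho$, treating $f$ on each subcube as a bounded-degree polynomial via the Bernstein bound, applying a Remez--Turán inequality, and chaining smallness across adjacent subcubes. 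Applying this with $E=\omega\cap\Lambda_L(m)$ and $\kappa=\gamma^{1+\abs{m}^a}$, then summing over the $\sim(1+R/L)^d$ cubes meeting $B(0,R)$, bounding every prefactor by its worst value $(C_d\gamma^{-(1+R^a)})^{C_d(1+L\lambda^{1/2})}$, and using $R^a\lesssim\lambda^{a/2}$ together with $(1+L\lambda^{1/2})(1+\lambda^{a/2})\lesssim(1+L)^2\lambda^{(1+a)/2}$, we find
\[
	\tfrac23\norm{f}_{L^2(\RR^d)}^2
	\leq
	\Bigl(\frac{K^d}{\gamma}\Bigr)^{Kd^{5/2+a}(1+L)^2\lambda^{(1+a)/2}}\norm{f}_{L^2(\omega)}^2,
\]
after collecting the powers of $d$ and $L$; rearranging and absorbing the factor $\tfrac32$ into the universal constant $K$ gives the claimed lower bound with the factor $3$.

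The main obstacle will be the second step: proving the local Kovrijkine-type estimate with the correct dependence on the analyticity scale $\rho\asymp\lambda^{-1/2}$ and with dimensional constants growing only polynomially in $d$. This is what produces the exponent $L\lambda^{1/2}$ per cube — and hence, after multiplication by the worst density exponent $1+\lambda^{a/2}$, the overall order $\lambda^{(1+a)/2}$ — and it requires a careful Remez argument at the subcube scale together with a chaining argument controlling the propagation of smallness. Tracking the precise combinatorial factors that yield $d^{5/2+a}$ and $(1+L)^2$ in the final exponent is then bookkeeping distributed over the Bernstein inequality, the covering of $B(0,R)$, and the chaining.
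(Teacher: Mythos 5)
Your high-level plan (Bernstein bound + confinement of mass to a ball of radius $\asymp\sqrt\lambda$ + per-cube Kovrijkine estimate + summation over the finitely many relevant cubes) is the same strategy that the paper follows in the proof of Theorem~\ref{thm:gen} (which specializes to Proposition~\ref{prop:ucp-hermite} when $\cI=\{1,\dots,d\}$). However, there is a genuine gap in your second step.

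The obstruction is that the local Kovrijkine-type estimate you want on a single cube $\Lambda_L(m)$,
\[
	\norm{f}_{L^2(\Lambda_L(m))}^2
	\leq
	\Bigl(\frac{C_d}{\kappa}\Bigr)^{C_d(1+L\lambda^{1/2})}\norm{f}_{L^2(E)}^2,
\]
cannot be derived from the global Bernstein inequality alone. Any Kovrijkine/Remez--Tur\'an argument at the subcube scale needs to control the normalized supremum
\[
	M_m
	\;=\;
	\frac{\sqrt{\abs{\Lambda_L(m)}}}{\norm{f}_{L^2(\Lambda_L(m))}}
	\,
	\sup_{z\in\Lambda_L(m)+D}\abs{f(z)},
\]
i.e.\ the supremum of the analytic extension \emph{normalized by the local $L^2$-norm on that very cube}, because the Remez/chaining step propagates smallness relative to the mass $f$ actually carries there (this is exactly the quantity $M_k$ in Lemma~\ref{lem:localEstimate}). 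The global Bernstein bound $\norm{\partial^\alpha f}_{L^2(\RR^d)}\lesssim(C\sqrt\lambda)^{\abs\alpha}\norm{f}_{L^2(\RR^d)}$ only controls $\sup\abs f$ in terms of $\norm{f}_{L^2(\RR^d)}$, so on a cube where $\norm{f}_{L^2(\Lambda_L(m))}$ happens to be tiny compared to $\norm{f}_{L^2(\RR^d)}$, the quantity $M_m$ can be arbitrarily large and your per-cube estimate degenerates. As written, your argument implicitly applies the Bernstein bound locally on every cube, which is not justified.

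The device that closes this gap (present in the paper and in \cite{Kovrijkine-01,Kovrijkine-thesis}, and used in the paper as the good/bad cubes decomposition around \eqref{eq:bad-mass}) is: call a cube \emph{good} if the Bernstein inequality holds locally on it with an extra factor $2^{m+1}\kappa$, and \emph{bad} otherwise; one proves via the global Bernstein bound that bad cubes together carry at most half of $\norm{f}_{L^2(\RR^d)}^2$, so good cubes inside the confinement ball still carry a uniform fraction of the mass (Lemma~\ref{lem:reducCovering}); and on good cubes, $M_m$ is controlled by a Taylor expansion around a well-chosen point $x_m\in\Lambda_L(m)$ satisfying \eqref{eq:pointwiseLocalBernstein}. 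Without this, the passage from the global Bernstein inequality to the per-cube Kovrijkine estimate is unjustified. Aside from this, a minor difference from the paper is that your confinement step uses only the Chebyshev bound $\int_{\abs{x}>R}\abs f^2\leq(\lambda/R^2)\norm f^2$, whereas the paper uses the sharper exponential decay of Hermite functions (Lemma~\ref{lem:decay}); the Chebyshev version would still work, but the mass budget then has to be recomputed so that the bad cubes and the cubes outside the ball together do not swallow all of $\norm f^2$.
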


On the other hand, in order to complement the dissipation estimate from Proposition~\ref{prop:dissipation-general-S-Laplace}
and obtain a spectral inequality for spectral projections of the Laplacian, we rely
on very precise uncertainty relations established in the seminal works \cite{Kovrijkine-01,Kovrijkine-thesis}.
While they were formulated by Kovrijkine in a Fourier analytic setting, it was observed in \cite{EgidiV-18,EgidiV-20}
that they translate to spectral inequalities for the Laplacian and have applications in control theory.
Analogous results for the Laplacian on finite cubes with periodic, Dirichlet or Neumann boundary conditions,
were obtained in \cite{EgidiV-18, EgidiV-20}, while \cite{EgidiS-21} established a more general spectral inequality
covering both bounded and unbounded domains.
We use here the specific formulation from \cite[Corollary~1.5]{EgidiS-21}:

\begin{proposition}[Kovrijkine's Inequality]\label{prop:ucp-laplace}
	Let $\omega \subset \RR^d$ be measurable satisfying
	\bes
		\frac{\abs{\omega \cap \Lambda_L(m)}}{\abs{\Lambda_L(m)}}
		\geq
		\gamma
		\quad\text{for all}\quad
		m \in (L\ZZ)^d
	\ees
	with some fixed $L > 0$ and $\gamma \in (0,1)$.
	
	Then, there is a universal constant $K \geq 1$ such that for every  $\lambda\in [1,\infty)$ and all
	$f \in \Ran P_\lambda$, where $P_\lambda$ is as in Proposition~\ref{prop:dissipation-general-S-Laplace}, we have
	\bes
		\norm{f}_{L^2(\omega)}^2
		\geq
		\Bigl( \frac{\gamma}{K^d} \Bigr)^{K d L \lambda^{1/2} + 2 d + 6}
		\norm{f}_{L^2(\RR^d)}^2		
		.
	\ees
\end{proposition}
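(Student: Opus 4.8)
I would obtain this either by quoting \cite[Corollary~1.5]{EgidiS-21} directly, or---to keep the paper self-contained---by running the analytic argument of Kovrijkine \cite{Kovrijkine-01,Kovrijkine-thesis}, which I now sketch. Since $-\Delta$ has Fourier multiplier $\abs{\xi}^2$, a function $f \in \Ran P_\lambda$ is precisely an $L^2$ function whose Fourier transform $\cF f$ is supported in the closed ball $B(0,\sqrt{\lambda})$, so by Paley--Wiener it extends to an entire function on $\CC^d$ of exponential type $\sqrt{\lambda}$. Set $\Lambda := \sqrt{\lambda} \geq 1$ and decompose $\RR^d = \bigcup_{m \in (L\ZZ)^d} \Lambda_L(m)$ into the cubes of the thickness hypothesis. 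The plan has three steps: first, single out a family $G$ of ``good'' cubes carrying at least half of $\norm{f}_{L^2(\RR^d)}^2$ and on which all derivatives of $f$ are controlled; second, on each good cube convert $\abs{\omega \cap \Lambda_L(m)} \geq \gamma\abs{\Lambda_L(m)}$ into an $L^2$ lower bound with a constant of the form $(\gamma/C)^{C(\Lambda L+1)}$; third, sum over $G$.

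\emph{Step 1: good cubes via Bernstein.} The quantitative input is the Bernstein inequality $\norm{\partial^\alpha f}_{L^2(\RR^d)} \leq \Lambda^{\abs{\alpha}}\norm{f}_{L^2(\RR^d)}$ for all $\alpha \in \NN_0^d$, which is immediate from the fact that $\cF f$ is supported in $B(0,\Lambda)$. Summing over the partition gives, for each fixed $\alpha$, $\sum_m \norm{\partial^\alpha f}_{L^2(\Lambda_L(m))}^2 \leq \Lambda^{2\abs{\alpha}}\sum_m \norm{f}_{L^2(\Lambda_L(m))}^2$, so by Chebyshev the cubes on which $\norm{\partial^\alpha f}_{L^2(\Lambda_L(m))}^2 > K_\alpha\Lambda^{2\abs{\alpha}}\norm{f}_{L^2(\Lambda_L(m))}^2$ carry at most $K_\alpha^{-1}\norm{f}_{L^2(\RR^d)}^2$ of the total mass. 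I would choose $K_\alpha$ equal to $2^{\abs{\alpha}}$ times the number of multi-indices of length $\abs{\alpha}$, so that $\sum_\alpha K_\alpha^{-1} \leq 1/2$; then the union $B$ of all resulting bad cubes carries at most half the mass, and $G := (L\ZZ)^d \setminus B$ carries at least half. For $m \in G$ one has $\norm{\partial^\alpha f}_{L^2(\Lambda_L(m))} \leq (C_d\Lambda)^{\abs{\alpha}}\norm{f}_{L^2(\Lambda_L(m))}$ for all $\alpha$; by Sobolev embedding on the cube this says that, after rescaling $\Lambda_L(m)$ to the unit cube, $f$ is real-analytic with geometrically decaying Taylor coefficients of radius $\sim (C_d\Lambda L)^{-1}$, hence coincides on $\Lambda_L(m)$, up to arbitrarily small multiplicative error, with a polynomial of degree $N \sim \Lambda L$.

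\emph{Step 2: local estimate; Step 3: summation.} Fix $Q = \Lambda_L(m)$ with $m \in G$. Since $\abs{\omega\cap Q} \geq \gamma\abs{Q}$, a Remez-type inequality for (near-)polynomials of degree $N \sim \Lambda L$ yields
\[
	\norm{f}_{L^2(\omega\cap Q)}^2 \geq \Bigl(\tfrac{\gamma}{C}\Bigr)^{C d (\Lambda L + 1)}\norm{f}_{L^2(Q)}^2 ,
\]
where for $d > 1$ I would reduce to this one-dimensional statement by Fubini---choosing a coordinate direction and a fibre of $Q$ along which the trace of $\omega$ still has proportional length and the trace of $f$ controls the $L^2$-mass---and then apply the sharp one-dimensional Remez inequality on that fibre. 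Summing over $m \in G$ and using Step~1,
\begin{align*}
	\norm{f}_{L^2(\omega)}^2
	&\geq \sum_{m\in G}\norm{f}_{L^2(\omega\cap\Lambda_L(m))}^2 \\
	&\geq \Bigl(\tfrac{\gamma}{C}\Bigr)^{Cd(\Lambda L+1)}\sum_{m\in G}\norm{f}_{L^2(\Lambda_L(m))}^2
	\geq \tfrac12\Bigl(\tfrac{\gamma}{C}\Bigr)^{Cd(\Lambda L+1)}\norm{f}_{L^2(\RR^d)}^2 .
\end{align*}
Absorbing the factor $1/2$ and the dimensional constants into the exponent and writing $\Lambda = \lambda^{1/2}$ produces a bound of the asserted form $(\gamma/K^d)^{KdL\lambda^{1/2} + 2d + 6}$ with a universal $K \geq 1$. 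The hard part will be Step~2: squeezing the exponent of the correct order $\sim dL\sqrt{\lambda}$---and not something worse---out of analyticity requires the sharp one-dimensional Remez inequality together with a careful fibering argument keeping the $d$-dependence linear; the weight bookkeeping in Step~1, which must preserve a uniform good-cube constant while discarding only half the mass, also takes some care.
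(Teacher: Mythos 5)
The paper does not prove Proposition~\ref{prop:ucp-laplace} at all: it is stated as a direct citation of \cite[Corollary~1.5]{EgidiS-21}, so your first option (``quote the reference'') is exactly what the paper does, and that part of your proposal is correct.

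Your sketch of a self-contained Kovrijkine-type argument captures the right high-level structure (partition into cubes, good/bad cubes via Bernstein, a local estimate on each good cube, summation), and the identification of $\Ran P_\lambda$ with Paley--Wiener functions of type $\sqrt\lambda$ and the $L^2$ Bernstein inequality are both correct. However, there are two places where the sketch as written glosses over the genuinely hard content, and these are precisely the places where the cited reference (and this paper's own Section~\ref{sec:spectral-inequality}, which proves the analogous inequality for partial harmonic oscillators) spends its effort. First, in Step~1 the passage from the $L^2$ bounds $\norm{\partial^\alpha f}_{L^2(Q)}\lesssim (C\Lambda)^{|\alpha|}\norm{f}_{L^2(Q)}$ on a good cube $Q$ to geometrically decaying \emph{pointwise} Taylor coefficients is not an application of Sobolev embedding. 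One needs the pigeonhole/contradiction argument that produces a single ``good point'' $x_0\in Q$ with $\sum_{|\alpha|=k}\frac{|\partial^\alpha f(x_0)|^2}{\alpha!}\leq C^{k+1}\frac{\norm{f}_{L^2(Q)}^2}{|Q|}$ for all $k$ simultaneously (this is exactly \eqref{eq:pointwiseLocalBernstein} in this paper, following \cite{Kovrijkine-thesis,EgidiS-21}); Sobolev embedding alone only controls finitely many derivatives and would ruin the $d$- and $\Lambda$-dependence. Second, in Step~2 the phrase ``Remez-type inequality for (near-)polynomials'' understates the analytic input: what is actually used is the Nazarov--Kovrijkine local estimate for functions holomorphic on a polydisc (here Lemma~\ref{lem:localEstimate}, equivalently \cite[Lemma~3.5]{EgidiS-21}), obtained by combining a normal-families bound via the good point $x_0$ with Jensen's-formula/zero-counting and \emph{then} the one-dimensional Remez (or Tur\'an) inequality. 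This is where genuine entireness of $f$, not merely approximation by polynomials, is used. The Fubini reduction to a good one-dimensional fibre, which you correctly flag as delicate, is inside Kovrijkine's proof of that lemma; as stated in this paper, Lemma~\ref{lem:localEstimate} already packages a multidimensional version, so the fibering never surfaces explicitly. Finally, the Chebyshev bookkeeping with $K_\alpha = 2^{|\alpha|}\binom{|\alpha|+d-1}{d-1}$ gives $\sum_{\alpha\neq 0}K_\alpha^{-1}=1$, not $\leq 1/2$; doubling $K_\alpha$ fixes this at no essential cost. These are all fixable, but as written the sketch would not compile into a complete proof without importing exactly the machinery of \cite{Kovrijkine-01,EgidiS-21} that the citation short-circuits.
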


Both dissipation estimates spelled out in Propositions~\ref{prop:dissipation-hermite} and \ref{prop:dissipation-general-S-Laplace}
cover the case $S(A)= \{0\}$. Hence, it is natural to compare the two complementing spectral inequalities in
Propositions~\ref{prop:ucp-hermite} and~\ref{prop:ucp-laplace} in this case;
clearly, the requirement on $\omega$ in Proposition~\ref{prop:ucp-hermite} is less restrictive.
In this sense, if $S(A)= \{0\}$, using the harmonic oscillator
as a comparison operator for $A$ allows for more general sensors sets than using the pure Laplacian.
This suggests that also for $S(\Delta)\supsetneq S(A) \supsetneq S(\Delta-|x|^2)$, that is,
\[
	\RR^d\times\{0\}
	\supsetneq
	S(A)
	\supsetneq
	\{0\}
	,
\]
there are better comparison operators than the Laplacian.
%
%
\section{Main results}\label{sec:main_results}

Our first main result on the way to establish observability is a dissipation estimate
that allows to treat quadratic differential operators $A$ with singular space $S = \RR^d_\cN\times\{0\}$.
Here we set
\[
	M_\cN^d
	=
	\{ x\in M^d\colon x_j = 0 \quad\text{for all}\quad j\notin\cN\}
\]
for $M\subset \RR$ and $\cN\subset\{1,\dots,d\}$.
Note that by a suitable rotation also more general singular spaces of the form $S = U \times \{ 0 \}$
for some subspace $U\subset\RR^d$ can be handled, see Subsection~\ref{ssec:crooked} below.

We consider here quadratic differential operators with $S(A) = S(H_\cI)$, where
\[
	H_{\cI}
	=
	-\Delta + |x_\cI|^2
	,
	\quad
	\cI\subset \{1,\dots,d\}
	,
\]
is the partial harmonic oscillator with singular space $S(H_\cI) = \RR^d_{\cI^\complement}\times\{0\}$.
The equality $S(A) = S(H_\cI)$ plays a crucial role in the smoothing estimates underlying the proof of the dissipation estimate.
We discuss this in detail in Section~\ref{sec:dissipation} together with the input we use from \cite{Alphonse-20, AlphonseB-21}.

In order to formulate the dissipation estimate we denote by
\be\label{eq:spectral-projection-partial-harmonic-oscillator}
  	P_\lambda
	=
	\indic_{(-\infty,\lambda]}(H_{\cI})
\ee
the projection onto the spectral subspace of $H_{\cI}$ associated to the interval $(-\infty,\lambda]$.

\begin{theorem}[Dissipation estimate]\label{thm:dissipation}
	Let $S(A) = S(H_\cI) = \RR^d_{\cI^\complement} \times \{ 0 \}$ for some set $\cI \subset \{1,\dots,d\}$.
	Then there are $c_0, t_0 > 0$ such that
	\[
		\norm{(1-P_\lambda)\cT(t)g}_{L^2(\RR^d)}
		\leq
		c_0 \euler^{-c_0t^{2k_0+1}\lambda}\norm{g}_{L^2(\RR^d)}
	\]
	for all $0 < t < t_0$, $\lambda\geq 1$, and $g\in L^2(\RR^d)$
	where $P_\lambda$ is the projection in \eqref{eq:spectral-projection-partial-harmonic-oscillator} and $k_0$ is the rotation exponent from \eqref{eq:k_0}.
\end{theorem}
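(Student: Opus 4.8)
The plan is to derive the dissipation estimate from a quantitative anisotropic Gelfand--Shilov smoothing estimate for the semigroup, following the comparison-operator scheme used for the harmonic oscillator in \cite{BeauchardPS-18}. Since $H_\cI\geq0$ and $1-P_\lambda=\indic_{(\lambda,\infty)}(H_\cI)$, the spectral theorem gives, for every $N\in\NN_0$, $\lambda\geq1$, and $h\in\cD(H_\cI^N)$,
\[
	\norm{(1-P_\lambda)h}_{L^2(\RR^d)}
	\leq
	\lambda^{-N}\,\norm{H_\cI^N h}_{L^2(\RR^d)}
	,
\]
because $\indic_{(\lambda,\infty)}(\mu)\leq(\mu/\lambda)^N$ for $\mu\geq0$ and $\lambda\geq1$. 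Applying this with $h=\cT(t)g$ — which lies in $\cD(H_\cI^N)$ by the smoothing estimate below — reduces the problem to bounding $\norm{H_\cI^N\cT(t)g}$ in terms of $t$ and $N$ and then optimizing over $N$.

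The analytic core is the claim that there exist $C\geq1$ and $t_0>0$, depending only on $q$ and $d$, such that
\[
	\norm{H_\cI^N\cT(t)g}_{L^2(\RR^d)}
	\leq
	C^{N+1}\,N!\,t^{-(2k_0+1)N}\,\norm{g}_{L^2(\RR^d)}
	\quad\text{for all}\quad
	N\in\NN_0,\ 0<t<t_0,\ g\in L^2(\RR^d)
	.
\]
This is where the hypothesis $S(A)=S(H_\cI)=\RR^d_{\cI^\complement}\times\{0\}$ enters decisively. Expanding $H_\cI^N$ into a sum of monomials $x_\cI^\alpha\partial_x^\beta$ with $\abs{\alpha}+\abs{\beta}\leq2N$ and $\alpha$ supported in $\cI$, the estimate reduces to controlling $\norm{x_\cI^\alpha\partial_x^\beta\cT(t)g}$: the singular space $S(A)$ lying in the position subspace forces $\cT(t)$ to regularize in \emph{all} frequency directions (a gain of every $\partial_x^\beta$), while the position component $\RR^d_{\cI^\complement}$ of $S(A)$ identifies precisely the position directions $x_\cI$ in which polynomial weights may be gained; the blow-up rate as $t\to0^+$ is governed by the rotation exponent $k_0$ from \eqref{eq:k_0}, producing the power $2k_0+1$. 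Establishing this smoothing estimate — with the sharp power of $t$ and only linear growth in $N$ after the factor $N!$, which is what makes $\lambda$ enter the final exponent to the first power — is the main obstacle. I would obtain it through a streamlined version of the symbol/wave-packet analysis behind \cite{HitrikPS-09,Alphonse-20,AlphonseB-21}, carried out in Section~\ref{sec:dissipation}; this is also the step that ``unifies, interpolates, and generalizes'' the earlier dissipation estimates. Note that self-adjointness of $A$ is nowhere used: only $\Re q\leq0$, which yields the contraction semigroup, and the above smoothing estimate are needed.

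Combining the two displays and using $N!\leq N^N$ gives, for every $N\in\NN_0$,
\[
	\norm{(1-P_\lambda)\cT(t)g}_{L^2(\RR^d)}
	\leq
	C\Bigl(\frac{C\,N}{t^{2k_0+1}\lambda}\Bigr)^{N}\norm{g}_{L^2(\RR^d)}
	.
\]
Choosing $N$ comparable to $t^{2k_0+1}\lambda$ — precisely $N=\lfloor t^{2k_0+1}\lambda/(\euler C)\rfloor$ once this is at least $1$, so that the bracket is bounded by $\euler^{-1}$ — makes the right-hand side at most $\euler C\,\euler^{-t^{2k_0+1}\lambda/(\euler C)}\norm{g}_{L^2(\RR^d)}$, while for $t^{2k_0+1}\lambda$ bounded the contraction property already gives a trivial bound; the usual bookkeeping over the two regimes then produces $\norm{(1-P_\lambda)\cT(t)g}_{L^2(\RR^d)}\leq c_0\,\euler^{-c_0 t^{2k_0+1}\lambda}\norm{g}_{L^2(\RR^d)}$ with $c_0,t_0>0$ depending only on $q$ and $d$. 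In particular, the special cases $\cI=\{1,\dots,d\}$ and $\cI=\emptyset$ recover Propositions~\ref{prop:dissipation-hermite} and~\ref{prop:dissipation-general-S-Laplace}.
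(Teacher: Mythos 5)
Your proposal follows essentially the same route as the paper: take the anisotropic Gelfand--Shilov smoothing of \cite[Theorem~2.6]{AlphonseB-21} as input (i.e.\ the bounds on $\norm{x^\alpha\partial^\beta\cT(t)g}$ with $\alpha$ supported in $\cI$, recorded in the paper as Theorem~\ref{thm:general-smoothing}), convert this into control on powers of $H_\cI$ applied to $\cT(t)g$, and transfer to the high-energy spectral projector $1-P_\lambda$. The only genuine departure is cosmetic and occurs at the endgame: the paper sums the power bounds to show $\cT(t)g\in\cD(\euler^{ct^{2k_0+1}H_\cI})$ with $\norm{\euler^{ct^{2k_0+1}H_\cI}\cT(t)g}\leq 2\norm{g}$ (Theorem~\ref{thm:L2-bound}) and then uses $\norm{\euler^{-sH_\cI}(1-P_\lambda)}\leq\euler^{-s\lambda}$, whereas you use the Markov-type bound $\norm{(1-P_\lambda)h}\leq\lambda^{-N}\norm{H_\cI^N h}$ and optimize over $N$. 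These are two standard and equivalent presentations of the same Chernoff-style argument, and your bookkeeping over the regimes $t^{2k_0+1}\lambda\gtrless 1$ is fine.

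The one substantive piece you identify but do not supply is precisely the estimate you call the ``analytic core'', $\norm{H_\cI^N\cT(t)g}\leq C^{N+1}N!\,t^{-(2k_0+1)N}\norm{g}$. Be aware that this does \emph{not} follow from a naive multinomial expansion of $H_\cI^N$ into monomials $x_\cI^\alpha\partial^\beta$: since $x_j$ and $\partial_j$ do not commute on $\cI$, normal ordering $(-\Delta_{\cI}+|x_\cI|^2)^{\nu_1}$ produces lower-order terms whose coefficients themselves grow like $\nu_1!$ (times geometric factors), and these must be balanced against the $(\alpha!)^{1/2}(\beta!)^{1/2}$ from the Gelfand--Shilov smoothing so that the net growth in $N$ is only $C^N N!$. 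This is exactly what the paper's Lemma~\ref{lem:technical-lemma-dissipation} does, by splitting $H_\cI$ into the three pairwise-commuting pieces $T_1$ (full harmonic oscillator on $\cI\cap\cJ$), $T_2$ (Laplacian on $\cJ\setminus\cI$), $T_3$ (potential on $\cI\setminus\cJ$) and invoking the normal-ordering identity with the explicit coefficient bound \eqref{eq:coefficients-identity-harmonic-oscillator} from \cite{MartinPS-20-arxiv}. You flag this as something you ``would obtain'' and that is an honest assessment: the reduction, the role of $S(A)=\RR^d_{\cI^\complement}\times\{0\}$ in selecting the allowed position monomials, the source of the exponent $2k_0+1$, and the self-adjointness remark are all correctly identified, but as written the proposal is a correct plan whose combinatorial heart is left to be filled in.
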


Theorem~\ref{thm:dissipation}, or rather the more general Corollary~\ref{thm:general-dissipation}
below, covers and extends all previous dissipation estimates
obtained in \cite{BeauchardPS-18,Alphonse-20,MartinPS-20-arxiv}.

The second main result is a tailored spectral inequality
complementing the dissipation estimate in Theorem~\ref{thm:dissipation}.
Here, it turns out that the anisotropy of the potential of the partial harmonic oscillator $H_{\cI}$
translates into decay properties of functions in its spectral subspace $\Ran P_\lambda$:
The functions exhibit decay in those coordinate directions where the potential $V(x) = \sum_{j\in\cI}x_j^2$ grows.
This decay allows us to prove the spectral inequality without requiring thickness on the sensor set $\omega$.
In the particular case $\cI = \{1,\dots,d\}$ we recover Proposition~\ref{prop:ucp-hermite}.

\begin{theorem}[Spectral inequality]\label{thm:ucp}
	Let $\omega \subset \RR^d$ be measurable satisfying
	\be\label{eq:control-set-fixed-cubes}
		\frac{\abs{\omega \cap \Lambda_L(m)}}{\abs{\Lambda_L(m)}}
		\geq
		\gamma^{1+\abs{m_\cI}^a}
		\quad\text{for all}\
		m \in (L\ZZ)^d
	\ee
	with some fixed $a \in [0,1)$, $L > 0$, and $\gamma \in (0,1)$.

	Then, there is a universal constant $K \geq 1$ such that for every  $\lambda\in [1,\infty)$ and all
	$f \in \Ran P_\lambda$, with $P_\lambda$ as in \eqref{eq:spectral-projection-partial-harmonic-oscillator}, we have
	\be\label{eq:specIneq-fixed-cubes}
		\norm{f}_{L^2(\omega)}^2
		\geq
		3\Bigl( \frac{\gamma}{K^d} \Bigr)^{K d^{1+a} (1+L)^2\lambda^{(1+a)/2}}
		\norm{f}_{L^2(\RR^d)}^2
		.
	\ee
\end{theorem}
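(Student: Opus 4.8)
The plan is to combine a dimension-reduction / tensorization argument with the known harmonic-oscillator spectral inequality (Proposition~\ref{prop:ucp-hermite}) applied in the "oscillating" variables $x_\cI$ and the Kovrijkine-type inequality (Proposition~\ref{prop:ucp-laplace}) in the "flat" variables $x_{\cI^\complement}$. Write $\RR^d = \RR^{|\cI|}_x \times \RR^{d-|\cI|}_y$, so that $H_\cI = H_{\mathrm{osc}} \otimes \Id + \Id \otimes(-\Delta_y)$, and $\Ran P_\lambda$ decomposes along the joint spectral decomposition of the two commuting self-adjoint operators. Every $f \in \Ran P_\lambda$ can be expanded in Hermite functions in $x$, i.e. $f(x,y) = \sum_{n} h_n(x)\, g_n(y)$ where $h_n$ runs over the orthonormal basis of Hermite eigenfunctions with $H_{\mathrm{osc}} h_n = \mu_n h_n$, and where, crucially, $g_n \in \Ran \indic_{(-\infty, \lambda - \mu_n]}(-\Delta_y)$, so each $g_n$ is band-limited with cutoff $\lambda - \mu_n \le \lambda$. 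This is the point where the Appendix's dimension-reduction lemmas enter.

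First I would reduce to a single cube $\Lambda_L(m)$: it suffices to show
\[
	\norm{f}_{L^2(\omega \cap \Lambda_L(m))}^2
	\geq
	\Bigl(\tfrac{\gamma}{K^d}\Bigr)^{K d^{1+a}(1+L)^2 \lambda^{(1+a)/2}}
	\norm{f}_{L^2(\Lambda_L(m))}^2
\]
for a cube that "dominates the mass" in a suitable sense; but since the weight in~\eqref{eq:control-set-fixed-cubes} depends only on $m_\cI$, the dependence on the $y$-position of the cube is uniform (thickness, exactly condition~\eqref{eq:thick} restricted to the $y$-slab), while the dependence on the $x$-position carries the anisotropic factor $\gamma^{1+|m_\cI|^a}$. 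So in the $y$-variables I invoke Kovrijkine's inequality, which costs a factor of the shape $(\gamma/K^{d})^{K d L \lambda^{1/2} + \dots}$ with no growth in the cube index, and in the $x$-variables I invoke Proposition~\ref{prop:ucp-hermite}, whose cost $(\gamma/K^{|\cI|})^{K |\cI|^{5/2+a}(1+L)^2 \lambda^{(1+a)/2}}$ already absorbs the $|m_\cI|^a$-growth of the weight. The exponent $\lambda^{(1+a)/2}$ in the final bound is exactly the one coming from the harmonic-oscillator inequality; the Laplacian contributes only $\lambda^{1/2} \le \lambda^{(1+a)/2}$, so it is dominated. I would then multiply the two one-variable estimates: the subtlety is that one has to interleave them on each fixed Hermite mode $h_n$, applying the $y$-inequality to $g_n$ at level $\lambda - \mu_n$ and then summing, or, more cleanly, apply an abstract "tensor product of spectral inequalities" lemma (this is the natural candidate for something proved in the Appendix via the tensor structure). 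Care must be taken that the constant $K$ and the powers of $d$ are handled so that the combined exponent genuinely has the claimed form $K d^{1+a}(1+L)^2 \lambda^{(1+a)/2}$ — in particular that $d^{5/2+a}$ from Proposition~\ref{prop:ucp-hermite} can be replaced by $d^{1+a}$, which presumably requires the sharper Bernstein-type lemma credited to Egidi in the Acknowledgments rather than a black-box application of the companion paper's result.

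The main obstacle I expect is precisely this last bookkeeping step: gluing the Hermite-side and Laplacian-side inequalities together \emph{uniformly in the mode $n$} without losing a factor that depends on the number of modes (which would be catastrophic, since $\Ran P_\lambda$ is infinite-dimensional in $y$ and has $\sim \lambda^{|\cI|/2}$ relevant Hermite modes in $x$). The right way around this is to avoid mode-by-mode summation and instead prove the two-variable inequality directly by a Kovrijkine-style argument — covering, good-point selection, and a Bernstein/Remez inequality — carried out \emph{jointly} in $(x,y)$, using Hermite analyticity estimates in $x$ (as in \cite{DickeSV-21}) and polynomial growth of band-limited functions in $y$. Concretely: decompose $\RR^d$ into the cubes $\Lambda_L(m)$, on each cube bound $\norm{f}_{L^2(\Lambda_L(m))}$ from below by $\norm{f}_{L^2(\omega \cap \Lambda_L(m))}$ using a local Remez-type inequality with constant governed by the local "analytic norm ratio" of $f$, then sum over $m$ controlling these ratios globally via the Gelfand--Shilov / analyticity bounds on $\Ran P_\lambda$ and the growth of the weight $\gamma^{1+|m_\cI|^a}$. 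This is exactly the strategy announced for Theorem~\ref{thm:gen} in Section~\ref{sec:spectral-inequality}, so I would structure the proof to specialize that general argument, checking only that the anisotropic weight in~\eqref{eq:control-set-fixed-cubes} is compatible with the decay of functions in $\Ran P_\lambda$ in the $x_\cI$-directions.
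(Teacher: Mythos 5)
Your proposal arrives, in its second half, at the paper's actual strategy: prove a general Kovrijkine-type spectral inequality (Theorem~\ref{thm:gen}) jointly in all $d$ variables, and then specialize it to the covering by cubes $\Lambda_L(m)$, $m \in (L\ZZ)^d$. The tensorization route you sketch first --- multiplying the one-dimensional spectral inequalities mode by mode in the Hermite expansion, or invoking an abstract ``tensor product of spectral inequalities'' --- is indeed a dead end, and you diagnose the failure mode correctly (loss of a factor scaling with the number of modes). No such tensorization lemma appears in the Appendix; the tensor structure of $H_\cI$ (Lemma~\ref{lem:reprH}, Corollary~\ref{cor:specfamH}) is used there only to reduce the Bernstein inequality (Lemma~\ref{lem:Bernstein}) and the anisotropic decay estimate (Lemma~\ref{lem:decay}) to their one-dimensional counterparts, and those two ingredients then feed into the joint Kovrijkine argument (good/bad cubes, local Remez estimate) exactly as you propose. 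What your sketch glosses over is the concrete reduction step that constitutes the paper's proof of Theorem~\ref{thm:ucp}: verify Hypothesis $(\mathrm{H}_\lambda)$ for $(\Lambda_L(m))$ with $\kappa = 1$, $\eps = 1$, $D = \sqrt{d}\,L$; invoke Corollary~\ref{cor:choice_Kc} so that only cubes with $|m_\cI| \lesssim \lambda^{1/2}$ contribute appreciably; and then replace the weight $\gamma^{1+|m_\cI|^a}$ by $\bigl(\gamma^{2(2C)^a}\bigr)^{\lambda^{a/2}}$ before applying Theorem~\ref{thm:gen}, which is precisely where the exponent $\lambda^{(1+a)/2} = \lambda^{1-(\eps-a)/2}|_{\eps=1}$ and the $(1+L)^2$ prefactor come from. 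Your guess that the improvement from $d^{5/2+a}$ to $d^{1+a}$ hinges on the sharper Bernstein-type lemma is also on the mark.
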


A result allowing more general sets $\omega$ is deferred to Corollary~\ref{cor:specIneq:genSets} below.

\begin{example}\label{ex:specIneq-fixed-cubes}
	Let $a,\gamma \in (0,1)$, $L=1$, and set
	\[
		\omega
		=
		\bigcup_{k\in\ZZ^d} \Lambda_{r_k}(k)\quad\text{with}\quad r_k = \gamma^{(1+\abs{k_\cI}^a)/d}
		.
	\]
	Then, $\omega$ satisfies
	\[
		\frac{\abs{\omega \cap \Lambda_1(k)}}{\abs{\Lambda_1(k)}}
		=
		\gamma^{1 + \abs{k_\cI}^a}
		\quad\text{ for all}\
		k \in \ZZ^d
		,
	\]
	so that the hypotheses of Theorem~\ref{thm:ucp} are satisfied, and we obtain
	\bes
		\norm{f}_{L^2(\omega)}^2
		\geq
		3\Bigl( \frac{\gamma}{K^d} \Bigr)^{4K d^{1+a} \lambda^{(1+a)/2}} \norm{f}_{L^2(\RR^d)}^2
	\ees
	for all $f \in \Ran P_{(-\infty,\lambda]}(H_\cI)$, $\lambda \geq 1$.
	Note that $\gamma_1=(1+a)/2< 1$, while on the other hand, the set $\omega$ is \emph{not} thick in $\RR^d$.
	In particular, if $\cI = \{1,\dots,d\}$, it even has finite measure since $\gamma \in (0,1)$.
	The latter holds also for $\omega$ as in \eqref{eq:decay-twosided}.
\end{example}

\begin{remark}\label{rem:fractional-operator}
	Analogously to \cite{NakicTTV-20} for the fractional Laplacian,
	we can also treat the fractional harmonic oscillator $H_\cI^\theta = (-\Delta+|x_\cI|^2)^\theta$ for certain $\theta>1/2$.
	More precisely, if $\omega$ satisfies \eqref{eq:control-set-fixed-cubes},
	then \eqref{eq:specIneq-fixed-cubes} implies
	\bes
		\norm{f}_{L^2(\omega)}^2
		\geq
		3\Bigl( \frac{\gamma}{K^d} \Bigr)^{K d^{1+a} (1+L)^2\lambda^{\frac{1+a}{2\theta}}}
		\norm{f}_{L^2(\RR^d)}^2
	\ees
	for $f \in  \Ran P_{(-\infty,\lambda]}(H_\cI^\theta)= \Ran P_{(-\infty,\lambda^{1/\theta}]}(H_\cI)$, relying on
	the transformation formula for spectral measures.
	This requires $\theta>(1+a)/2$ in order to guarantee $\gamma_1=\frac{1+a}{2\theta} <1$ in \eqref{eq:spectral}.
\end{remark}

The combination of the dissipation estimate in Theorem~\ref{thm:dissipation} and the spectral inequality in Theorem~\ref{thm:ucp}
implies by the Lebeau-Robbiano method the following observability result.
In view of Example~\ref{ex:specIneq-fixed-cubes} it sharpens \cite[Theorem~1.12]{Alphonse-20}.

\begin{theorem}[Observability]\label{thm:observability}
	Let $A$ be a quadratic differential operator on $\RR^d$ with singular space $S(A) = S(H_\cI) = \RR^d_{\cI^\complement} \times \{ 0 \}$ for some set
	$\cI \subset \{1,\dots,d\}$, and let $\omega$ be as in Theorem~\ref{thm:ucp}.
	Then the abstract Cauchy problem \eqref{eq:controlled-abstract-Cauchy-problem} with $\cOp = \indic_\omega$ is final-state observable.
\end{theorem}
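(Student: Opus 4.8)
The plan is to derive the claim from the Lebeau--Robbiano machinery packaged in Theorem~\ref{thm:obs_and_control}, feeding in the dissipation estimate of Theorem~\ref{thm:dissipation} and the spectral inequality of Theorem~\ref{thm:ucp}, with the family of projections $(P_\lambda)_{\lambda \in [1,\infty)}$ taken to be $P_\lambda = \indic_{(-\infty,\lambda]}(H_\cI)$ as in \eqref{eq:spectral-projection-partial-harmonic-oscillator}. The entire proof then reduces to checking the hypotheses of Theorem~\ref{thm:obs_and_control}.

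First I would dispatch the structural assumptions: since $q$ satisfies $\Re q \leq 0$, Proposition~\ref{prop:m-dissipative} shows that $A$ generates a strongly continuous contraction semigroup on $L^2(\RR^d)$; and the lower bound \eqref{eq:control-set-fixed-cubes} with $\gamma \in (0,1)$ forces $\abs{\omega \cap \Lambda_L(m)} > 0$ for every $m \in (L\ZZ)^d$, so $\omega$ is measurable with positive Lebesgue measure. Next, for the spectral inequality \eqref{eq:spectral} I would apply Theorem~\ref{thm:ucp} to $f = P_\lambda g \in \Ran P_\lambda$ and rearrange, obtaining
\[
	\norm{P_\lambda g}_{L^2(\RR^d)}^2 \leq \frac{1}{3}\Bigl(\frac{K^d}{\gamma}\Bigr)^{K d^{1+a}(1+L)^2 \lambda^{(1+a)/2}} \norm{P_\lambda g}_{L^2(\omega)}^2 \qquad (\lambda \geq 1),
\]
that is, \eqref{eq:spectral} holds with $d_0 = 1/3$, $d_1 = K d^{1+a}(1+L)^2\log(K^d/\gamma)$, and $\gamma_1 = (1+a)/2$. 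For the dissipation estimate \eqref{eq:dissipation} I would simply square the bound of Theorem~\ref{thm:dissipation}:
\[
	\norm{(I-P_\lambda)\cT(t)g}_{L^2(\RR^d)}^2 \leq c_0^2\,\euler^{-2c_0 t^{2k_0+1}\lambda}\norm{g}_{L^2(\RR^d)}^2 \qquad (0<t<t_0,\ \lambda \geq 1),
\]
which is \eqref{eq:dissipation} with $d_2 = \max\{c_0^2,1\}$, $d_3 = 2c_0$, $\gamma_2 = 1$, and $\gamma_3 = 2k_0+1$.

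With these identifications in place I would invoke Theorem~\ref{thm:obs_and_control}: its remaining requirement is the spectral gap $\gamma_2 > \gamma_1$, which here reads $1 > (1+a)/2$ and holds precisely because $a \in [0,1)$. The theorem then yields the observability inequality
\[
	\norm{\cT(T)g}_{L^2(\RR^d)}^2 \leq C_{\mathrm{obs}}^2 \int_0^T \norm{\indic_\omega \cT(t)g}_{L^2(\RR^d)}^2 \Diff t, \qquad g \in L^2(\RR^d),
\]
for every $T > 0$, with $C_{\mathrm{obs}}^2 = C\exp\bigl(C\,T^{-(2k_0+1)(1+a)/(1-a)}\bigr)$; since $\cOp = \indic_\omega$ this is exactly final-state observability of \eqref{eq:controlled-abstract-Cauchy-problem}.

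I do not expect a genuine obstacle here: the real content sits in Theorems~\ref{thm:dissipation} and~\ref{thm:ucp}, which are proved elsewhere in the paper. The one point requiring attention is the bookkeeping of the exponents so that the gap condition $\gamma_2 > \gamma_1$ is met; this is exactly where the restriction $a < 1$, and hence the anisotropically decaying nature of the admissible sensor sets in \eqref{eq:control-set-fixed-cubes}, is indispensable.
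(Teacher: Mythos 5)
Your proposal is correct and takes essentially the same approach as the paper: the paper's ``proof'' of Theorem~\ref{thm:observability} is the single sentence preceding its statement, asserting that it follows from Theorems~\ref{thm:dissipation} and~\ref{thm:ucp} via the Lebeau--Robbiano method, i.e.\ by invoking Theorem~\ref{thm:obs_and_control} with $P_\lambda = \indic_{(-\infty,\lambda]}(H_\cI)$. You have spelled out exactly the bookkeeping the paper leaves implicit (the identification of $d_0,d_1,\gamma_1,d_2,d_3,\gamma_2,\gamma_3$ and the verification of the gap $\gamma_2 > \gamma_1$, which hinges on $a<1$), and the resulting observability cost $C_{\mathrm{obs}}^2 = C\exp\bigl(C\,T^{-(2k_0+1)(1+a)/(1-a)}\bigr)$ is the correct consequence of the exponent formula in Theorem~\ref{thm:obs_and_control}.
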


\begin{remark}
	It is also possible to treat operators $A$ with $S(A) = U \times \{0\}$, where $U$ is some subspace of $\RR^d$.
	This allows, e.g., to consider for $d=2$ the symbol $q(x,\xi) = -|\xi|^2-(x_1+x_2)^2$ on $L^2(\RR^2)$, the singular
	space of which is of the above form with $U = \{r\cdot (1,-1)^\top\colon r\in\RR\}$,
	cf.~Subsection~\ref{ssec:crooked} below.
\end{remark}

We close this section by discussing a null-controllability result that follows from the facts discussed so far.

It is well known that the adjoint $A^*$ of $A$ is again a quadratic differential operator with symbol $\overline{q}$, see~\cite[Proposition~1.2.10]{NicolaR-10}.
Since $q$ and $\overline{q}$ have the same singular space and the same rotation exponent,
this establishes that also the abstract Cauchy problem
\bes
	x'(t)
	=
	A^*x(t)
	,
	\quad
	y(t)
	=
	\indic_\omega x(t)
	,
	\quad
	x(0)
	=
	x_0
	,
\ees
corresponding to $A^*$ is observable from $\omega$.
By the well-known Hilbert uniqueness method this implies null-controllability in time $T > 0$ of
\be\label{eq:abstract-Cauchy-problem-control}
	w'(t)
	=
	Aw(t) + \indic_\omega u(t)
	,
	\quad
	w(0)
	=
	w_0
	,
\ee
that is, for all $w_0 \in L^2(\RR^d)$ there exists $u \in L^2((0,T);L^2(\RR^d))$ such that the mild solution
\[
	w(t)
	=
	\cT(t)w_0 + \int_0^t \cT(t-s)\indic_\omega u(s)\Diff{s}
\]
to \eqref{eq:abstract-Cauchy-problem-control} satisfies $w(T) = 0$,
see also, e.g., \cite{Zuazua-06,Coron-07,TucsnakW-09,EgidiNSTTV-20,NakicTTV-20} and the references cited therein.
In this case, the so-called \emph{control cost}
\[
	C_T
	=
	\sup_{\norm{w_0} = 1} \inf\bigl\{ \norm{u}_{L^2((0,T);L^2(\RR^d))} \colon w(T) = 0\bigr\}
\]
satisfies
\[
	C_T
	\leq
	C_{\mathrm{obs}}
	,
\]
and the latter is finite by Theorem~\ref{thm:observability}.
Thereby, we arrive at the following

\begin{corollary}[Null-controllability]\label{cor:control}
	Let $A$ be a quadratic differential operator on $\RR^d$ with singular space $S(A) = S(H_\cI) = \RR^d_{\cI^\complement} \times \{ 0 \}$ for some set
	$\cI \subset \{1,\dots,d\}$, and let $\omega$ be as in Theorem~\ref{thm:ucp}.
	Then the abstract Cauchy problem \eqref{eq:controlled-abstract-Cauchy-problem} with $\cOp = \indic_\omega$ is null-controllable.
\end{corollary}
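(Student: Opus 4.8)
The plan is to obtain the corollary from Theorem~\ref{thm:observability} applied to the adjoint operator, combined with the standard duality between observability and null-controllability furnished by the Hilbert uniqueness method. First I would recall from \cite[Proposition~1.2.10]{NicolaR-10} that the adjoint $A^*$ is again a quadratic differential operator, with symbol $\overline{q}$. Since $\Re\overline{q}=\Re q\le 0$, Proposition~\ref{prop:m-dissipative} shows that $A^*$ is m-dissipative and generates a strongly continuous contraction semigroup, so the abstract framework applies to $A^*$ verbatim. The one point worth spelling out is that the singular space and the rotation exponent are unchanged under passing to the conjugate symbol: the Hamilton map of $\overline{q}$ is $\overline{F}$, so that $\Re\overline{F}=\Re F$ and $\Im\overline{F}=-\Im F$, whence $\ker[\Re\overline{F}\,(\Im\overline{F})^j]=\ker[\Re F\,(\Im F)^j]$ for every $j$; consequently $S(A^*)=S(A)=S(H_\cI)=\RR^d_{\cI^\complement}\times\{0\}$ and the rotation exponents of $q$ and $\overline{q}$ coincide.

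With these observations, Theorem~\ref{thm:observability} applies to $A^*$ and yields that the abstract Cauchy problem $x'(t)=A^*x(t)$, $y(t)=\indic_\omega x(t)$ is final-state observable from the same set $\omega$ as in Theorem~\ref{thm:ucp}. The Hilbert uniqueness method then translates this observability estimate for the adjoint system into null-controllability in every time $T>0$ of the controlled problem \eqref{eq:abstract-Cauchy-problem-control}: for each $w_0\in L^2(\RR^d)$ there is a control $u\in L^2((0,T);L^2(\RR^d))$ steering the mild solution to $w(T)=0$, with the control cost $C_T$ bounded above by the observability constant $C_{\mathrm{obs}}$; see, e.g., \cite{Coron-07,TucsnakW-09,NakicTTV-20} and the references cited therein. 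Since $C_{\mathrm{obs}}<\infty$ by Theorem~\ref{thm:observability}, null-controllability follows, which is precisely the assertion of the corollary.

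I do not expect a genuine obstacle here: the statement is essentially a formal consequence of Theorem~\ref{thm:observability} via the observability/null-controllability duality, and the only step that requires the slightest care is verifying the invariance of the singular space and rotation exponent under complex conjugation of the symbol, which is why I would include that short computation explicitly rather than leave it implicit.
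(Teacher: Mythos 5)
Your proposal matches the paper's own argument essentially verbatim: derive observability for the adjoint $A^*$ from Theorem~\ref{thm:observability} after noting that $A^*$ is the quadratic differential operator with symbol $\overline{q}$ and that conjugation preserves the singular space and rotation exponent, then invoke the Hilbert uniqueness method with $C_T \le C_{\mathrm{obs}}$. The only difference is that you spell out the short computation $\Re\overline{F}=\Re F$, $\Im\overline{F}=-\Im F$ behind the invariance of $S$ and $k_0$, which the paper merely asserts.
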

%
%
\section{Dissipation estimate}\label{sec:dissipation}

Recall that $(\cT(t))_{t\geq 0}$ is a strongly continuous contraction semigroup and that
its generator $A$ is a quadratic differential operator corresponding to a quadratic symbol $q$ with $\Re q \leq 0$.

\subsection{Smoothing effects}

The proof of Theorem~\ref{thm:dissipation} is based on so-called \emph{smoothing effects} of the semigroup $(\cT(t))_{t\geq 0}$.
These describe the fact that for appropriate quadratic symbols $q$ the function $\cT(t)g\in L^2(\RR^d)$, $t > 0$, has a certain regularity
for every $g\in L^2(\RR^d)$.
Several recent works, see, e.g., ~\cite{HitrikPSV-18, Alphonse-20, AlphonseB-21}, show that the smoothing effects of the semigroup
are closely related to the structure of the singular space $S$.
One of the first results, \cite[Proposition~3.1.1]{HitrikPS-09}, shows that for $S(A) = \{ 0 \}$
we have $\cT(t)g\in\cS(\RR^d)$ for all $g \in L^2(\RR^d)$ and $t > 0$.

For comparison, we first state the result that was the main ingredient in the proof of
the dissipation estimate in \cite{BeauchardPS-18}, formulated in Proposition~\ref{prop:dissipation-hermite} above.

\begin{proposition}[{\cite[Proposition~4.1]{HitrikPSV-18}}] \label{prop:smoothing-harmonic-oscillator}
	Let $S(A) = \{0\}$ and let $k_0$ be the rotation exponent from \eqref{eq:k_0}.
	Then there are $c_0, c_0', t_0 > 0$ such that
	\bes
		\norm{\euler^{c_0t^{2k_0+1}(-\Delta+|x|^2)}\cT(t)g}_{L^2(\RR^d)}
		\leq
		c_0' \norm{g}_{L^2(\RR^d)}
		\quad\text{for all}\quad
		0 < t < t_0
		.
	\ees
\end{proposition}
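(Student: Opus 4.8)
The plan is to derive the smoothing estimate from the exact Mehler-type representation of the contraction semigroup $(\cT(t))_{t\ge 0}$ together with the quantitative form of the singular-space analysis. For the first ingredient, recall that since $q$ is a quadratic form with $\Re q\le 0$, the general Mehler formula \cite{Hoermander-95} expresses $\cT(t)=\euler^{tA}$, for $t>0$ small, as
\[
	\cT(t) = b(t)\,\bigl(p_t\bigr)^{\w},
	\qquad
	p_t(X) = \exp\!\bigl(-\tfrac12\langle M_t X, X\rangle\bigr),
	\quad X=(x,\xi)\in\RR^{2d},
\]
where $M_t$ is a complex symmetric matrix built from the Hamilton map $F$ via $\euler^{-2tF}$ and $b(t)$ is an explicit scalar; equivalently, $\cT(t)$ has a Gaussian integral kernel. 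The sole purpose of this step is to have a workable formula for $\cT(t)$.

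The heart of the argument is the quantitative singular-space estimate: the hypothesis $S(A)=\{0\}$ together with the minimality \eqref{eq:k_0} of $k_0$ is equivalent to the existence of $c_1,t_0>0$ such that the time-integrated real part of the symbol along the Hamilton flow generated by $\Im q$ obeys
\[
	\int_0^t \bigl(-\Re q\bigr)\bigl(\euler^{s\Im F}X\bigr)\,\Diff{s}
	\;\geq\;
	c_1\, t^{2k_0+1}\,\lvert X\rvert^2
	\qquad\text{for all } 0<t\le t_0,\ X\in\RR^{2d}.
\]
This is proved by Taylor-expanding $s\mapsto(-\Re q)(\euler^{s\Im F}X)\ge 0$ about $0$: nonnegativity forces the first-order coefficient to vanish, the coefficients up to order $2k_0$ are governed by the maps $\Re F(\Im F)^j$ for $0\le j\le k_0$, and the defining property $\bigcap_{j=0}^{k_0}\ker\bigl[\Re F(\Im F)^j\bigr]\cap\RR^{2d}=\{0\}$ yields $\sum_{j=0}^{k_0}\lvert \Re F(\Im F)^j X\rvert^2\gtrsim\lvert X\rvert^2$; integrating the resulting polynomial of degree $2k_0$ in $s$ over $[0,t]$ produces exactly the power $t^{2k_0+1}$, with remainder terms absorbed for $t\le t_0$. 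Inserting this into the Mehler formula turns it into a uniform lower bound $\Re\langle M_t X,X\rangle\gtrsim t^{2k_0+1}\lvert X\rvert^2$, i.e. the symbol decays like $p_t(X)\lesssim\euler^{-c_2 t^{2k_0+1}\lvert X\rvert^2}$ in phase space, together with matching bounds on its derivatives.

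The last step cashes in this phase-space decay. Write $Q(x,\xi)=\lvert x\rvert^2+\lvert\xi\rvert^2$, so that $-\Delta+\lvert x\rvert^2=Q^{\w}$, and fix $c_0\in(0,c_2)$. On the core of finite linear combinations of Hermite functions the operator $\euler^{c_0 t^{2k_0+1}Q^{\w}}$ acts unambiguously, and there $\euler^{c_0 t^{2k_0+1}Q^{\w}}\cT(t)$ is again a quadratic (pseudo)differential operator whose symbol is, up to uniformly bounded factors, $\euler^{c_0 t^{2k_0+1}Q}\,p_t$, which is uniformly bounded with all derivatives by the previous step. Boundedness on $L^2(\RR^d)$ with norm $\le c_0'$, uniformly in $0<t<t_0$, then follows either from the Calder\'on--Vaillancourt theorem or, more elementarily, from a Schur test applied to the explicit Gaussian kernel; since $\euler^{c_0 t^{2k_0+1}Q^{\w}}$ is a positive self-adjoint operator and the bound holds on a core, it upgrades to $\Ran\cT(t)\subset\cD\bigl(\euler^{c_0 t^{2k_0+1}Q^{\w}}\bigr)$ together with the asserted inequality for all $g\in L^2(\RR^d)$.

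The main obstacle is the middle step: obtaining the lower bound with the \emph{sharp} exponent $2k_0+1$ dictated by the rotation exponent, uniformly in $X\in\RR^{2d}$ and in $t\in(0,t_0]$; this is precisely the subelliptic mechanism behind quadratic operators and requires careful control of the Taylor remainders. A variant that avoids the explicit Mehler formula altogether is to run the whole computation on the FBI/Bargmann side, where $\cT(t)$ becomes an explicit contraction between weighted Bargmann spaces and the weight shift by $\euler^{c_0 t^{2k_0+1}(\cdot)}$ is transparent, along the lines of \cite{Alphonse-20,AlphonseB-21}.
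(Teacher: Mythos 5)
This proposition is not proved in the paper at hand; it is quoted verbatim from \cite[Proposition~4.1]{HitrikPSV-18}, so there is no internal proof to compare against. Evaluated on its own merits, your sketch assembles the right structural ingredients — H\"ormander's Mehler-type formula for $\euler^{tq^{\w}}$, the quantitative subelliptic lower bound on the time-averaged symbol encoded by the triviality of the singular space and the rotation exponent $k_0$, and a final composition/boundedness step — and your description of how the Taylor expansion of $s\mapsto(-\Re q)(\euler^{s\Im F}X)$ together with $\bigcap_{j=0}^{k_0}\ker[\Re F(\Im F)^j]\cap\RR^{2d}=\{0\}$ yields the exponent $2k_0+1$ is the correct mechanism, as is the closure argument upgrading the core estimate to all of $L^2$.

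The genuine gap is in your third step. You assert that $\euler^{c_0 t^{2k_0+1}Q^{\w}}\cT(t)$ has Weyl symbol ``up to uniformly bounded factors'' equal to $\euler^{c_0 t^{2k_0+1}Q}\,p_t$ and then invoke Calder\'on--Vaillancourt or a Schur test. But the Weyl symbol of a composition is the Moyal product, not the pointwise product, and here one of the factors is the symbol of the \emph{unbounded} operator $\euler^{c_0 t^{2k_0+1}Q^{\w}}$, which grows at infinity; the a priori meaning of the Moyal product of a growing Gaussian with a decaying Gaussian, and whether the resulting Gaussian quadratic form is negative definite uniformly in $t\in(0,t_0)$, is exactly the technical content of the proposition and cannot be reduced to a ``uniformly bounded factor'' without an explicit computation of the composed quadratic form and control of its eigenvalues for small $t$. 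Calder\'on--Vaillancourt does not apply out of the box either, since the Gaussian symbols here have derivative bounds degenerating as $t\to 0$. This is precisely why \cite{HitrikPSV-18} (and, in a related spirit, \cite{Alphonse-20,AlphonseB-21}) carries out the argument on the FBI/Bargmann side, where $\cT(t)$ and the Gaussian weight act as explicit integral operators between weighted spaces of holomorphic functions and the competition of the two Gaussians reduces to an elementary positivity condition on a real symmetric matrix; you mention this route as a ``variant,'' but it is really the place where the proof lives. As written, your main line of argument defers the decisive step to an unproved claim about the composed symbol.
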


The last inequality implies (cf.~\cite[Inequality~(4.19)]{HitrikPSV-18}) that for some $C > 0$ we have
\be\label{eq:Corollary-HitrikPSV-18}
	\norm{x^\alpha \partial_x^\beta \cT(t)g}_{L^2(\RR^d)}
	\leq
	\frac{C^{1+|\alpha|+|\beta|}(\alpha!)^{1/2}(\beta!)^{1/2}}{t^{(k_0+1/2)(|\alpha|+|\beta|+2d)}}\norm{g}_{L^2(\RR^d)}
\ee
for all $\alpha, \beta \in \NN_0^d$ and $0 < t < t_0$.
This establishes that the semigroup is smoothing in the so-called Gelfand-Shilov space $S^{1/2}_{1/2}(\RR^d)$.
For the definition of the general Gelfand-Shilov spaces $S^\nu_\mu(\RR^d)$, $\mu,\nu > 0$, see
\cite[Chapter~6]{NicolaR-10}.

An alternative proof of Proposition~\ref{prop:smoothing-harmonic-oscillator} using \eqref{eq:Corollary-HitrikPSV-18} has been suggested in \cite{MartinPS-20-arxiv}.
In a similar way, as observed in \cite[Remark~2.9]{AlphonseB-21}, the technique of \cite[Section~4.2]{Alphonse-20} can be adapted to prove
\bes
		\norm{\euler^{c_0t^{2k_0+1}(-\Delta)}\cT(t)g}_{L^2(\RR^d)}
		\leq
		c_0' \norm{g}_{L^2(\RR^d)}
		\quad\text{for all}\quad
		0 < t < t_0
\ees
using a variant of \eqref{eq:Corollary-HitrikPSV-18} with $\alpha = 0$.

We follow the same path and establish a variant of Proposition~\ref{prop:smoothing-harmonic-oscillator} for the partial harmonic oscillator $H_\cI$.
To this end, we need the following corollary to \cite[Theorem~2.6]{AlphonseB-21}.
In the formulation of this result, the orthogonality is taken with respect to the usual Euclidean inner product on $\RR^{2d}$.
Here we write $\NN_{0,\cI}^d = (\NN_0)^d_{\cI}$ for simplicity.

\begin{theorem}\label{thm:general-smoothing}
	Let $S(A)^\perp = \RR^d_\cI \times \RR^d_{\cJ}$ for some sets $\cI,\cJ \subset \{1,\dots,d\}$
	and let $k_0$ be the rotation exponent from \eqref{eq:k_0}.
	Then there are constants $C > 0$ and $t_0 \in (0,1)$ such that for all $\alpha\in\NN_{0,\cI}^d$, $\beta\in\NN_{0,\cJ}^d$, and
	$0 < t < t_0$ we have
	\be\label{eq:general-smoothing}
		\norm{x^\alpha \partial_x^\beta \cT(t)g}_{L^2(\RR^d)}
		\leq
		\frac{C^{|\alpha| + |\beta|}}{t^{(|\alpha| + |\beta|)(k_0+1/2)}}(\alpha!)^{1/2}(\beta!)^{1/2}\norm{g}_{L^2(\RR^d)}
		.
	\ee
\end{theorem}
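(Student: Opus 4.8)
The plan is to deduce \eqref{eq:general-smoothing} directly from the abstract smoothing statement \cite[Theorem~2.6]{AlphonseB-21}, specialized to the case where the orthocomplement of the singular space has the product form $S(A)^\perp = \RR^d_\cI \times \RR^d_\cJ$. That theorem quantifies the Gelfand--Shilov smoothing of $(\cT(t))_{t\geq 0}$ along the ``non-singular'' phase-space directions, with the rate $t^{-(k_0+1/2)}$ per unit of weight or derivative. Under the product hypothesis the orthogonal projections onto the $x$- and $\xi$-components of $S(A)^\perp$ become the coordinate projections onto $\cI$ and $\cJ$, so the theorem can be read as a bound of the form
\bes
	\norm{\euler^{c_0 t^{2k_0+1} G}\cT(t)g}_{L^2(\RR^d)}
	\leq
	c_0'\norm{g}_{L^2(\RR^d)}
	,\quad 0 < t < t_0,
\ees
where $G = -\sum_{j\in\cJ}\partial_{x_j}^2 + \sum_{j\in\cI}x_j^2$ is the partial harmonic oscillator attached to the two index sets singled out by $S(A)^\perp$. (For $\cI = \cJ = \{1,\dots,d\}$ this is exactly Proposition~\ref{prop:smoothing-harmonic-oscillator}, for $\cI = \emptyset$ it is the Laplacian variant recalled after it, and for $A = H_\cI$ it gives $G = H_\cI$ with $k_0 = 0$.) The first step is to make this reduction precise and to check that the constants and the exponent $k_0$ transfer verbatim.

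The second step is to pass from the exponential bound to the monomial bound \eqref{eq:general-smoothing}. Writing $\cT(t)g = \euler^{-c_0 t^{2k_0+1} G} h$ with $\norm{h}_{L^2} \leq c_0'\norm{g}_{L^2}$, it suffices to establish the elementary inequality
\bes
	\norm{x^\alpha \partial_x^\beta \euler^{-sG}h}_{L^2(\RR^d)}
	\leq
	\frac{\tilde C^{|\alpha|+|\beta|}}{s^{(|\alpha|+|\beta|)/2}}(\alpha!)^{1/2}(\beta!)^{1/2}\norm{h}_{L^2(\RR^d)}
\ees
for all $\alpha \in \NN_{0,\cI}^d$, $\beta \in \NN_{0,\cJ}^d$ and $s > 0$, and then to set $s = c_0 t^{2k_0+1}$, which produces the claimed power $t^{(|\alpha|+|\beta|)(k_0+1/2)}$. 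The operator $G$ factorizes as a tensor product over coordinate directions: in a direction $j \in \cI \cap \cJ$ it is the one-dimensional harmonic oscillator $-\partial_j^2 + x_j^2$, in $j \in \cI \setminus \cJ$ it is multiplication by $x_j^2$, in $j \in \cJ \setminus \cI$ it is $-\partial_j^2$, and it is trivial otherwise. Since $\operatorname{supp}\alpha \subset \cI$ and $\operatorname{supp}\beta \subset \cJ$, in each direction one applies only $x_j^{\alpha_j}$ against an $\euler^{-s x_j^2}$-type factor, or $\partial_j^{\beta_j}$ against an $\euler^{s\partial_j^2}$-type factor on the Fourier side, or both against the one-dimensional Mehler kernel. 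Each of these is a classical estimate — $\sup_{r}|r|^{k}\euler^{-sr^2} \leq (C/s)^{k/2}(k!)^{1/2}$ for the multiplication and Fourier factors, and the Gelfand--Shilov bound for $\euler^{-s(-\partial^2+x^2)}$ coming from the explicit Mehler formula, which is precisely the one-dimensional computation underlying \eqref{eq:Corollary-HitrikPSV-18}. Taking the product over $j$ and absorbing dimensional constants into $\tilde C$ yields the displayed inequality.

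The genuine difficulty here is bookkeeping rather than analysis: one has to match the precise formulation of \cite[Theorem~2.6]{AlphonseB-21}, which is stated for a general singular space via projections onto the phase-space components of $S(A)^\perp$, with the coordinate monomials $x^\alpha\partial_x^\beta$, and — more importantly — to ensure that the resulting constant $C$ is uniform over \emph{all} multi-indices $\alpha,\beta$, not merely those of bounded length. The exponential-of-operator packaging handles the latter automatically, since Taylor-expanding $\euler^{c_0 t^{2k_0+1}G}$ controls every $x^\alpha\partial_x^\beta$ simultaneously; should \cite[Theorem~2.6]{AlphonseB-21} already be phrased directly in the monomial form, the second step is unnecessary and the corollary reduces to unraveling notation. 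The unimodular factor $(-\ii)^{|\beta|}$ relating $D^\beta$ to $\partial_x^\beta$ is harmless throughout.
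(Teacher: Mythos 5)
Your proposal's main line of argument rests on a misreading of what \cite[Theorem~2.6]{AlphonseB-21} actually provides. That theorem is \emph{already} in the monomial form: it bounds
\bes
	\norm{D_{Y^1}\cdots D_{Y^m}\cT(t)g}_{L^2(\RR^d)}
	\leq
	\frac{C_0^m}{t^{m(k_0+1/2)}}\,(m!)^{1/2}\,\norm{g}_{L^2(\RR^d)}
\ees
for any choice of directions $Y^1,\dots,Y^m$ in a basis of $S(A)^\perp$; it is \emph{not} an exponential-of-operator smoothing bound of the form $\norm{\euler^{c_0t^{2k_0+1}G}\cT(t)g}\leq c_0'\norm{g}$. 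In the paper that exponential bound appears later (Theorem~\ref{thm:L2-bound}) and is \emph{derived from} Theorem~\ref{thm:general-smoothing} via Lemma~\ref{lem:technical-lemma-dissipation} — so your Step~1, which would recover the exponential bound first and then factor monomials back off, has the dependency flowing in the wrong direction and is circular if one is simultaneously trying to prove the monomial estimate.

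Your Step~2 — the tensor factorization of $\euler^{-sG}$ into one-dimensional harmonic, Gaussian-multiplication, and heat-kernel pieces, with the elementary bound $\sup_r|r|^k\euler^{-sr^2}\lesssim (C/s)^{k/2}(k!)^{1/2}$ and Mehler's formula in the mixed directions — is correct and clean, and it would indeed give an independent route \emph{if} the exponential bound were available from another source. But here it isn't, so the step does not help prove the theorem.

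Your fallback sentence (``should the cited theorem already be phrased in the monomial form, the corollary reduces to unraveling notation'') names the correct situation, but calling it ``unraveling notation'' skips the one non-trivial point: under the product hypothesis one chooses the multiset of $m=|\alpha|+|\beta|$ vectors $(e_k,0)$ with multiplicity $\alpha_k$ and $(0,e_j)$ with multiplicity $\beta_j$, and then one still must replace $((|\alpha|+|\beta|)!)^{1/2}$ by $C^{|\alpha|+|\beta|}(\alpha!)^{1/2}(\beta!)^{1/2}$. This uses the elementary estimate
\bes
	(|\alpha|+|\beta|)!
	=
	\binom{|\alpha|+|\beta|}{|\alpha|}\,|\alpha|!\,|\beta|!
	\leq
	2^{|\alpha|+|\beta|}\,d^{|\alpha|}\alpha!\,d^{|\beta|}\beta!
	=
	(2d)^{|\alpha|+|\beta|}\,\alpha!\,\beta!
	,
\ees
yielding $C=\sqrt{2d}\,C_0$ (or $2\sqrt{d}\,C_0$ as in the paper). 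That short multinomial argument is the actual content of the paper's proof and should appear explicitly.
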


\begin{proof}
	We set $Y_k = (e_k,0)$, $k\in\cI$, and $Y'_j = (0,e_j)$, $j\in\cJ$, where $e_l \in \RR^d$ is the $l$-th unit vector in $\RR^d$.
	Denote by  $D_{Y_k}$ and $D_{Y'_j}$ the Weyl quantization of the symbols $q(x,\xi) = e_k\cdot x $ and $q(x,\xi) = e_j\cdot \xi $, respectively,
	i.e.~$D_{Y_k} = x_k$ and $D_{Y'_j} = -\ii\partial_j$,

	Under the imposed assumptions, \cite[Theorem~2.6]{AlphonseB-21} implies that there are constants $C_0 > 0$ and $t_0 \in (0,1)$
	such that for all $m \in \NN$ and $0 < t < t_0$ we have
	\be\label{eq:smoothing-properties-differential-expressions}
		\norm{D_{Y^1}\dots D_{Y^m}\cT(t)g}_{L^2(\RR^d)}
		\leq
		\frac{C_0^m}{t^{mk_0+m/2}}(m!)^{1/2}\norm{g}_{L^2(\RR^d)}
		.
	\ee
	Here each of the $Y^1,\dots,Y^m$  can be any of the
	vectors $\{(e_k,0)_{k\in\cI},(0,e_j)_{j\in\cJ}\}$ forming a basis of $S(A)^\perp = \RR^d_\cI \times \RR^d_{\cJ}$.

	Let $\alpha\in\NN_{0,\cI}^d$ and $\beta\in\NN_{0,\cJ}^d$.
	For each $k\in\cI$, we take $\alpha_k$-times the vector $Y_k$, and, similarly, $\beta_j$-times the vector $Y'_j$ for each $j\in\cJ$.
	In total, these are $m = |\alpha| + |\beta|$ many vectors.
	Hence \eqref{eq:smoothing-properties-differential-expressions} implies
	\eqs{
		\norm{x^\alpha \partial_x^\beta \cT(t)g}_{L^2(\RR^d)}
		&\leq
		\frac{C_0^{|\alpha| + |\beta|}}{t^{(|\alpha| + |\beta|)(k_0+1/2)}}((|\alpha| + |\beta|)!)^{1/2}\norm{g}_{L^2(\RR^d)}
		\\
		&\leq
		\frac{C^{|\alpha| + |\beta|}}{t^{(|\alpha| + |\beta|)(k_0+1/2)}}(\alpha!)^{1/2}(\beta!)^{1/2}\norm{g}_{L^2(\RR^d)}
		,
	}
	where $C = 2\sqrt{d}C_0$.
\end{proof}%

Note that inequality~\eqref{eq:general-smoothing} shows that the singular space
encodes the directions in which one expects a certain decay of the function $\cT(t)g$ for fixed $t > 0$ and $g \in L^2(\RR^d)$.

\subsection{Proof of the dissipation estimate}

We now show that Theorem~\ref{thm:general-smoothing} implies a version of Proposition~\ref{prop:smoothing-harmonic-oscillator} for the partial harmonic
oscillator. To this end, let $\cI,\cJ \subset \{ 1,\dots,d \}$, and consider the
operator $H_{\cI,\cJ}$ corresponding to the
differential expression $-\Delta_\cJ+|x_\cI|^2=-\sum_{j\in\cJ}\partial_j^2+\sum_{i\in\cI}x_i^2$ defined via quadratic forms, see Appendix~\ref{sec:partharmOsc}.
In view of inequality \eqref{eq:general-smoothing} we single out the following class of \emph{partially Schwartz functions}
\[
	\cG_{\cI,\cJ}
	:=
	\{ f \in L^2(\RR^d) \colon x^\alpha \partial^\beta f \in L^2(\RR^d)\ \forall \alpha \in \NN_{0,\cI}^d,\, \beta \in \NN_{0,\cJ}^d \}
	,
\]
and denote $l := \abs{\cI \cap \cJ}\leq d$. In view of Theorem~\ref{thm:general-smoothing}
the assumptions of the following lemma are natural.

\begin{lemma}\label{lem:technical-lemma-dissipation}
	Let $D_1,D_2 > 0$ be constants, and suppose that $f \in \cG_{\cI,\cJ}$ satisfies
	\be\label{eq:assumption-D1D2}
		\norm{x^\alpha \partial^\beta f}_{L^2(\RR^d)}
		\leq
		D_1 D_2^{\abs{\alpha}+\abs{\beta}} (\alpha!)^{1/2} (\beta!)^{1/2}
		\text{ for all }
		\alpha \in \NN_{0,\cI}^d, \beta \in \NN_{0,\cJ}^d
		.
	\ee
	Then, for $s \leq 1/(40\euler\cdot 2^{d}d D_2^2)$ we have $f \in \cD(\euler^{sH_{\cI,\cJ}})$ and
	\[
		\norm{\euler^{sH_{\cI,\cJ}}f}_{L^2(\RR^d)}
		\leq
		2\Bigl(\frac{2}{3}\Bigr)^l D_1
		\leq
		2 D_1		.
	\]
\end{lemma}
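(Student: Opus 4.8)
The plan is to show that $f$ is an \emph{analytic vector} for the self-adjoint operator $H_{\cI,\cJ}$ and to read off the bound on $\norm{\euler^{sH_{\cI,\cJ}}f}_{L^2(\RR^d)}$ from the resulting power series. The operator $H_{\cI,\cJ}$ is the self-adjoint realisation of the non-negative quadratic differential expression $-\sum_{j\in\cJ}\partial_j^2 + \sum_{i\in\cI}x_i^2$, so each power $H_{\cI,\cJ}^N$ is again a differential operator: expanding $\bigl(\sum_{j\in\cJ}(-\partial_j^2) + \sum_{i\in\cI}x_i^2\bigr)^N$ and normal-ordering (pushing all multiplications to the left of all derivatives via $\partial_k x_k = x_k\partial_k + 1$) yields a finite sum
\[
	H_{\cI,\cJ}^N
	=
	\sum_{\alpha\in\NN_{0,\cI}^d,\ \beta\in\NN_{0,\cJ}^d}
	c^{(N)}_{\alpha,\beta}\, x^\alpha\partial^\beta
	,
	\qquad
	\abs{\alpha}+\abs{\beta}\le 2N
	,
\]
with explicit combinatorial coefficients $c^{(N)}_{\alpha,\beta}$. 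Crucially, all monomials occurring here are of precisely the type to which hypothesis~\eqref{eq:assumption-D1D2} applies (powers of $x_i$ only involve $i\in\cI$, those of $\partial_j$ only $j\in\cJ$), so the triangle inequality together with \eqref{eq:assumption-D1D2} gives
\[
	\norm{H_{\cI,\cJ}^N f}_{L^2(\RR^d)}
	\le
	D_1 \sum_{\alpha,\beta} \abs{c^{(N)}_{\alpha,\beta}}\, D_2^{\abs{\alpha}+\abs{\beta}} (\alpha!)^{1/2}(\beta!)^{1/2}
	.
\]
The first task is to estimate this sum; using the elementary bound $((2m)!)^{1/2}\le 2^m m!$ and the multinomial structure of the leading (contraction-free) coefficients, together with suitable control of the lower-order commutator contributions, I would expect an estimate of the form $\norm{H_{\cI,\cJ}^N f}_{L^2(\RR^d)}\le D_1\, (c\, d\, D_2^2)^N N!\, P_d(N)$ for a universal constant $c$ and a polynomial $P_d$ of degree $O(d)$ accounting for the number of contributing multi-indices.

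Granting such a bound, $f$ is an analytic vector for $H_{\cI,\cJ}$ as soon as $s$ is below the radius of convergence, and the standard argument for self-adjoint $H_{\cI,\cJ}\ge 0$ then yields $f\in\cD(\euler^{sH_{\cI,\cJ}})$ together with the series representation $\euler^{sH_{\cI,\cJ}}f = \sum_{N\ge 0}\frac{s^N}{N!}H_{\cI,\cJ}^N f$, converging in $L^2(\RR^d)$; in particular $\norm{\euler^{sH_{\cI,\cJ}}f}_{L^2(\RR^d)} \le \sum_{N\ge 0}\frac{s^N}{N!}\norm{H_{\cI,\cJ}^N f}_{L^2(\RR^d)}$. (Equivalently, one bounds $\norm{\euler^{sH_{\cI,\cJ}}\indic_{[0,R]}(H_{\cI,\cJ})f}_{L^2(\RR^d)}$ uniformly in $R$ via $\euler^{2s\lambda}=\sum_N(2s\lambda)^N/N!$ and monotone convergence, which simultaneously establishes membership in the domain.) It then remains to insert the bound on $\norm{H_{\cI,\cJ}^N f}_{L^2(\RR^d)}$ and to sum: for $s\le 1/(40\euler\cdot 2^d d D_2^2)$ the ratio $c\,d\,s\,D_2^2$ is so small that $\sum_N (c\,d\,s\,D_2^2)^N$ and its derivatives (which absorb the factor $P_d(N)$) converge with a wide margin, and a careful evaluation — using identities such as $\sum_{j=0}^N\binom Nj (N-j)!\,j! = (N+1)N!$ and $\sum_N (N+1)r^N = (1-r)^{-2}$ with $r$ small, and keeping track of the $l=\abs{\cI\cap\cJ}$ coordinate directions in which \emph{both} a power of $x_k$ and a power of $\partial_k$ may appear — produces the stated bound $2(2/3)^l D_1$.

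The main obstacle will be the bookkeeping of the coefficients $c^{(N)}_{\alpha,\beta}$, and in particular the $l$ directions $k\in\cI\cap\cJ$: there $x_k^2$ and $-\partial_k^2$ do not commute, so normal-ordering spawns a cascade of lower-order terms, and one must verify both that summing these over all $N$ and all multi-indices does not destroy convergence and that it in fact improves the constant to $2(2/3)^l D_1$; these are the only directions contributing the $l$-dependent factor, since for $k\in(\cI\cup\cJ)\setminus(\cI\cap\cJ)$ only $x_k$ or only $\partial_k$ occurs, $\euler^{sx_k^2}$ resp.\ $\euler^{-s\partial_k^2}$ is literally a power series in $x_k^2$ resp.\ $\partial_k^2$, and \eqref{eq:assumption-D1D2} applies term by term. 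Finally, keeping the $d$-dependence of the admissible range for $s$ sharp enough for the polynomial multiplicity factor $P_d(N)$ to be beaten by the geometric decay is exactly what forces the $2^d d$ in the threshold.
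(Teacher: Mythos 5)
Your overall plan — regard $f$ as an analytic vector for $H_{\cI,\cJ}$, bound $\norm{H_{\cI,\cJ}^N f}$ via normal-ordering and hypothesis~\eqref{eq:assumption-D1D2}, and sum the exponential series — is the correct strategy and matches the paper's. However, the proposal stops short precisely at what you yourself flag as ``the main obstacle'': you write ``I would expect an estimate of the form $\norm{H_{\cI,\cJ}^Nf}\le D_1(cdD_2^2)^N N!\,P_d(N)$'' and then proceed ``granting such a bound'', and later ``one must verify \dots that it in fact improves the constant to $2(2/3)^l D_1$.'' That verification is not a loose end; it is the entire content of the lemma, and you do not supply the mechanism that tames the normal-ordering cascade in the $l$ directions of $\cI\cap\cJ$.

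The paper closes this gap by a different organisation of the expansion that you should be aware of. Rather than normal-ordering the full power $H_{\cI,\cJ}^N$, it first shifts to $H_{\cI,\cJ}+l$ and splits the symbol into three pairwise commuting pieces acting on $\cG_{\cI,\cJ}$: $T_1=-\Delta_{\cI\cap\cJ}+\abs{x_{\cI\cap\cJ}}^2$ (where the non-commutativity lives), $T_2=-\Delta_{\cJ\setminus\cI}$, and $T_3=\abs{x_{\cI\setminus\cJ}}^2$. Because these commute, the multinomial theorem gives $(H_{\cI,\cJ}+l)^n=\sum\binom{n}{\nu}(T_1+l)^{\nu_1}T_2^{\nu_2}T_3^{\nu_3}$, and $T_2^{\nu_2}T_3^{\nu_3}$ expands trivially without any commutator terms. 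All the combinatorial difficulty is then isolated in $(T_1+l)^{\nu_1}$, for which the paper invokes the explicit identity and coefficient bound of Martin and Pravda-Starov (\cite[Eq.~(4.9)\,and\,(4.11)]{MartinPS-20-arxiv}), namely $(T_1+l)^{\nu_1}=\sum_{\abs{\gamma+\delta}\le 2\nu_1}c^{(\nu_1)}_{\gamma,\delta}x^\gamma\partial^\delta$ with $\abs{c^{(\nu_1)}_{\gamma,\delta}}\le 3^{2\nu_1-l}l^{\nu_1}(2\nu_1)^{(2\nu_1-\abs{\gamma+\delta})/2}$. It is the factor $3^{-l}$ in this bound that, after summation, produces the $(2/3)^l$ in the conclusion; this is not something one obtains by ``careful evaluation'' of generic normal-ordering coefficients without an identity of this precision. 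The shift by $l$ is essential for the clean form of this identity and is removed only at the very end via $\norm{\euler^{sH_{\cI,\cJ}}f}\le\norm{\euler^{s(H_{\cI,\cJ}+l)}f}$. In short: the approach is right, but the decisive step — the decomposition into commuting blocks plus the quoted coefficient bound for the shifted harmonic oscillator — is missing, and without it the claimed constants (in particular the factor $(2/3)^l$ and the threshold $1/(40\euler\cdot 2^d d D_2^2)$) are not substantiated.
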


\begin{proof}
	On $\cG_{\cI,\cJ}$ define the differential expressions $T_j$, $j \in \{ 1,2,3 \}$, with
	\[
		T_1g = (-\Delta_{\cI \cap \cJ} + \abs{x_{\cI \cap \cJ}}^2)g,\quad
		T_2g = -\Delta_{\cJ \setminus \cI}g,\quad
		T_3g = \abs{x_{\cI \setminus \cJ}}^2g
	\]
	for $g \in \cG_{\cI,\cJ}$. By Lemma~\ref{lem:reprHamiltonian}, we have
	\[
		(H_{\cI,\cJ}+l)f
		=
		(T_1 + l)f + T_2f + T_3f
		.
	\]
	Since the $T_j$ leave $\cG_{\cI,\cJ}$ invariant and commute pairwise, this gives for $n \in \NN_0$
	\bes
		(H_{\cI,\cJ}+l)^nf
		=
		\sum_{\substack{\abs{\nu}=n\\ \nu \in \NN_0^3}} \binom{n}{\nu}  (T_1+l)^{\nu_1} T_2^{\nu_2} T_3^{\nu_3}f
		.
	\ees
	We have
	\[
		T_2^{\nu_2}T_3^{\nu_3}f
		=
		(-1)^{\nu_2} \sum_{\substack{|\beta| = \nu_2 \\ \beta \in \NN_{0,\cJ\setminus\cI}^d}}
		\sum_{\substack{|\alpha| = \nu_3\\\alpha\in\NN_{0,\cI\setminus\cJ}^d}}
		\binom{\nu_2}{\beta}\binom{\nu_3}{\alpha} x^{2\alpha} \partial_x^{2\beta}f
		\in
		\cG_{\cI,\cJ}(\RR^d)
		.
	\]
	Moreover, we recall from \cite[Eq.~(4.9) and (4.11)]{MartinPS-20-arxiv} that
	\be\label{eq:identity-harmonic-oscillator}
		(T_1 + l)^{\nu_1} g
		=
		\sum_{\substack{|\gamma+\delta|\leq 2 \nu_1\\\gamma,\delta\in\NN_{0,\cI\cap\cJ}^d}} c_{\gamma,\delta}^{(\nu_1)} x^\gamma\partial^\delta_xg
		,
		\quad
		g\in\cG_{\cI,\cJ}
		,
	\ee
	where the coefficients satisfy the bound
	\be\label{eq:coefficients-identity-harmonic-oscillator}
		|c_{\gamma,\delta}^{(\nu_1)}|
		\leq
		3^{2\nu_1-l}l^{\nu_1}(2\nu_1)^{(2\nu_1-|\gamma+\delta|)/2}
		,
		\quad
		|\gamma+\delta|
		\leq
		2 \nu_1
		.
	\ee
	Hence, inserting $g = T_2^{\nu_2}T_3^{\nu_3}f$ in formula \eqref{eq:identity-harmonic-oscillator} and using
	the triangle inequality for operator norms we are left with estimating
	\eqs{
		\norm{(T_1+&l)^{\nu_1}T_2^{\nu_2}T_3^{\nu_3}f}\\
		&\leq
		\sum_{\substack{|\gamma+\delta|\leq 2 \nu_1\\\gamma,\delta\in\NN_{0,\cI\cap\cJ}^d}}
		\sum_{\substack{|\beta| = \nu_2 \\ \beta \in \NN_{0,\cJ\setminus\cI}^d}}
		\sum_{\substack{|\alpha| = \nu_3\\\alpha\in\NN_{0,\cI\setminus\cJ}^d}}
		|c_{\gamma,\delta}^{(\nu_1)}|
		\binom{\nu_2}{\beta}\binom{\nu_3}{\alpha}
		\norm{x^{\gamma+2\alpha} \partial_x^{\delta+2\beta}f}
		.
	}
	Note that we can apply the hypothesis \eqref{eq:assumption-D1D2}
	for each summand separately with $\gamma+2\alpha\in\NN_{0,\cI}^d$ and $\delta+2\beta\in\NN_{0,\cJ}^d$.
	Hence, using also \eqref{eq:coefficients-identity-harmonic-oscillator}, we get
	\begin{multline*}
		|c_{\gamma,\delta}^{(\nu_1)}|\cdot\norm{x^{\gamma+2\alpha} \partial_x^{\delta+2\beta}f}
		\\
		\leq
		3^{2\nu_1-l}l^{\nu_1}(2\nu_1)^{(2\nu_1-|\gamma+\delta|)/2} D_1 D_2^{|\gamma+2\alpha|+|\delta+2\beta|} ((\gamma+2\alpha)!)^{1/2}((\delta+2\beta)!)^{1/2}
.
	\end{multline*}
	We further estimate this term using the simple inequality $\zeta! \leq |\zeta|^{|\zeta|}$ for multiindices $\zeta$,
	$|\gamma+2\alpha|+|2\beta+\delta|\leq 2n$ and $2\nu_1-|\gamma+\delta|+|\gamma+2\alpha|+|\delta+2\beta| = 2n$.
	Combining this with $(2n)^n\leq (2\euler)^nn!$ yields
	\eqs{
		3^{2\nu_1-l}&l^{\nu_1}(2\nu_1)^{(2\nu_1-|\gamma+\delta|)/2} D_1 D_2^{|\gamma+2\alpha|+|\delta+2\beta|} ((\gamma+2\alpha)!)^{1/2}((\delta+2\beta)!)^{1/2}
		\\
		&\leq
		3^{2\nu_1-l}d^{\nu_1}D_1 (2\euler D_2^2)^nn!
		.
	}
	Noting also
	\[
		\sum_{\substack{|\gamma+\delta|\leq 2\nu_1\\\gamma,\delta\in\NN_{0,\cI\cap\cJ}^d}}
		\sum_{\substack{|\beta| = \nu_2 \\ \beta \in \NN_{0,\cJ\setminus\cI}^d}}
		\sum_{\substack{|\alpha| = \nu_3\\\alpha\in\NN_{0,\cI\setminus\cJ}^d}}
		\binom{\nu_2}{\beta}\binom{\nu_3}{\alpha}
		\leq
		(2\nu_1 + 1)^l d^{\nu_2+\nu_3}
		\leq
		2^l2^{\nu_1 d}d^{\nu_2+\nu_3}
		,
	\]
	we finally derive
	\[
		\norm{ (T_1+l)^{\nu_1} T_2^{\nu_2}T_3^{\nu_3}f}
		\leq
		\Bigl(\frac{2}{3}\Bigr)^l D_1(9\euler\cdot 2^{d})^{\nu_1}(2\euler\cdot d D_2^2)^nn!
		.
	\]

	By the multinomial formula, we have thus shown
	\eqs{
		\norm{(H_{\cI,\cJ}+l)^nf}
		&\leq
		\sum_{\substack{\abs{\nu}=n\\ \nu \in \NN_0^3}}\binom{n}{\nu}\norm{ (T_1+l)^{\nu_1} T_2^{\nu_2}T_3^{\nu_3}f}
		\\
		&\leq
		\Bigl(\frac{2}{3}\Bigr)^l D_1(2\euler \cdot dD_2^2)^n(9\cdot 2^d+2)^n n!\\
		&\leq
		\Bigl(\frac{2}{3}\Bigr)^l D_1(20\euler\cdot 2^dD_2^2)^n n!
		.
	}
	Now, choose $1/s = 40\euler\cdot 2^{d}d D_2^2$.
	Then $f\in\cD(\euler^{s(H_{\cI,\cJ}+l)})$ and
	\[
		\norm{\euler^{s(H_{\cI,\cJ}+l)}f}
		\leq
		\sum_{n = 0}^\infty \frac{s^n}{n!}\norm{(H_{\cI,\cJ}+l)^n f}
		\\
		\leq
		2\cdot \Bigl(\frac{2}{3}\Bigr)^lD_1
		.
	\]
	It remains to observe that $f\in\cD(\euler^{s(H_{\cI,\cJ})})$ with
	$\norm{\euler^{sH_{\cI,\cJ}}f}\leq\norm{\euler^{s(H_{\cI,\cJ}+l)}f}$ by the spectral theorem.
\end{proof}%

The above lemma is the central tool in the proof of the next theorem, which is a generalization and sharpening of Proposition~\ref{prop:smoothing-harmonic-oscillator}.

\begin{theorem}\label{thm:L2-bound}
	Let $S(A)^\perp = \RR^d_{\cI}\times\RR^d_{\cJ}$ for some sets $\cI,\cJ \subset \{1,\dots,d\}$ and let
	$k_0$ be the rotation exponent from \eqref{eq:k_0}.
	Then we have for all $g \in L^2(\RR^d)$ that $\cT(t) g \in \cD(\euler^{ct^{2k_0+1}H_{\cI,\cJ}})$ and
	\bes
		\norm{\euler^{ct^{2k_0+1}H_{\cI,\cJ}}\cT(t)g}_{L^2(\RR^d)}
		\leq
		2\norm{g}_{L^2(\RR^d)}  \quad \text{ for all $0 < t < t_0$}
		.
	\ees
	Here $c = 1/(40\euler\cdot 2^{d}d C^2)$, and $C$ and $t_0 \in (0,1)$ are as in Theorem~\ref{thm:general-smoothing}.
\end{theorem}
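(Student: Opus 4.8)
The plan is to feed the Gelfand--Shilov-type bounds supplied by Theorem~\ref{thm:general-smoothing} directly into the abstract Lemma~\ref{lem:technical-lemma-dissipation}. Fix $g \in L^2(\RR^d)$ and $0 < t < t_0$, and set $f := \cT(t)g$. By Theorem~\ref{thm:general-smoothing}, $f \in \cG_{\cI,\cJ}$ and
\[
	\norm{x^\alpha \partial_x^\beta f}_{L^2(\RR^d)}
	\leq
	\frac{C^{|\alpha|+|\beta|}}{t^{(|\alpha|+|\beta|)(k_0+1/2)}}(\alpha!)^{1/2}(\beta!)^{1/2}\norm{g}_{L^2(\RR^d)}
	\qquad\text{for all } \alpha \in \NN_{0,\cI}^d,\ \beta \in \NN_{0,\cJ}^d .
\]
This is exactly hypothesis~\eqref{eq:assumption-D1D2} of Lemma~\ref{lem:technical-lemma-dissipation} with $D_1 = \norm{g}_{L^2(\RR^d)}$ and $D_2 = C\,t^{-(k_0+1/2)}$, since then $D_2^{|\alpha|+|\beta|} = C^{|\alpha|+|\beta|}\,t^{-(k_0+1/2)(|\alpha|+|\beta|)}$. (Consistency at $\alpha=\beta=0$ is just the contraction property $\norm{\cT(t)g}\leq\norm{g}$.)

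Next I would invoke Lemma~\ref{lem:technical-lemma-dissipation}: for every $s$ with $0 < s \leq 1/(40\euler\cdot 2^d d D_2^2)$ we get $f \in \cD(\euler^{sH_{\cI,\cJ}})$ and $\norm{\euler^{sH_{\cI,\cJ}}f}_{L^2(\RR^d)} \leq 2(2/3)^l D_1 \leq 2\norm{g}_{L^2(\RR^d)}$, where $l = |\cI\cap\cJ|$. It then only remains to compute
\[
	\frac{1}{40\euler\cdot 2^d d D_2^2}
	=
	\frac{t^{2k_0+1}}{40\euler\cdot 2^d d C^2}
	=
	c\,t^{2k_0+1} ,
\]
so the choice $s = c\,t^{2k_0+1}$ is admissible and yields $\cT(t)g = f \in \cD(\euler^{c t^{2k_0+1} H_{\cI,\cJ}})$ together with the claimed bound $\norm{\euler^{c t^{2k_0+1} H_{\cI,\cJ}}\cT(t)g}_{L^2(\RR^d)} \leq 2\norm{g}_{L^2(\RR^d)}$. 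Since $g \in L^2(\RR^d)$ and $t \in (0,t_0)$ were arbitrary, this completes the argument.

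There is no serious obstacle here: the substance lies in Theorem~\ref{thm:general-smoothing} (the smoothing estimate imported from \cite{AlphonseB-21}) and in the combinatorial Lemma~\ref{lem:technical-lemma-dissipation}. The only point that needs care is the bookkeeping of the $t$-dependence — one must track that the factor $t^{-(k_0+1/2)}$ per multiplication/differentiation enters $D_2^2$ and thereby produces precisely the exponent $2k_0+1$ matching the constant $c$ in the statement. One should also note that the admissible range $0 < t < t_0$ is inherited verbatim from Theorem~\ref{thm:general-smoothing}, and no further restriction on $t$ is introduced.
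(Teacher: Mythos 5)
Your proof is correct and takes essentially the same approach as the paper: feed the smoothing estimate from Theorem~\ref{thm:general-smoothing} into Lemma~\ref{lem:technical-lemma-dissipation} with $D_1=\norm{g}_{L^2(\RR^d)}$ and $D_2=Ct^{-(k_0+1/2)}$, then note that $s=c\,t^{2k_0+1}$ is the admissible threshold.
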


\begin{proof}
	We observe that inequality \eqref{eq:general-smoothing} in Theorem~\ref{thm:general-smoothing}  shows
	that for every $0 < t < t_0$ the function $f = \cT(t)g$ satisfies the hypotheses of Lemma~\ref{lem:technical-lemma-dissipation} with
	$D_1 = \norm{g}_{L^2(\RR^d)}$ and $D_2 = C t^{-(k_0+1/2)}$.
	Lemma~\ref{lem:technical-lemma-dissipation} therefore gives
	\bes
		\norm{\euler^{sH_{\cI,\cJ}}f}_{L^2(\RR^d)}
		\leq
		2\norm{g}_{L^2(\RR^d)}
	\ees
	for $s \leq 1/(40\euler\cdot 2^{d}d D_2^2)$.
	This shows
	\[
		\norm{\euler^{ct^{2k_0+1}H_{\cI,\cJ}}\cT(t)g}_{L^2(\RR^d)}
		\leq
		2\norm{g}_{L^2(\RR^d)}
		,
	\]
	where $c = 1/(40\euler\cdot 2^{d}d C^2)$.
\end{proof}%

We have now assembled all tools needed to prove
a generalized version of our Theorem~\ref{thm:dissipation},
i.e.~a dissipation estimate for small times for the projections
\bes
	P_\lambda = P_{(-\infty,\lambda]}(H_{\cI,\cJ}).
\ees

\begin{theorem}\label{thm:general-dissipation}
	Let $S(A)^\perp = \RR^d_{\cI} \times \RR^d_{\cJ}$ for some sets $\cI,\cJ \subset \{1,\dots,d\}$
	and let $k_0$ be the rotation exponent from \eqref{eq:k_0}.
	Then, with constants $C > 0$ and $t_0 \in (0,1)$ as in  Theorem~\ref{thm:general-smoothing}, we have
	\[
		\norm{(1-P_\lambda)\cT(t)g}_{L^2(\RR^d)}
		\leq
		2\euler^{-ct^{2k_0+1}\lambda}\norm{g}_{L^2(\RR^d)}
		,
		\quad c = (40\euler \cdot 2^d  d C^2)^{-1}
		,
	\]
	for all $g \in L^2(\RR^d)$, $0 < t < t_0$, and $\lambda \geq 0$.
\end{theorem}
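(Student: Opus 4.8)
The plan is to combine the exponential smoothing bound from Theorem~\ref{thm:L2-bound} with an elementary spectral-projection estimate for the self-adjoint operator $H_{\cI,\cJ}$. First I would observe that since $H_{\cI,\cJ}$ is non-negative and self-adjoint, the spectral theorem gives for any $h$ in the domain of $\euler^{sH_{\cI,\cJ}}$ and any $s>0$ the inequality
\[
	\norm{(1-P_\lambda)h}_{L^2(\RR^d)}^2
	=
	\int_{(\lambda,\infty)} \Diff{\norm{E_\mu h}^2}
	\leq
	\euler^{-2s\lambda}\int_{(\lambda,\infty)} \euler^{2s\mu}\Diff{\norm{E_\mu h}^2}
	\leq
	\euler^{-2s\lambda}\norm{\euler^{sH_{\cI,\cJ}}h}_{L^2(\RR^d)}^2
	,
\]
where $(E_\mu)$ denotes the spectral resolution of $H_{\cI,\cJ}$; here I used $\euler^{2s\mu}\geq \euler^{2s\lambda}$ on $(\lambda,\infty)$ and dropped the complementary part of the integral.

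Next I would apply this with $h = \cT(t)g$ and $s = ct^{2k_0+1}$, where $c = (40\euler\cdot 2^d d C^2)^{-1}$ is exactly the constant from Theorem~\ref{thm:L2-bound}. That theorem guarantees, for every $g\in L^2(\RR^d)$ and every $0<t<t_0$, that $\cT(t)g$ lies in $\cD(\euler^{ct^{2k_0+1}H_{\cI,\cJ}})$ and
\[
	\norm{\euler^{ct^{2k_0+1}H_{\cI,\cJ}}\cT(t)g}_{L^2(\RR^d)}
	\leq
	2\norm{g}_{L^2(\RR^d)}
	.
\]
Substituting into the spectral estimate yields
\[
	\norm{(1-P_\lambda)\cT(t)g}_{L^2(\RR^d)}
	\leq
	\euler^{-ct^{2k_0+1}\lambda}\cdot 2\norm{g}_{L^2(\RR^d)}
	,
\]
which is precisely the asserted bound, valid for all $0<t<t_0$ and $\lambda\geq 0$.

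There is no real obstacle here: the entire content has been front-loaded into Theorem~\ref{thm:general-smoothing} and its consequence Theorem~\ref{thm:L2-bound}. The only points to be careful about are the domain issue (one should note $\cT(t)g\in\cD(\euler^{sH_{\cI,\cJ}})$ so that the spectral integral manipulation is legitimate, which is supplied by Theorem~\ref{thm:L2-bound}) and the harmless case $\lambda=0$, where $1-P_0$ projects onto the strictly positive part of the spectrum and the bound reduces to $\norm{(1-P_0)\cT(t)g}\leq 2\norm{g}$, trivially true since $\norm{\cT(t)}\leq 1$. With these remarks the proof is a two-line computation.
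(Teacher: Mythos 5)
Your proof is correct and follows essentially the same route as the paper: both rely on Theorem~\ref{thm:L2-bound} together with the spectral theorem for $H_{\cI,\cJ}$, the only cosmetic difference being that the paper phrases the spectral step as an operator-norm bound $\norm{\euler^{-sH_{\cI,\cJ}}(1-P_\lambda)}\leq\euler^{-s\lambda}$ applied to the factorization $\cT(t)g=\euler^{-sH_{\cI,\cJ}}\euler^{sH_{\cI,\cJ}}\cT(t)g$, whereas you carry out the equivalent computation directly on the spectral integral.
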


In the particular case $\cJ = \{ 1,\dots,d \}$ this agrees with Theorem~\ref{thm:dissipation}.
The proof follows the strategy of \cite[Proposition~4.1]{BeauchardPS-18}.

\begin{proof}
	By Theorem~\ref{thm:L2-bound} we have $\cT(t)g\in\cD(\euler^{ct^{2k_0+1}H_{\cI,\cJ}})$ and
	\be\label{eq:L2-bound-weighted-all-t}
		\norm{\euler^{ct^{2k_0+1}H_{\cI,\cJ}}\cT(t)g}_{L^2(\RR^d)}
		\leq
		2\norm{g}_{L^2(\RR^d)} \quad \text{ for $0 < t < t_0$.}
	\ee
	For those $t$, we therefore have
	\[
		\cT(t)g
		=
		\euler^{-ct^{2k_0+1}H_{\cI,\cJ}}\euler^{ct^{2k_0+1}H_{\cI,\cJ}}\cT(t)g
		.
	\]
	Moreover, the projections $P_\lambda$ and the operator $\euler^{-ct^{2k_0+1}H_{\cI,\cJ}}$ commute, so that the
	previous identity and the spectral theorem imply
	\eqs{
		\norm{(1-P_\lambda)\cT(t)g}
		&=
		\norm{\left[\euler^{-ct^{2k_0+1}H_{\cI,\cJ}} (1-P_\lambda)\right]\euler^{ct^{2k_0+1}H_{\cI,\cJ}}\cT(t)g}\\
		&\leq
	\norm{\left[\euler^{-ct^{2k_0+1}H_{\cI,\cJ}} (1-P_\lambda)\right]} \cdot \norm{\euler^{ct^{2k_0+1}H_{\cI,\cJ}}\cT(t)g}\\
		&\leq
	   2\euler^{-ct^{2k_0+1}\lambda} \cdot \norm{g}_{L^2(\RR^d)} \quad \text{ for $0 < t < t_0$,}
	}
	where we used inequality \eqref{eq:L2-bound-weighted-all-t} in the last line.
\end{proof}%

\subsection{Crooked singular spaces}\label{ssec:crooked}

The above considerations can also be used to treat
more general singular spaces of the form
$S(q)^\perp = V \times W$, where $V,W \subset \RR^d$ are vector spaces of
dimensions $d_1 = \dim V$ and $d_2 = \dim W$.
Indeed, in this case, there is an orthogonal transformation $\cR \colon \RR^d \to \RR^d$ such that
\be\label{eq:rotation-dissipation}
	\cR V
	=
	\RR^d_\cI
	\quad\text{and}\quad
	\cR W
	=
	\RR^d_\cJ
\ee
with
\[
	\cI
	=
	\{1,\dots,d_1\}
	\quad\text{and}\quad
	\cJ
	=
	\{d_1-l+1,\dots,d_1+d_2-l\}
	,
\]
where $l = \dim (V \cap W)$.
Then the singular space of the form $\tilde{q}$ given by $\tilde{q}(x,\xi) = q(\cR^{-1} x,\cR^{-1} \xi)$
for all $x,\xi\in\RR^d$ is characterized by
$S(\tilde{q})^\perp = \RR^d_{\cI} \times \RR^d_{\cJ}$,
and the accretive operators $A$ and $\tilde{A}$ associated with $q$ and $\tilde{q}$,
respectively, by the Weyl quantizations satisfy $A = \cU_\cR \tilde{A} \cU_\cR^{-1}$ where $\cU_\cR f = f\circ\cR$.
Using this construction we derive the following general result.

\begin{corollary}\label{cor:rotated-dissipation}
	Let $S(A)^\perp = V\times W$, and let $\cR$ be
	as in \eqref{eq:rotation-dissipation}.
	Then, with
	$P_\lambda = \cU_\cR P_{(-\infty,\lambda]}(H_{\cI,\cJ}) \cU_\cR^{-1}$,
	we have
	\[
		\norm{(1-P_\lambda)\cT(t)g}_{L^2(\RR^d)}
		\leq
		2\euler^{-ct^{2k_0+1}\lambda}\norm{g}_{L^2(\RR^d)}
	\]
	for all $g \in L^2(\RR^d)$, $0 < t < t_0$, and $\lambda \geq 0$.
\end{corollary}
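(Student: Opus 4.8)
The plan is to reduce the assertion to Theorem~\ref{thm:general-dissipation} by means of the unitary change of variables $\cU_\cR$. Since $\cR$ is orthogonal, $\cU_\cR\colon f\mapsto f\circ\cR$ is unitary on $L^2(\RR^d)$, and, as recorded in the discussion around \eqref{eq:rotation-dissipation}, one has $A = \cU_\cR\tilde A\cU_\cR^{-1}$, where $\tilde A = \tilde q^{\w}$ is the quadratic differential operator with symbol $\tilde q(x,\xi) = q(\cR^{-1}x,\cR^{-1}\xi)$. Hence the semigroup generated by $\tilde A$ is $\tilde\cT(t) = \cU_\cR^{-1}\cT(t)\cU_\cR$, $t\ge 0$, and it suffices to prove the claimed bound for $\tilde A$, $\tilde\cT$, and $\tilde P_\lambda := P_{(-\infty,\lambda]}(H_{\cI,\cJ})$, because $1-P_\lambda = \cU_\cR(1-\tilde P_\lambda)\cU_\cR^{-1}$ and $\cU_\cR$ is unitary.

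First I would check that $\tilde q$ meets the hypotheses of Theorem~\ref{thm:general-dissipation}. Composition with a real linear map commutes with taking real parts, so $\Re\tilde q = (\Re q)\circ(\cR^{-1}\oplus\cR^{-1})\le 0$; thus $\tilde A$ falls within the standing assumptions. The block matrix $\cR\oplus\cR$ on $\RR^{2d}$ is simultaneously orthogonal and symplectic (the latter because $\cR^{\top}\cR = \Id$), so the Hamilton map of $\tilde q$ is $\tilde F = (\cR\oplus\cR)F(\cR\oplus\cR)^{-1}$. Since $\cR$ is real, $\Re\tilde F = (\cR\oplus\cR)(\Re F)(\cR\oplus\cR)^{-1}$ and $\Im\tilde F = (\cR\oplus\cR)(\Im F)(\cR\oplus\cR)^{-1}$, whence $\ker[\Re\tilde F(\Im\tilde F)^j] = (\cR\oplus\cR)\ker[\Re F(\Im F)^j]$ for every $j$. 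Intersecting over $j$ shows that $S(\tilde q) = (\cR\oplus\cR)S(q)$, that $\tilde q$ and $q$ share the rotation exponent $k_0$ of \eqref{eq:k_0}, and that --- using orthogonality of $\cR\oplus\cR$ to commute the orthogonal complement past the map --- $S(\tilde q)^\perp = (\cR\oplus\cR)(V\times W) = \cR V\times\cR W = \RR^d_\cI\times\RR^d_\cJ$ by \eqref{eq:rotation-dissipation}.

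With these preliminaries, Theorem~\ref{thm:general-dissipation} applies to $\tilde A$ and produces constants $c, t_0 > 0$ with
\[
	\norm{(1-\tilde P_\lambda)\tilde\cT(t)h}_{L^2(\RR^d)}\le 2\,\euler^{-ct^{2k_0+1}\lambda}\norm{h}_{L^2(\RR^d)}
\]
for all $h\in L^2(\RR^d)$, $0<t<t_0$, and $\lambda\ge 0$. Applying this with $h = \cU_\cR^{-1}g$ and using $1-P_\lambda = \cU_\cR(1-\tilde P_\lambda)\cU_\cR^{-1}$ and $\cT(t) = \cU_\cR\tilde\cT(t)\cU_\cR^{-1}$ together with the unitarity of $\cU_\cR$ gives
\[
	\norm{(1-P_\lambda)\cT(t)g}_{L^2(\RR^d)} = \norm{(1-\tilde P_\lambda)\tilde\cT(t)\cU_\cR^{-1}g}_{L^2(\RR^d)}\le 2\,\euler^{-ct^{2k_0+1}\lambda}\norm{g}_{L^2(\RR^d)},
\]
which is the assertion. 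The only non-formal ingredients are the operator identity $A = \cU_\cR\tilde A\cU_\cR^{-1}$ and the covariance of the singular space and of the rotation exponent under $\cR$; the former is already supplied in the text preceding the corollary, and the latter follows at once from $\cR\oplus\cR$ being orthogonal and symplectic, so I do not expect any real obstacle beyond this bookkeeping.
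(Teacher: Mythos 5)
Your proposal is correct and is essentially the argument the paper sketches in the paragraph preceding the corollary: conjugate by the unitary $\cU_\cR$, check that $\tilde q$ satisfies the hypotheses of Theorem~\ref{thm:general-dissipation}, and transport the estimate back. You fill in the covariance details (that $\cR\oplus\cR$ is both orthogonal and symplectic, hence $\tilde F = (\cR\oplus\cR)F(\cR\oplus\cR)^{-1}$, the equivariance of the singular space and of the rotation exponent $k_0$, and the preservation of $\Re q\le 0$) that the paper leaves implicit, but the route is the same.
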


\begin{remark}
	Note that there are also quadratic forms whose singular space does not satisfy $S(q)^\perp = V \times W$.
	Consider, e.g., the form $q(x,\xi) = -(x+\xi)^2$ on $\RR^2$ with singular space $S(q) = \{r\cdot (1,-1)^\top\colon r\in\RR\}$.
	Such forms are not covered by Corollary~\ref{cor:rotated-dissipation}.
\end{remark}
%
%
\section{Spectral inequality}\label{sec:spectral-inequality}

In this section, we prove a spectral inequality for the operator $H_\cI = H_{\cI,\cJ}$ with $\cJ = \{ 1,\dots,d \}$ and
$\cI \neq \emptyset$. To this end, we generalize the arguments in \cite{DickeSV-21}, where the special case $\cI = \{ 1,\dots,d \}$ was treated.

Without loss of generality, we may reorder the coordinates of $\RR^d$
such that we have $\cI = \{ 1,\dots,d_1 \}$ with some $1 \leq d_1 \leq d$.
Set $d_2 := d - d_1$.
We introduce the operators $H_1$ and $H_2$ corresponding to the differential
expressions
\[
	-\Delta + \abs{x}^2 \quad \text{in}\quad L^2(\RR^{d_1}),\quad
	-\Delta \quad \text{in}\quad L^2(\RR^{d_2})
	,
\]
respectively, via their quadratic forms; cf.\ Appendix~\ref{sec:partharmOsc}. In other words, $H_1$ is the harmonic
oscillator in $L^2(\RR^{d_1})$ and $-H_2$ is the pure Laplacian on $L^2(\RR^{d_2})$.

If $d_2=0$, we just have $H_\cI=H_1$. On the other hand, for $d_2 \neq 0$ we have by Lemma~\ref{lem:reprH},
Corollary~\ref{cor:specH} and Remark~\ref{rem:truePartHarm} that
\[
	H_\cI
	=
	H_1 \otimes I_2 + I_1 \otimes H_2
	\quad\text{and}\quad
	\sigma(H)
	=
	\sigma(H_1) + \sigma(H_2)
	\subset
	[d_1,\infty)
	,
\]
where for the latter we used that $\sigma(H_1) \subset [d_1,\infty)$ and $\sigma(H_2) = [0,\infty)$;
recall that $H_1$ has pure point spectrum with eigenvalues $2k+d_1$, $k \in \NN_0$.
Moreover, by Corollary~\ref{cor:specfamH}
every $f \in \Ran P_{(-\infty,\lambda]}(H_\cI)$ can be extended to an analytic function on $\CC^d$. We denote this extension
again by $f$.

Throughout this section, let $(Q_k)_{k \in \cK}$ be any finite or countably infinite family of measurable
subsets $Q_k \subset \RR^d$ and $\kappa \geq 1$ such that
\be\label{eq:covering-general-assumption}
	\Bigl|\RR^d \setminus \bigcup_{k \in \cK} Q_k\Bigr|=0\qquad \text{and}\qquad
	\sum_{k \in \cK} \indic_{Q_k} (x)
	\leq
	\kappa
	\quad \text{for all}\ x \in\RR^d.
\ee
We say that $(Q_k)_{k \in \cK}$ is an \emph{essential covering} of $\RR^d$ of \emph{multiplicity at most $\kappa$}.

As a starting point, we derive exponential decay of elements of the spectral subspace $\Ran P_{(-\infty,\lambda]}(H_\cI)$,
$\lambda \geq 1$, in the growth directions of the potential, that is, the coordinates $\cI$. To this end, we recall that the elements of
$\Ran P_{(-\infty,\lambda]}(H_1)$ are finite linear combinations of the well-known Hermite functions, which exhibit an
exponential decay in terms of a weighted $L^2$-estimate, see for instance \cite[Proposition~3.3]{BeauchardJPS-21}.
Using the tensor representation of $H_\cI$, we now obtain the
following result.

\begin{lemma}\label{lem:decay}
	For all $f \in \Ran P_{(-\infty,\lambda]}(H_\cI)$, $\lambda \geq 1$, we have
	\bes
		\norm{\euler^{|x_\cI|^2/64d_1}f}_{L^2(\RR^d)}^2
		\leq
		2^{2(d_1+1)+\lambda}\norm{f}_{L^2(\RR^d)}^2
		.
	\ees
\end{lemma}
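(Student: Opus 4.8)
The plan is to reduce the $d$-dimensional estimate to a one-dimensional Hermite estimate via the tensor structure $H_\cI = H_1 \otimes I_2 + I_1 \otimes H_2$. First I would observe that $\Ran P_{(-\infty,\lambda]}(H_\cI)$ is contained in $\Ran P_{(-\infty,\lambda]}(H_1) \otimes L^2(\RR^{d_2})$: indeed, since $H_2 \geq 0$ we have $H_1\otimes I_2 \leq H_\cI$, so a spectral subspace of $H_\cI$ below $\lambda$ sits inside the corresponding spectral subspace of $H_1\otimes I_2$. Consequently it suffices to bound $\norm{\euler^{|x_\cI|^2/64d_1} f}^2$ for $f$ of the form $g \otimes h$ with $g \in \Ran P_{(-\infty,\lambda]}(H_1) \subset L^2(\RR^{d_1})$ and $h \in L^2(\RR^{d_2})$; by Fubini and density this factorizes as $\norm{\euler^{|y|^2/64d_1} g}_{L^2(\RR^{d_1})}^2 \cdot \norm{h}_{L^2(\RR^{d_2})}^2$, and the weighted factor on $h$ is trivial. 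So the whole statement collapses to the weighted $L^2$ bound for the $d_1$-dimensional harmonic oscillator, which by a further tensor decomposition $H_1 = \sum_{j=1}^{d_1} h_j$ (with $h_j$ the one-dimensional oscillator in variable $x_j$) and the inequality $\euler^{|y|^2/64d_1} \leq \prod_j \euler^{y_j^2/64d_1}$ reduces to a one-variable statement about Hermite functions.

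The core one-dimensional input is the weighted Hermite estimate recorded as \cite[Proposition~3.3]{BeauchardJPS-21}: for a finite linear combination $u = \sum_{k \leq n} a_k \phi_k$ of Hermite functions one controls $\norm{\euler^{c x^2} u}_{L^2(\RR)}$ by $C(n,c)\norm{u}_{L^2(\RR)}$ with an explicit exponential-in-$n$ constant when $c$ is below the critical threshold $1/2$ (here $c = 1/(64 d_1) \le 1/64$, comfortably subcritical). The key steps are then: (i) for $f \in \Ran P_{(-\infty,\lambda]}(H_1)$ in $L^2(\RR^{d_1})$, expand in the tensor Hermite basis $\phi_\alpha = \bigotimes_j \phi_{\alpha_j}$ with $|\alpha| := \sum \alpha_j \leq (\lambda - d_1)/2$, so each coordinate degree satisfies $\alpha_j \leq (\lambda-d_1)/2 \le \lambda/2$; (ii) apply the one-dimensional bound in each variable and multiply, collecting a constant of the form (per variable) $2^{\text{const}}$ raised to a power linear in the degree; (iii) arrange the bookkeeping so the product of the $d_1$ per-variable constants comes out as $2^{2(d_1+1)+\lambda}$.

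The main obstacle is purely quantitative: tracking the constants so that the final bound is exactly $2^{2(d_1+1)+\lambda}$ rather than some messier expression. This requires choosing the weight $1/(64d_1)$ precisely so that the per-coordinate weight $1/(64 d_1) \cdot$ (but one only needs it $\le$ some universal subcritical constant, since $d_1 \ge 1$), and then optimizing the form of the Hermite estimate — likely using a clean bound such as $\norm{\euler^{x^2/64} \phi_k}_{L^2(\RR)} \le 2\cdot 2^{k/2}$ or similar, and summing the finitely many coefficients via Cauchy–Schwarz with $\#\{\alpha : |\alpha| \le m\}$ absorbed into the geometric factor. The analytic content is entirely off-loaded to \cite{BeauchardJPS-21} and to the tensor/spectral-theorem facts (Lemma~\ref{lem:reprH}, Corollary~\ref{cor:specH}, Remark~\ref{rem:truePartHarm}) already invoked in the text; what remains is to package it so the stated exponent emerges, which is routine but needs care with the factor of $2$ and the role of $d_1$.
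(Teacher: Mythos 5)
Your overall idea---reduce to the harmonic oscillator factor via the tensor structure and quote the weighted Hermite bound of Beauchard--Jaming--Pravda-Starov---is the same as the paper's, but two of your reduction steps have genuine problems and you misread the strength of the cited input.

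First, the inference ``$H_1\otimes I_2 \le H_\cI$, hence $\Ran P_{(-\infty,\lambda]}(H_\cI)\subset \Ran P_{(-\infty,\lambda]}(H_1\otimes I_2)$'' is false as a general principle of operator monotonicity; an operator inequality $A\le B$ does not imply containment of the low-energy spectral subspaces. (It does hold here, but only because $H_1\otimes I_2$ and $I_1\otimes H_2$ strongly commute, so the joint spectral measure lives on $\{(\mu_1,\mu_2):\mu_2\ge 0\}$ and $\mu_1+\mu_2\le\lambda$ forces $\mu_1\le\lambda$. That commutativity is exactly what Corollary~\ref{cor:specfamH} packages.) Second, the reduction ``it suffices to treat $f=g\otimes h$ by Fubini and density'' does not close: the target is a quadratic norm inequality, and verifying it on pure tensors does not automatically extend it to finite sums $\sum_k g_k\otimes h_k$. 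The argument that actually works---and is the one the paper uses---is fiberwise: Corollary~\ref{cor:specfamH} gives $f(\cdot,y)\in\Ran P_{(-\infty,\lambda]}(H_1)$ for \emph{every} $y\in\RR^{d_2}$, one applies the weighted estimate in the $x$-slice for each fixed $y$, and then integrates in $y$. Finally, \cite[Proposition~3.3]{BeauchardJPS-21} is not a one-dimensional statement; it is the $d_1$-dimensional estimate
\[
\norm{\euler^{|\cdot|^2/(64d_1)}g}_{L^2(\RR^{d_1})}^2 \le 2^{2(d_1+1)+\lambda}\norm{g}_{L^2(\RR^{d_1})}^2
\]
for $g\in\Ran P_{(-\infty,\lambda]}(H_1)$, with the $d_1$-dependence in both the weight and the constant already built in. The per-coordinate re-derivation and Cauchy--Schwarz bookkeeping you propose is therefore unnecessary, and it is unlikely to reproduce the stated constant cleanly since a general element of $\Ran P_{(-\infty,\lambda]}(H_1)$ is not a product of one-variable Hermite expansions.
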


\begin{proof}
	Let $f \in \Ran P_{(-\infty,\lambda]}(H_\cI)$ with $\lambda \geq 1$. By Corollary~\ref{cor:specfamH}, we have
	$f(\cdot,y) \in \Ran_{(-\infty,\lambda]}(H_1)$ for all $y \in \RR^{d_2}$, and by \cite[Proposition~3.3]{BeauchardJPS-21}
	\[
		\norm{\euler^{|\cdot|^2/64d_1}f(\cdot,y)}_{L^2(\RR^{d_1})}^2
		\leq
		2^{2(d_1+1)+\lambda}\norm{f(\cdot,y)}_{L^2(\RR^{d_1})}^2 \quad \text{ for all $y \in \RR^{d_2}$}
		.
	\]
	The claim now follows by integration over $y \in \RR^{d_2}$.
\end{proof}%

\begin{corollary}\label{cor:choice_Kc}
	Let $\lambda \geq 1$ and $C = 32d_1(1+\sqrt{\log\kappa})$.
	Then the subset $\cK_c := \{ k\in\cK \colon Q_k\cap (B(0,C\lambda^{1/2}) \times \RR^{d_2}) \neq \emptyset\}$ satisfies
	\[
		\sum_{k \in \cK_c^\complement} \norm{f}_{L^2(Q_k)}^2
		\leq
		\frac{1}{4}\norm{f}_{L^2(\RR^d)}^2
		\quad\text{for all}\quad
		f \in \Ran P_{(-\infty,\lambda]}(H_\cI)
		.
	\]
\end{corollary}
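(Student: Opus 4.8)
The plan is to combine the Gaussian decay of spectral subspace elements in the $\cI$-directions from Lemma~\ref{lem:decay} with the bounded-overlap property of the covering $(Q_k)_{k\in\cK}$ from \eqref{eq:covering-general-assumption}. The geometric observation driving the argument is that for $k \in \cK_c^\complement$ the set $Q_k$ is disjoint from the cylinder $B(0,C\lambda^{1/2}) \times \RR^{d_2}$, which forces $\abs{x_\cI} \geq C\lambda^{1/2}$ for every $x = (x_\cI,x_{\cI^\complement}) \in Q_k$; in particular $\euler^{-\abs{x_\cI}^2/(32d_1)} \leq \euler^{-C^2\lambda/(32d_1)}$ on $Q_k$.

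Given this, I would insert and remove the Gaussian weight and estimate, for every $k \in \cK_c^\complement$,
\[
	\norm{f}_{L^2(Q_k)}^2
	=
	\int_{Q_k} \euler^{-\abs{x_\cI}^2/(32d_1)}\,\euler^{\abs{x_\cI}^2/(32d_1)} \abs{f(x)}^2 \Diff{x}
	\leq
	\euler^{-C^2\lambda/(32d_1)} \int_{Q_k} \euler^{\abs{x_\cI}^2/(32d_1)} \abs{f(x)}^2 \Diff{x},
\]
and then sum over $k \in \cK_c^\complement$. Using $\sum_{k\in\cK} \indic_{Q_k} \leq \kappa$, the sum of the integrals on the right is at most $\kappa \int_{\RR^d} \euler^{\abs{x_\cI}^2/(32d_1)} \abs{f(x)}^2 \Diff{x} = \kappa \norm{\euler^{\abs{x_\cI}^2/(64d_1)} f}_{L^2(\RR^d)}^2$, which by Lemma~\ref{lem:decay} is bounded by $\kappa\, 2^{2(d_1+1)+\lambda} \norm{f}_{L^2(\RR^d)}^2$. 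Altogether this yields
\[
	\sum_{k \in \cK_c^\complement} \norm{f}_{L^2(Q_k)}^2
	\leq
	\kappa\, 2^{2(d_1+1)+\lambda}\, \euler^{-C^2\lambda/(32d_1)}\, \norm{f}_{L^2(\RR^d)}^2 .
\]

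It then remains to verify that the choice $C = 32d_1(1+\sqrt{\log\kappa})$ makes the scalar prefactor at most $1/4$, i.e.\ that $\log\kappa + (2d_1 + 4 + \lambda)\log 2 \leq C^2\lambda/(32d_1) = 32 d_1\lambda\,(1+\sqrt{\log\kappa})^2$. Since $\cI \neq \emptyset$ ensures $d_1 \geq 1$, and since $\lambda \geq 1$ and $(1+\sqrt{\log\kappa})^2 \geq 1 + \log\kappa$, the right-hand side is at least $32d_1\lambda + 32d_1\lambda\log\kappa$, which dominates the left-hand side term by term with a wide margin; this is a short elementary computation. I do not expect a real obstacle here: all the analytic substance lies in Lemma~\ref{lem:decay} (the weighted Hermite estimate together with the tensorization of $H_\cI$), and the rest is the bookkeeping above, the only point worth a moment's attention being the calibration of the constant $C$ so that the final factor lands below $1/4$.
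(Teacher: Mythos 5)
Your argument is correct and matches the paper's proof in all essentials: both rely on Lemma~\ref{lem:decay}, the geometric fact that $k \in \cK_c^\complement$ forces $\abs{x_\cI} \geq C\lambda^{1/2}$ on $Q_k$, the bounded overlap $\sum_k \indic_{Q_k} \leq \kappa$, and the calibration of $C$ so that $\kappa\,2^{2(d_1+1)+\lambda}\euler^{-C^2\lambda/(32d_1)} \leq 1/4$. The only cosmetic difference is that you insert the Gaussian weight on each $Q_k$ and then sum, whereas the paper first bounds the tail $\norm{f}_{L^2(\RR^d\setminus(B(0,C\lambda^{1/2})\times\RR^{d_2}))}^2$ and then sums over the $Q_k$ contained in it; the content is identical.
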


Here $B(0,C\lambda^{1/2})$ denotes the ball in $\RR^{d_1}$.

\begin{proof}
	For $f \in \Ran P_{(-\infty,\lambda]}(H_\cI)$ and $s \geq C\lambda^{1/2}$, Lemma~\ref{lem:decay} implies that
	\eqs{
		\norm{f}_{L^2(\RR^d\setminus (B(0,s)\times \RR^{d_2}))}^2
		& =
		\norm{\euler^{-|x_\cI|^2/64d_1}\euler^{|x_\cI|^2/64d_1}f}_{L^2(\RR^d\setminus (B(0,s)\times \RR^{d_2}))}^2\\
		& \leq
		\euler^{-s^2/32d_1}2^{2(d_1+1)+\lambda}\norm{f}_{L^2(\RR^d)}^2\\
		& \leq
		\frac{1}{4\kappa}\norm{f}_{L^2(\RR^d)}^2
		.
	}
	By definition, $Q_k\cap (B(0,CN^{1/2})\times\RR^{d_2})=\emptyset$ for  $k \in \cK_c^\complement$. Hence,
	\[
		\sum_{k \in \cK_c^\complement} \norm{f}_{L^2(Q_k)}^2
		\leq
		\kappa \norm{f}_{L^2(\RR^d\setminus (B(0,CN^{1/2})\times\RR^{d_2}))}^2
		\leq
		\frac{1}{4}\norm{f}_{L^2(\RR^d)}^2
		.\qedhere
	\]
\end{proof}%

The above motivates the following general hypothesis on the covering.

\begin{hypothesis}{$(\mathrm{H_\lambda})$}\label{hypothesis}
	Let $\cK$ be finite or countably infinite and let $(Q_k)_{k \in \cK}$ be an essential covering of $\RR^d$
	with multiplicity at most $\kappa$ as in \eqref{eq:covering-general-assumption}. Set $C = 32d_1(1 + \sqrt{\log\kappa})$.
	For fixed $\lambda \geq 1$, let
	\be\label{eq:defKc}
		\cK_c = \cK_c(\lambda) = \{ k \in \cK \colon Q_k \cap (B(0,C\lambda^{1/2})\times\RR^{d_2}) \neq \emptyset\}
	\ee
	For each $k\in\cK_c$, we suppose that
	\begin{enumerate}[(i)]
		\item
		$Q_k$ is non-empty, convex, open, and contained in a hyperrectangle with sides
		of length $l_k = (l_k^{(1)},\dots,l_k^{(d)}) \in (0,\infty)^d$
		parallel to the coordinate axes such that
		\item
		$\norm{l_k}_2 := \bigl((l_k^{(1)})^2 + \dots + (l_k^{(d)})^2 \bigr)^{1/2}\leq D \lambda^{(1-\eps)/2}$ for some $\eps \in (0,1]$ and $D > 0$
		independent of $k\in\cK_c$.
	\end{enumerate}
\end{hypothesis}

In what follows we call a set $Q \subset \RR^d$ \emph{centrally symmetric} if there is $x_0\in Q$ such that $x_0+x \in Q$ implies $x_0-x\in Q$.

Our general spectral inequality reads as follows.
Its proof is postponed to Subsection~\ref{ssec:proofSpecIneq} below.

\begin{theorem}\label{thm:gen}
	With fixed $\lambda \geq 1$ assume Hypothesis~\ref{hypothesis}.
	Let $a \geq 0$ and $\gamma \in (0,1)$ be given.
	If $\omega \subset \RR^d$ is measurable satisfying
	\be\label{eq:gen_assumption}
		\frac{\abs{\omega \cap Q_k}}{\abs{Q_k}}
		\geq
		\gamma^{\lambda^{a/2}}
		\quad\text{ for all }\
		k \in \cK_c
		,
	\ee
	then
	\be\label{eq:gen}
		\norm{f}_{L^2(\omega)}^2
		\geq
		\frac{3}{\kappa} \Biggl[ \frac{\gamma}{24 \cdot 2^d d^{1+d}} \Biggr]^{7\bigl( 1600\euler D(D+1) + \log(4\kappa^{1/2})\bigr)
		\lambda^{1-(\eps-a)/2}} \norm{f}_{L^2(\RR^d)}^2
		,
	\ee
	for every $f \in \Ran P_{(-\infty,\lambda]}(H_\cI)$.
	Here $\tau_d$ denotes the Lebesgue measure of the Euclidean unit ball in $\RR^d$.
	
	If, in addition, all $Q_k, k\in\cK_c$, are centrally symmetric
	(or a cube, respectively), then the term in square brackets can be replaced by
	\begin{equation}\label{eq:special-Q}
		\frac{\gamma}{24 \cdot 2^d d^{1+d/2}} \quad \left( \text{ or } \frac{\gamma}{24d^{1+d/2}\tau_d},
		\text{ respectively} \right).
	\end{equation}
\end{theorem}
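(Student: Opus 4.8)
The strategy is the one pioneered by Kovrijkine and adapted to spectral subspaces of the harmonic oscillator in \cite{DickeSV-21}: we reduce the global estimate to a local one on each "good" cell $Q_k$, $k \in \cK_c$, where we may compare the $L^2$-norm on $\omega \cap Q_k$ to the $L^2$-norm on all of $Q_k$ using analyticity of $f$ and a Remez-type inequality, and then sum up. By Corollary~\ref{cor:choice_Kc} the contribution of the cells $k\in\cK_c^\complement$ is at most $\tfrac14\norm{f}^2_{L^2(\RR^d)}$, so it suffices to bound $\sum_{k\in\cK_c}\norm{f}^2_{L^2(Q_k)}$ from above by $\sum_{k\in\cK_c}\norm{f}^2_{L^2(\omega\cap Q_k)}$ times the claimed factor; the multiplicity bound $\sum_k \indic_{Q_k}\le\kappa$ turns the cellwise estimates into the global one at the cost of a factor $\kappa$.

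\emph{First} I would quantify the analyticity of $f\in\Ran P_{(-\infty,\lambda]}(H_\cI)$: since $f$ extends to an entire function on $\CC^d$ (Corollary~\ref{cor:specfamH}), and since $H_1$ has eigenvalues $2k+d_1$ while $H_2$ has spectrum $[0,\infty)$, one controls the holomorphic extension of $f$ on a complex polydisc/tube neighbourhood of each $Q_k$ by $\norm{f}_{L^2(\RR^d)}$ with an exponential-in-$\lambda^{1/2}$ constant — this is the analogue of \cite[Lemma~2.x]{DickeSV-21}, using the Bernstein-type bound (Lemma~\ref{lem:Bernstein} mentioned in the acknowledgments) together with the decay from Lemma~\ref{lem:decay}. \emph{Second}, on each $Q_k$ with $k\in\cK_c$, Hypothesis~\ref{hypothesis} gives that $Q_k$ is convex, open, and sits inside a hyperrectangle of diameter $\norm{l_k}_2\le D\lambda^{(1-\eps)/2}$; combining the analytic bound with the doubling/Bernstein estimate yields that $f$ restricted to a slightly enlarged $Q_k$ has a bounded "Bernstein index" $N_k$, where $N_k \lesssim \big(D(D+1) + \log(\kappa^{1/2})\big)\lambda^{(1-(\eps-a)/2)} \cdot (\text{const})$ — the exact exponent in \eqref{eq:gen} should emerge here after tracking the $\euler$, the $1600$, and the $\log(4\kappa^{1/2})$ terms carefully. \emph{Third}, apply a Remez-type / Turán-type inequality on the convex body $Q_k$: for a polynomial-like function of degree $N_k$, the ratio $\norm{f}^2_{L^2(\omega\cap Q_k)}/\norm{f}^2_{L^2(Q_k)}$ is at least $(c\,|\omega\cap Q_k|/|Q_k|)^{C N_k}$ for an absolute $c$ and $C$. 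For general convex $Q_k$ this Remez constant carries an extra dimensional factor involving $2^d d^{1+d}$ (coming from bounding a convex body between two homothetic balls via John's theorem), whereas for centrally symmetric $Q_k$ or cubes one gets the sharper $2^d d^{1+d/2}$ or $d^{1+d/2}\tau_d$ respectively — this is exactly the dichotomy recorded in \eqref{eq:special-Q}. Feeding \eqref{eq:gen_assumption} in the form $|\omega\cap Q_k|/|Q_k|\ge\gamma^{\lambda^{a/2}}$ turns this into the bracketed base raised to the power $\propto\lambda^{a/2}\cdot N_k$, and since $N_k\propto\lambda^{1/2-(\eps-a)/2}/\lambda^{a/2}$... more precisely one arranges the bookkeeping so the total exponent is $7(1600\euler D(D+1)+\log(4\kappa^{1/2}))\lambda^{1-(\eps-a)/2}$.

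\emph{Fourth and last}, sum the cellwise inequalities: writing $\beta$ for the common lower bound obtained on each cell, $\sum_{k\in\cK_c}\norm{f}^2_{L^2(\omega\cap Q_k)}\ge \beta\sum_{k\in\cK_c}\norm{f}^2_{L^2(Q_k)}\ge\beta\big(\norm{f}^2_{L^2(\RR^d)}-\tfrac14\norm{f}^2_{L^2(\RR^d)}\big)=\tfrac34\beta\norm{f}^2_{L^2(\RR^d)}$, and dividing by the multiplicity $\kappa$ on the left (since $\norm{f}^2_{L^2(\omega)}\ge\kappa^{-1}\sum_k\norm{f}^2_{L^2(\omega\cap Q_k)}$) yields precisely \eqref{eq:gen}.

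\emph{Main obstacle.} The hardest and most delicate part is the second step: obtaining a sharp Bernstein/Turán-type bound on the local degree $N_k$ of $f$ on each good cell, with a constant that is \emph{uniform} in $k\in\cK_c$ and scales correctly in $\lambda$ — in particular the linear-in-$\lambda^{1/2}$ behaviour (up to the $\lambda^{-(\eps-a)/2}$ gain coming from the small cell size hypothesis) rather than the naive quadratic dependence. This requires combining the Hermite-function decay (Lemma~\ref{lem:decay}) with a careful Cauchy-estimate on the entire extension, optimizing the radius of the complex neighbourhood, and is where the explicit constants $1600\euler$, the factor $7$, and the $\log(4\kappa^{1/2})$ term are forced; the generalization from $\cI=\{1,\dots,d\}$ in \cite{DickeSV-21} to arbitrary $\cI$ enters exactly here, since now only the $\cI$-coordinates enjoy decay and the $\cI^\complement$-coordinates must be handled purely through the band-limiting on $H_2$ restricted to the cell of bounded size.
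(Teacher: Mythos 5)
Your outline follows the paper's Kovrijkine-style strategy quite closely (local estimate on each cell via analyticity, John's theorem to control the distortion of convex cells, summation with loss $\kappa$), but there is a genuine gap in the way you sum the cellwise estimates. You apply the local estimate with a common lower bound $\beta$ to \emph{every} $k\in\cK_c$ and then write
\[
\sum_{k\in\cK_c}\norm{f}_{L^2(\omega\cap Q_k)}^2\geq\beta\sum_{k\in\cK_c}\norm{f}_{L^2(Q_k)}^2 .
\]
However, a uniform lower bound of this form is \emph{not} available on all of $\cK_c$. The constant in the local estimate (Lemma~\ref{lem:localEstimate}) degrades with the normalized supremum
\[
M_k=\frac{\sqrt{\abs{Q_k}}}{\norm{f}_{L^2(Q_k)}}\sup_{z\in Q_k+D_{4l_k}}\abs{f(z)},
\]
and $M_k$ can be arbitrarily large on individual cells, e.g.\ if $f$ nearly vanishes on $Q_k$. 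The Bernstein inequality (Lemma~\ref{lem:Bernstein}) is a \emph{global} $L^2$ estimate and cannot be localized to an arbitrary cell; the paper handles this with a Chebyshev/Markov-type selection: one calls $Q_k$ \emph{good} if a localized version of the Bernstein bound holds on it (up to a factor $2^{m+1}\kappa$), shows via \eqref{eq:bad-mass} that bad cells carry at most half the $L^2$-mass, and via Corollary~\ref{cor:choice_Kc} that $\cK_c^\complement$ carries at most a quarter, so $\cK_c\cap\cK_g$ carries at least a quarter of the mass (Lemma~\ref{lem:reducCovering}). The uniform bound on $M_k$ (Lemma~\ref{lem:Mk}) is then only proved for $k\in\cK_c\cap\cK_g$, and the minimum of $a_k$ is taken over that smaller index set before summing. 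Your proposal skips this selection entirely and implicitly assumes a uniform Bernstein index on all of $\cK_c$, which is what makes the ``main obstacle'' you describe actually unresolved as written: it is not a matter of careful constant tracking but of restricting to a subfamily of cells on which the local bound is available while still retaining a definite fraction of the $L^2$-mass.

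A secondary, smaller inaccuracy: the factor $3$ and $\tfrac{1}{\kappa}$ in \eqref{eq:gen} arise together from $4\kappa$ (in Lemma~\ref{lem:reducCovering}) combined with the prefactor $12$ in Lemma~\ref{lem:localEstimate}, not from the $\tfrac34$ you obtain; but this would sort itself out once the good-cell step is inserted.
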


\begin{remark}
	(a)
	Examples of families $(Q_k)_{k\in\cK}$ satisfying Hypothesis~\ref{hypothesis} are discussed in
	Section~\ref{ssec:examplesSpecIneq} below.

	(b)
	Let us emphasize that on one hand, $\eps$ and $D$ in condition
	(ii) need to be uniform in $k \in \cK_c$.
	On the other, formally they are allowed to depend on $\lambda$.
	However, in all applications presented in this paper this will not be the case implying that the exponent in \eqref{eq:gen}
	is proportional to $\lambda^{1-\frac{\eps-a}{2}}$. In this case the relevant
	power satisfies $1-\frac{\eps-a}{2} < 1$ if and only if $a < \eps$.
\end{remark}

\subsection{The local estimate and good covering sets}\label{sec:local-estimate}

On a bounded domain the following local estimate is sufficient to derive the type of uncertainty relation we are aiming at.
We rely here on crucial ideas of Nazarov~\cite{Nazarov-94} and Kovrijkine~\cite{Kovrijkine-thesis,Kovrijkine-01}.
They have been used and (at least implicitly) formulated in several recent works related to our topic,
such as~\cite[Section~5]{EgidiV-20},~\cite{WangWZZ-19},~\cite[Section~3.3.3]{BeauchardJPS-21},
\cite{MartinPS-20-arxiv}, and~\cite[Lemma~3.5]{EgidiS-21}.
We spell out the formulation from the last mentioned reference:

\begin{lemma}\label{lem:localEstimate}
	Let $\lambda \geq 1$, $f \in \Ran P_{(-\infty,\lambda]}(H_\cI)$, and let $Q \subset \RR^d$ be a non-empty bounded convex open
	set that is contained in a hyperrectangle with sides of length $l \in (0,\infty)^d$ parallel to coordinate axes.

	Then, for every measurable set $\omega \subset \RR^d$ and every linear bijection $\Psi \colon \RR^d \to \RR^d$ we have
	\bes
		\norm{f}_{L^2(Q \cap \omega)}^2
		\geq
		12 \Bigl( \frac{\abs{\Psi(Q \cap \omega)}}{24d\tau_d(\diam\Psi(Q))^d} \Bigr)^{4\frac{\log M}{\log 2}+1}
			\norm{f}_{L^2(Q)}^2
	\ees
	with
	\[
		M := \frac{\sqrt{\abs{Q}}}{\norm{f}_{L^2(Q)}} \cdot \sup_{z \in Q + D_{4l}} \abs{f(z)},
	\]
	where $D_{4l}\subset\CC^d$ denotes the polydisc of radius $4l$ centered at the origin.
\end{lemma}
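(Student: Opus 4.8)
The plan is to reduce the statement to a now-classical Nazarov--Kovrijkine-type local estimate for functions that extend analytically to a suitable polydisc and whose analytic extension is controlled there. First I would observe that, by Corollary~\ref{cor:specfamH} (invoked earlier in this section), every $f\in\Ran P_{(-\infty,\lambda]}(H_\cI)$ extends to an entire function on $\CC^d$, so in particular $\sup_{z\in Q+D_{4l}}\abs{f(z)}$ is finite and the quantity $M$ is well defined (and $M\geq1$ by the maximum-modulus principle applied to the real points of $Q$, since $\abs{Q}^{-1}\norm{f}_{L^2(Q)}^2\leq \sup_{Q}\abs{f}^2$). The essential content to be used is the following: if $g$ is analytic on a neighborhood of a convex body $Q\subset\RR^d$ contained in a hyperrectangle of side lengths $l$, with analytic continuation to $Q+D_{4l}$, then for every measurable $E\subset Q$ one has a bound of the shape $\norm{g}_{L^2(E)}^2\geq c\bigl(\abs{E}/\abs{Q}\bigr)^{\,\mu}\norm{g}_{L^2(Q)}^2$ with $\mu$ controlled by $\log$ of the ratio $\sqrt{\abs{Q}}\,\sup_{Q+D_{4l}}\abs{g}/\norm{g}_{L^2(Q)}$. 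This is precisely \cite[Lemma~3.5]{EgidiS-21}, and I would cite it as the black box; the proof itself runs via a good-point/bad-point decomposition in the spirit of Nazarov and Kovrijkine, controlling the logarithmic derivative of $g$ along lines through $Q$ by Cartan-type estimates and iterating a Remez-type inequality in one variable.

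The next step is to transfer the set $E=Q\cap\omega$ through the linear bijection $\Psi$. The left-hand side $\norm{f}_{L^2(Q\cap\omega)}^2$ and the right-hand side $\norm{f}_{L^2(Q)}^2$ do not involve $\Psi$, so I would keep $f$ as is and apply the cited local estimate directly with $E=Q\cap\omega$, obtaining
\[
	\norm{f}_{L^2(Q\cap\omega)}^2
	\geq
	c\,\Bigl(\frac{\abs{Q\cap\omega}}{\abs{Q}}\Bigr)^{\mu}\norm{f}_{L^2(Q)}^2 .
\]
Then I would only need to compare $\abs{Q\cap\omega}/\abs{Q}$ with $\abs{\Psi(Q\cap\omega)}/\bigl(24d\tau_d(\diam\Psi(Q))^d\bigr)$. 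Since $\abs{\Psi(Q\cap\omega)}=\abs{\det\Psi}\cdot\abs{Q\cap\omega}$ and likewise $\abs{\Psi(Q)}=\abs{\det\Psi}\cdot\abs{Q}$, the ratio is $\Psi$-invariant: $\abs{\Psi(Q\cap\omega)}/\abs{\Psi(Q)}=\abs{Q\cap\omega}/\abs{Q}$. It then remains to note that $\Psi(Q)$ is a convex body of diameter $\diam\Psi(Q)$, hence $\abs{\Psi(Q)}\leq \tau_d(\diam\Psi(Q)/2)^d\leq \tau_d(\diam\Psi(Q))^d\leq 24d\tau_d(\diam\Psi(Q))^d$, so
\[
	\frac{\abs{Q\cap\omega}}{\abs{Q}}
	=
	\frac{\abs{\Psi(Q\cap\omega)}}{\abs{\Psi(Q)}}
	\geq
	\frac{\abs{\Psi(Q\cap\omega)}}{24d\tau_d(\diam\Psi(Q))^d} ,
\]
and since the base of the power is at most $1$, raising both sides of this inequality to the power $4\frac{\log M}{\log2}+1$ (which is $\geq1$ because $M\geq1$) and matching the numerical constant $c=12$ from the cited lemma yields exactly the claimed bound with exponent $4\frac{\log M}{\log2}+1$.

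The only genuine subtlety — and the step I expect to require the most care — is verifying that the hypotheses of \cite[Lemma~3.5]{EgidiS-21} are met with the stated polydisc radius $4l$ and the stated definition of $M$: namely that the ambient analyticity region there is exactly $Q+D_{4l}$ and that the "Bernstein-type" quantity appearing in its exponent coincides with our $M=\sqrt{\abs{Q}}\,\norm{f}_{L^2(Q)}^{-1}\sup_{z\in Q+D_{4l}}\abs{f(z)}$ up to the absorbed constants. I would check this by comparing normalizations directly against the reference; no new analytic input is needed, since we do not here exploit any Bernstein-type bound on $\sup_{Q+D_{4l}}\abs{f}$ in terms of $\lambda$ — that estimate (using $\sigma(H_\cI)\subset[d_1,\infty)$ and the Hermite/Fourier structure) is what will later be combined with this lemma in Subsection~\ref{ssec:proofSpecIneq}. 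Thus the proof of this lemma is essentially a citation plus the elementary $\Psi$-invariance and diameter-volume bookkeeping above.
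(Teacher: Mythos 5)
The paper does not actually prove this lemma: the sentence ``We spell out the formulation from the last mentioned reference'' means Lemma~\ref{lem:localEstimate} \emph{is} \cite[Lemma~3.5]{EgidiS-21}, verbatim, $\Psi$ and all, and the ``proof'' is a pure citation. Your proposed reduction from a $\Psi$-free intermediate lemma with base $\abs{Q\cap\omega}/\abs{Q}$ misremembers the cited result, and this matters: such a shape-free estimate is not a renormalization of \cite[Lemma~3.5]{EgidiS-21} but a strictly \emph{stronger} claim, which you neither prove nor locate in the literature. Your algebra is locally correct (volume ratios are $\Psi$-invariant, $\abs{\Psi(Q)}\le\tau_d(\diam\Psi(Q)/2)^d$ by the isodiametric inequality, $t\mapsto t^{\mu}$ is increasing), but it only runs downhill from the hypothetical stronger bound to the stated weaker one.

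The $\Psi$-freedom in the lemma is genuine, not cosmetic. The Nazarov--Kovrijkine--Cartan machinery in \cite{EgidiS-21} reduces to one-dimensional Remez-type estimates along line segments, and the density entering those estimates is naturally measured against a power of the \emph{diameter} of $Q$, not its volume; the linear map $\Psi$ is exactly what lets the user renormalize $Q$ to an almost isotropic shape afterwards. Indeed, if the $\Psi$-free bound
\begin{equation*}
	\norm{f}_{L^2(Q\cap\omega)}^2\ge 12\Bigl(\frac{\abs{Q\cap\omega}}{\abs{Q}}\Bigr)^{4\frac{\log M}{\log 2}+1}\norm{f}_{L^2(Q)}^2
\end{equation*}
held for arbitrary bounded convex $Q$, then Proposition~\ref{prop:John} (John's theorem) would be superfluous and the factor $24\cdot 2^d d^{1+d}$ in the base appearing in the proof of Theorem~\ref{thm:gen} would disappear entirely --- a substantial improvement the paper does not claim. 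So there is a genuine gap: the argument rests on a misstatement of the black box. The correct move is simply to cite \cite[Lemma~3.5]{EgidiS-21} as stated (which is what the paper does), or else to actually prove the local estimate from the Nazarov/Kovrijkine ideas, in which case the diameter dependence and hence the $\Psi$ will reappear. Your preliminary observations --- that analyticity follows from Corollary~\ref{cor:specfamH} and that $M\ge 1$ --- are correct and consistent with the paper's use of the lemma.
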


Note that the normalized supremum $M$ in the above lemma automatically satisfies $M \geq 1$.
In order to estimate
\be\label{eq:distortion}
	\frac{\abs{\Psi(Q \cap \omega)}}{(\diam\Psi(Q))^d}
	=
	\frac{\abs{Q \cap \omega}}{\abs{Q}} \cdot \frac{\abs{\Psi(Q)}}{(\diam\Psi(Q))^d}
\ee
we may choose $\Psi = \Id$ in case of a cube and get
\be\label{eq:quotient-cube}
	\frac{\abs{\Psi(Q)}}{(\diam \Psi(Q))^d}
	=
	\frac{1}{d^{d/2}}
	.
\ee
For the general case we use the following corollary to John's Ellipsoid Theorem.

\begin{proposition}\label{prop:John}
	Let $\emptyset \neq Q\subset \RR^d$ be convex, open, and bounded.
	Then there is a linear bijection  $\Psi \colon \RR^d \to \RR^d$ with
	\be\label{eq:quotient-estimates}
		\eta
		:=
		\frac{\tau_d}{2^d d^d}
		\leq
		\frac{\abs{\Psi(Q)}}{(\diam \Psi(Q))^d}
		\leq
		\frac{\tau_d}{2^{d/2}}
		.
	\ee
	If, in addition, $Q$ is centrally symmetric,
	then $\eta$ can be replaced by $\tau_d/(4d)^{d/2}$.
\end{proposition}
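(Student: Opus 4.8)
The plan is to obtain both inequalities from John's ellipsoid theorem. Recall that, $Q$ being a non-empty bounded convex open set, there is a (unique) ellipsoid $\cE \subset Q$ of maximal volume, with center $c$, such that $Q \subset c + d\,(\cE - c)$; if in addition $Q$ is centrally symmetric one may take $\cE$ centered at the center of symmetry, so that $c$ is that point and $Q \subset c + \sqrt{d}\,(\cE - c)$. Write $\cE = c + T(B)$ with $B$ the open Euclidean unit ball and $T \in \mathrm{GL}(d,\RR)$, and set $\Psi := T^{-1}$, which is a linear bijection of $\RR^d$; note that the quantity $\abs{\Psi(Q)}/(\diam\Psi(Q))^d$ is unchanged under composing $\Psi$ with translations, which is why allowing only linear (rather than affine) maps costs nothing.

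First I would push the chain $\cE \subset Q \subset c + d(\cE-c)$ forward by $\Psi$. Since $\Psi$ is linear, $\Psi(\cE) = \Psi(c) + B$ and $\Psi\bigl(c + d(\cE-c)\bigr) = \Psi(c) + dB$, hence
\[
	\Psi(c) + B \subset \Psi(Q) \subset \Psi(c) + dB .
\]
Thus $\Psi(Q)$ contains a translate of the unit ball and sits inside a translate of the ball of radius $d$, giving $\tau_d \leq \abs{\Psi(Q)}$ and $\diam\Psi(Q) \leq 2d$. Combining these,
\[
	\frac{\abs{\Psi(Q)}}{(\diam\Psi(Q))^d} \geq \frac{\tau_d}{(2d)^d} = \frac{\tau_d}{2^d d^d} = \eta ,
\]
which is the lower bound. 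For centrally symmetric $Q$ the same computation with $d$ replaced by $\sqrt d$ yields $\diam\Psi(Q) \leq 2\sqrt d$ and hence the asserted constant $\tau_d/(2\sqrt d)^d = \tau_d/(4d)^{d/2}$.

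For the upper bound I would observe that it in fact holds for \emph{any} linear bijection $\Psi$: the image $\Psi(Q)$ is a bounded measurable set, so by the isodiametric inequality $\abs{\Psi(Q)} \leq \tau_d\,(\diam\Psi(Q)/2)^d$, and since $2^d \geq 2^{d/2}$ this is even stronger than the claimed bound $\abs{\Psi(Q)} \leq \tau_d\,(\diam\Psi(Q))^d / 2^{d/2}$; dividing by $(\diam\Psi(Q))^d$ finishes the proof. (If one prefers to avoid the isodiametric inequality, Jung's theorem gives directly that a set of diameter $\Delta$ in $\RR^d$ has circumradius at most $\Delta/\sqrt{2}$, which suffices as well.)

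I do not expect a real obstacle here; the argument is essentially bookkeeping around John's theorem. The only points needing care are that $\Psi$ can be taken linear (handled by translation invariance of the quotient, as noted above) and keeping track of the factors $d$ versus $\sqrt d$ so as to land precisely on $\tau_d/(2^d d^d)$ and $\tau_d/(4d)^{d/2}$; one should also make sure the dilation in John's theorem is taken about the center of $\cE$ so that, after applying $\Psi$, it really becomes a \emph{concentric} ball around $\Psi(c)$.
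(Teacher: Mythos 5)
Your proof is correct, and the lower bound (including the centrally symmetric case) follows John's theorem in essentially the same way the paper does. The one genuine divergence is the upper bound: the paper circumscribes $\Psi(Q)$ by a ball of radius $\leq \diam\Psi(Q)/\sqrt 2$ via Jung's theorem and takes the volume of that ball, arriving at the constant $\tau_d/2^{d/2}$; you instead invoke the isodiametric (Bieberbach) inequality $\abs{\Psi(Q)} \leq \tau_d(\diam\Psi(Q)/2)^d$, which directly gives the sharper constant $\tau_d/2^d$ and dispenses with the circumscribed ball entirely. The isodiametric inequality is a strictly stronger fact than Jung's theorem (a set of diameter $\Delta$ need not fit inside a ball of radius $\Delta/2$ in $d\geq 2$, yet its volume cannot exceed that of such a ball), so your route is a bit deeper but both cleaner and quantitatively better; you also correctly point out that Jung's theorem alone would do, which is exactly the paper's argument. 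One tiny caveat of no consequence: you observe the upper bound holds for \emph{any} linear bijection, which is true, but the paper packages both bounds for the single $\Psi$ coming from John's theorem, which is all that is needed downstream.
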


\begin{proof}
	We first prove the upper bound in \eqref{eq:quotient-estimates}.
	By Jung's Theorem \cite{Jung-01}, $\Psi(Q)$ is contained in a
	ball $B$ of radius $R > 0$ satisfying
	\[
		R
		\leq
		\diam(\Psi(Q)) \sqrt{\frac{d}{2(d+1)}}
		\leq
		\diam(\Psi(Q))/\sqrt{2}
		.
	\]
	Hence we obtain
	\[
		\abs{\Psi(Q)}
		\leq
		\abs{B}
		=
		\frac{\tau_d}{2^{d/2}} (\diam\Psi(Q))^d
		.
	\]
	For the lower bound we use John's Theorem \cite{John-48}, which states that
	for every convex, open, bounded  $\emptyset\neq Q\subset\RR^d$ there is a linear
	bijection  $\Phi\colon \RR^d\to \RR^d$, some $z \in \RR^d$, and a radius $r>0$ such that the ellipsoid
	$\cE=\Phi(B_r)$ satisfies $\cE\subset Q +z \subset d \cdot \cE$ or, equivalently, setting $\Psi=\Phi^{-1}$
	\[
		B_r
		\subset
		\Psi(Q) +\Psi z
		\subset
		d \cdot B_r
		.
	\]
	This implies that $2r \leq \diam \Psi(Q) \leq 2r d$, as well as
	\[
		|\Psi(Q)|
		\geq
		\tau_d r^d
		\geq
		\tau_d \left(\frac{\diam(\Psi(Q))}{2d}\right)^d
		=
		\frac{\tau_d}{(2d)^d}\left(\diam(\Psi(Q))\right)^d.
	\]
	For centrally symmetric $Q$, John's Theorem gives
	$\cE\subset Q +z \subset \sqrt d \cdot \cE$ leading in the same
	way to the stated inequality.
\end{proof}

For the rest of this section, we fix $\lambda \geq 1$ and assume Hypothesis~\ref{hypothesis} for that $\lambda$.
Given a non-zero $f \in \Ran P_{(-\infty,\lambda]}(H_\cI)$, let
\be\label{eq:def_Mk}
	M_k := \frac{\sqrt{\abs{Q_k}}}{\norm{f}_{L^2(Q_k)}} \cdot \sup_{z \in Q_k + D_{4l_k}} \abs{f(z)}
\ee
denote the normalized supremum from the local estimate in Lemma~\ref{lem:localEstimate} corresponding to $Q_k$.
We do not know how to guarantee an upper bound on $M_k$ for all $k$, but for `sufficiently many' $k$. In order to make this
precise, we first derive for functions in $\Ran P_{(-\infty,\lambda]}(H_\cI)$ a so-called Bernstein-type inequality.
For the particular case of the harmonic oscillator, that is, for $\cI = \cJ = \{1,\dots,d\}$, this was first established in
\cite[Proposition~4.3\,(ii)]{BeauchardJPS-21} and later reproduced in a slightly different form in \cite[Proposition~B.1]{EgidiS-21}.

\begin{lemma}\label{lem:Bernstein}
	Given $\lambda \geq 1$, every function $f \in \Ran P_{(-\infty,\lambda]}(H_\cI)$ satisfies
	\begin{align*}
		\sum_{\abs{\alpha}=m} \frac{1}{\alpha!}\norm{\partial^\alpha f}_{L^2(\RR^d)}^2
		&\leq
		\frac{C_B(m,\lambda)}{m!} \cdot \norm{f}_{L^2(\RR^d)}^2
		\quad\text{ for all }\
		m \in \NN_0
		,
	\end{align*}
	where $\displaystyle C_B(m,\lambda):= 2^m \prod_{k=0}^{m-1}(\lambda+2k)$.
\end{lemma}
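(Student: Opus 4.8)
The plan is to reduce the assertion to the case of the pure harmonic oscillator, and to handle that case by an induction on $m$, using that a single differentiation enlarges the corresponding spectral subspace by only $2$. Throughout I abbreviate $S_m(g) := \sum_{\abs{\alpha}=m}\frac{1}{\alpha!}\norm{\partial^\alpha g}_{L^2(\RR^n)}^2$, and I note that it suffices to prove the bound with $\prod_{j=0}^{m-1}(\lambda+2j)$ in place of $C_B(m,\lambda)=2^m\prod_{j=0}^{m-1}(\lambda+2j)$.

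\textbf{Step 1: the pure harmonic oscillator.} I first treat $H_1=-\Delta+\abs{x}^2$ on $L^2(\RR^n)$ for arbitrary $n\in\NN$, claiming that for every $\mu\geq0$ and every $g\in\Ran P_{(-\infty,\mu]}(H_1)$,
\[
	S_m(g)\leq\frac{1}{m!}\prod_{j=0}^{m-1}(\mu+2j)\,\norm{g}_{L^2(\RR^n)}^2 .
\]
This goes by induction on $m$, the case $m=0$ being an equality. For the inductive step I use the elementary identity $\frac{m}{\alpha!}=\sum_{j:\alpha_j\geq1}\frac{1}{(\alpha-e_j)!}$, which gives $m\,S_m(g)=\sum_{j=1}^n S_{m-1}(\partial_j g)$. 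Two facts then close the induction: since $\Ran P_{(-\infty,\mu]}(H_1)$ is spanned by the Hermite functions $h_\beta$ with $2\abs{\beta}+n\leq\mu$ and $\partial_j h_\beta$ is a combination of $h_{\beta-e_j}$ and $h_{\beta+e_j}$, whose $H_1$-eigenvalues are $2\abs{\beta}+n\mp2\leq\mu+2$, one has $\partial_j g\in\Ran P_{(-\infty,\mu+2]}(H_1)$; and $\sum_{j=1}^n\norm{\partial_j g}^2=\langle-\Delta g,g\rangle\leq\langle H_1 g,g\rangle\leq\mu\norm{g}^2$. Applying the induction hypothesis to each $\partial_j g$ with parameter $\mu+2$, summing, and using $\mu\prod_{j=0}^{m-2}(\mu+2+2j)=\prod_{j=0}^{m-1}(\mu+2j)$ yields the claim.

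\textbf{Step 2: reduction by partial Fourier transform.} After reordering coordinates assume $\cI=\{1,\dots,d_1\}$ and write $z=(x,y)$, $x\in\RR^{d_1}$, $y\in\RR^{d_2}$; if $d_2=0$ we are done by Step~1, so let $d_2\geq1$ and let $\widehat f(x,\eta)$ be the Fourier transform of $f$ in $y$. Since $H_\cI=H_1\otimes I_2+I_1\otimes(-\Delta_y)$ with $H_1=-\Delta+\abs{x}^2$ on $L^2(\RR^{d_1})$, the Fourier transform in $y$ turns $H_\cI$ into the fiber operators $H_1+\abs{\eta}^2$, so $f\in\Ran P_{(-\infty,\lambda]}(H_\cI)$ iff $\widehat f(\cdot,\eta)\in\Ran P_{(-\infty,\lambda-\abs{\eta}^2]}(H_1)$ for a.e.\ $\eta$; in particular $\widehat f(\cdot,\eta)=0$ unless $\abs{\eta}^2\leq\lambda-d_1$, and for such $\eta$ it is a finite Hermite combination with $L^2$-coefficients supported in that bounded set, which ensures $\partial^\alpha f\in L^2(\RR^d)$ so that $S_m(f)<\infty$ (cf.\ Corollary~\ref{cor:specfamH}). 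Splitting $\alpha=(\alpha',\alpha'')$ along $x$ and $y$, Plancherel in $y$ gives $\norm{\partial_x^{\alpha'}\partial_y^{\alpha''}f}^2=c\int\eta^{2\alpha''}\norm{\partial_x^{\alpha'}\widehat f(\cdot,\eta)}_{L^2(\RR^{d_1})}^2\Diff{\eta}$ with a constant $c$ for which also $\norm{f}^2=c\int\norm{\widehat f(\cdot,\eta)}^2\Diff{\eta}$. Performing the sum over $\alpha''$ via $\sum_{\abs{\alpha''}=l}\frac{\eta^{2\alpha''}}{\alpha''!}=\frac{\abs{\eta}^{2l}}{l!}$ and applying Step~1 fiberwise with $\mu=\lambda-\abs{\eta}^2$, I arrive at
\[
	S_m(f)\leq c\int\sum_{k=0}^m\frac{\abs{\eta}^{2(m-k)}}{(m-k)!}\cdot\frac{1}{k!}\prod_{j=0}^{k-1}\!\bigl(\lambda-\abs{\eta}^2+2j\bigr)\,\norm{\widehat f(\cdot,\eta)}_{L^2(\RR^{d_1})}^2\Diff{\eta} .
\]

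\textbf{Step 3: reassembly, and the main obstacle.} It remains to bound the inner sum. Fixing $\eta$ and setting $t:=\abs{\eta}^2\in[0,\lambda]$, $\mu:=\lambda-t\geq0$, I use $t^{m-k}\leq\prod_{j=0}^{m-k-1}(t+2j)$ together with the Chu--Vandermonde identity
\[
	\sum_{k=0}^m\binom{m}{k}\prod_{j=0}^{m-k-1}(t+2j)\prod_{j=0}^{k-1}(\mu+2j)=\prod_{j=0}^{m-1}(t+\mu+2j)
\]
(which is $\sum_k\binom{m}{k}(a)_{m-k}(b)_k=(a+b)_m$ for rising factorials, rescaled by $2^m$ with $a=t/2$, $b=\mu/2$), so that the inner sum is at most $\frac{1}{m!}\prod_{j=0}^{m-1}(\lambda+2j)$; integrating in $\eta$ and using $\norm{f}^2=c\int\norm{\widehat f(\cdot,\eta)}^2\Diff{\eta}$ gives $S_m(f)\leq\frac{1}{m!}\prod_{j=0}^{m-1}(\lambda+2j)\norm{f}^2\leq\frac{C_B(m,\lambda)}{m!}\norm{f}^2$, as desired. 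The argument is largely bookkeeping once this picture is in place; the one structural point that really matters is the observation in Step~1 that differentiation moves the harmonic spectral subspace up by exactly $2$, which is what lets the induction close with a product of the form $\prod(\mu+2j)$ rather than a power of $\mu$. The parts most prone to error are the reassembly in Steps~2--3 — getting the partial Fourier transform and the combinatorial identity exactly right — and recording beforehand (via Corollary~\ref{cor:specfamH}) that $\partial^\alpha f\in L^2(\RR^d)$, so that all manipulations are legitimate.
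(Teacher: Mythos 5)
Your proposal is correct, and it actually proves something slightly stronger than the stated lemma: you obtain $\sum_{\abs{\alpha}=m}\frac{1}{\alpha!}\norm{\partial^\alpha f}^2 \leq \frac{1}{m!}\prod_{j=0}^{m-1}(\lambda+2j)\norm{f}^2$, i.e.\ you eliminate the factor $2^m$ present in $C_B(m,\lambda)$.

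The route you take is genuinely different from the paper's. The paper keeps the tensor decomposition $H_\cI = H_1\otimes I_2 + I_1\otimes H_2$ in physical space and applies two separate Bernstein-type inequalities (cited from \cite{EgidiS-21}): one for the harmonic oscillator $H_1$ with constant $C_1(j,\lambda)=\prod_{k=0}^{j-1}(\lambda+2k)$ and one for the Laplacian $H_2$ with constant $\lambda^{m-j}$, each applied with the \emph{full} spectral budget $\lambda$. It then combines them into $\frac{1}{m!}\sum_j\binom{m}{j}C_1(j,\lambda)\lambda^{m-j}$, which has no clean closed form, and bounds it crudely by $C_1(m,\lambda)\cdot 2^m$ using $\lambda\leq\lambda+2k$. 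That crude step is exactly where the $2^m$ comes from. Your proof instead passes to the partial Fourier transform in the free ($\cJ\setminus\cI$) variables, where $H_\cI$ becomes the fiber operators $H_1+\abs{\eta}^2$; this correctly records that the spectral budget available to $H_1$ on the fiber over $\eta$ is only $\lambda-\abs{\eta}^2$, not all of $\lambda$. The reassembly then produces a sum to which Chu--Vandermonde applies exactly, yielding the closed form $\frac{1}{m!}\prod_{j=0}^{m-1}(\lambda+2j)$ and saving the factor $2^m$. (Your Step~1 also gives a self-contained inductive proof of the harmonic-oscillator Bernstein inequality, where the paper cites \cite{EgidiS-21}; the key observation that differentiation moves the $H_1$-spectral window up by exactly $2$ is what makes the product $\prod(\mu+2j)$ appear, and your handling of $\mu\mapsto\mu+2$ in the induction is correct.) The improvement is real but does not affect downstream applications: in the proof of Theorem~\ref{thm:gen} the factor $2^m$ in $C_B(m,\lambda)^{1/2}$ merely shifts the choice of $\delta$ by a constant, so only implicit constants change.
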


\begin{proof}
	Recall from \cite[Proposition~B.1]{EgidiS-21}
	and its proof that every $g \in \Ran P_{(-\infty,\lambda]}(H_1)$
	satisfies
	\[
		\sum_{\abs{\beta}=m} \frac{1}{\beta!}\norm{\partial^\beta g}_{L^2(\RR^{d_1})}^2
		\leq
		\frac{C_1(m,\lambda)}{m!} \norm{g}_{L^2(\RR^{d_1})}^2
		\quad\text{ for all }\
		m \in \NN_0
	\]
	with
	\[
		C_1(m,\lambda)
		=
		\prod_{k=0}^{m-1} (\lambda + 2k)
		.
	\]
	Moreover, every function $h \in \Ran P_{(-\infty,\lambda]}(H_2)$ satisfies
	\be \label{eq:Bernstein-H2}
		\sum_{\abs{\nu}=m} \frac{1}{\nu!}\norm{\partial^\nu h}_{L^2(\RR^{d_2})}^2
		\leq
		\frac{\lambda^m}{m!} \norm{h}_{L^2(\RR^{d_2})}^2
		\quad\text{ for all }\
		m \in \NN_0
	\ee
		which follows from \cite[Proposition~2.10]{EgidiS-21}; cf.~also \cite[Theorem~11.3.3]{Boas-54}.

	Let $f \in \Ran P_{(-\infty,\lambda]}(H_\cI)$. By Corollary~\ref{cor:specfamH}, we have that
	$(\partial^\beta f)(x,\cdot)$ belongs to $\Ran P_{(-\infty,\lambda]}(H_2)$ for all $x \in \RR^{d_1}$ and all
	$\beta \in \NN_{0,\cI}^d$. In the next step we split a multiindex $\alpha \in \NN_0^d$ as $\alpha = \beta + \nu$ with
	$\beta \in \NN_{0,\cI}^d$ and $\nu \in \NN_{0,\cI^\complement}^d$.
	Now we apply for fixed $m\in\NN$ and $\beta \in \NN_{0,\cI}^d$ inequality~\eqref{eq:Bernstein-H2}
	to $h=\partial^\beta f$ as well as Fubini's theorem to obtain
	\eqs{
		\sum_{\abs{\nu}=m-\abs{\beta}} \frac{1}{\nu!} \norm{\partial^\nu\partial^\beta f}_{L^2(\RR^d)}^2
		&\leq
		\frac{\lambda^{m-\abs{\beta}}}{(m-\abs{\beta})!} \norm{\partial^\beta f}_{L^2(\RR^d)}^2
		.
	}
	In the same way, $f(\cdot,y)$ belongs to $\Ran P_{(-\infty,\lambda]}(H_1)$ for all $y \in \RR^{d_2}$, so that
	\eqs{
		\sum_{\abs{\beta}=j} \frac{1}{\beta!} \norm{\partial^\beta f}_{L^2(\RR^d)}^2
		&\leq
		\frac{C_1(j,\lambda)}{j!} \norm{f}_{L^2(\RR^d)}^2
		.
	}
	Putting the last two estimates together, we arrive at
	\eqs{
		\sum_{\abs{\alpha} = m} \frac{1}{\alpha!} \norm{\partial^\alpha f}_{L^2(\RR^d)}^2
		&=
		\sum_{j=0}^m \sum_{\abs{\beta} = j} \frac{1}{\beta!} \sum_{\abs{\nu}=m-j} \frac{1}{\nu!}
			\norm{\partial^\nu\partial^\beta f}_{L^2(\RR^d)}^2\\
		&\leq
		\frac{1}{m!} \biggl( \sum_{j=0}^m \binom{m}{j} C_1(j,\lambda) \lambda^{m-j} \biggr) \norm{f}_{L^2(\RR^d)}^2
		.
	}
	In order to complete the proof, it only remains to observe that
	\eqs{
		\sum_{j=0}^m \binom{m}{j} C_1(j,\lambda) \lambda^{m-j}
		&=
		\sum_{j=0}^m \binom{m}{j} \prod_{k=0}^{j-1} (\lambda + 2k) \cdot \lambda^{m-j}\\
		&\leq
		\prod_{k=0}^{m-1} (\lambda + 2k) \sum_{j=0}^m \binom{m}{j}\\
		&=
		C_1(m,\lambda)  2^m
		.\qedhere
	}
\end{proof}%

Kovrijkine~\cite{Kovrijkine-thesis,Kovrijkine-01} established the approach of localizing the Bern\-stein-type inequality on so-called good $Q_k$.
It was used in many works thereafter, e.g.~\cite{EgidiV-20,BeauchardJPS-21}.
We rely here on the form presented in~\cite[Section~3.3]{EgidiS-21}:

We say that $Q_k$ for $k \in \cK$ is \emph{good} with respect to $f \in \Ran P_{(-\infty,\lambda]}(H_\cI)$ if
\[
	\sum_{\abs{\alpha} = m} \frac{1}{\alpha!}\norm{\partial^\alpha f}_{L^2(Q_k)}^2
	\leq
	2^{m+1} \kappa \frac{C_B(m,\lambda)}{m!} \norm{f}_{L^2(Q_k)}^2
	\quad\text{ for all }\
	m \in \NN,
\]
and we call $Q_k$ \emph{bad} otherwise. We then have
\be\label{eq:bad-mass}
	\sum_{k \in\cK\colon Q_k\text{ bad}} \norm{f}_{L^2(Q_k)}^2
	\leq
	\frac{1}{2}\norm{f}_{L^2(\RR^d)}^2
\ee
and we set
\be\label{eq:defKg}
	\cK_g
	:=
	\{ k \in \cK \colon Q_k\text{ good} \}
	.
\ee
Inequality \eqref{eq:bad-mass} shows that the $Q_k$ with $k\in\cK_g$ carry at least half of the $L^2$-mass of $f$.
However, we actually need a similar statement with $k\in\cK_g$ replaced by the intersection $\cK_c \cap \cK_g$.
This is guaranteed by the following lemma.

\begin{lemma}\label{lem:reducCovering}
	Given $f \in \Ran P_{(-\infty,\lambda]}(H_\cI)$ and $\cK_c$ and $\cK_g$ as in \eqref{eq:defKc} and \eqref{eq:defKg},
	respectively, we have
	\[
		\norm{f}_{L^2(\RR^d)}^2
		\leq
		4 \sum_{k \in \cK_c \cap \cK_g} \norm{f}_{L^2(Q_k)}^2
		.
	\]
	In particular,  $\cK_c \cap \cK_g \neq \emptyset$.
\end{lemma}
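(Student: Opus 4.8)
The plan is to combine the two mass-capture estimates already established: the ``bad-set'' bound \eqref{eq:bad-mass}, which says the $Q_k$ with $k \in \cK_g$ carry at least half of $\norm{f}_{L^2(\RR^d)}^2$, and Corollary~\ref{cor:choice_Kc}, which says the $Q_k$ with $k \in \cK_c^\complement$ carry at most a quarter of $\norm{f}_{L^2(\RR^d)}^2$. The point is that these two ``small'' complements — bad indices and indices outside $\cK_c$ — together cannot swallow more than three quarters of the mass, so their complement $\cK_c \cap \cK_g$ must retain at least a quarter.

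First I would write, using that $(Q_k)_{k\in\cK}$ is an essential covering (so $\bigcup_k Q_k$ is $\RR^d$ up to a null set) together with the multiplicity bound $\sum_k \indic_{Q_k} \le \kappa$ — actually only the covering property is needed here, since we are summing $L^2(Q_k)$-norms over a subfamily and want a lower bound — the decomposition
\[
	\norm{f}_{L^2(\RR^d)}^2
	\leq
	\sum_{k \in \cK} \norm{f}_{L^2(Q_k)}^2
	=
	\sum_{k \in \cK_c \cap \cK_g} \norm{f}_{L^2(Q_k)}^2
	+ \sum_{k \in \cK \setminus (\cK_c \cap \cK_g)} \norm{f}_{L^2(Q_k)}^2 .
\]
Then I would bound the second sum by splitting $\cK \setminus (\cK_c \cap \cK_g) \subset \cK_c^\complement \cup (\cK \setminus \cK_g)$, i.e.\ the index is either outside $\cK_c$ or bad, so
\[
	\sum_{k \in \cK \setminus (\cK_c \cap \cK_g)} \norm{f}_{L^2(Q_k)}^2
	\leq
	\sum_{k \in \cK_c^\complement} \norm{f}_{L^2(Q_k)}^2
	+ \sum_{k \in \cK\colon Q_k \text{ bad}} \norm{f}_{L^2(Q_k)}^2
	\leq
	\frac14 \norm{f}_{L^2(\RR^d)}^2 + \frac12 \norm{f}_{L^2(\RR^d)}^2 ,
\]
invoking Corollary~\ref{cor:choice_Kc} for the first term and \eqref{eq:bad-mass} for the second. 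Rearranging gives $\norm{f}_{L^2(\RR^d)}^2 \le 4 \sum_{k \in \cK_c \cap \cK_g} \norm{f}_{L^2(Q_k)}^2$, and since $f \neq 0$ the right-hand side is positive, forcing $\cK_c \cap \cK_g \neq \emptyset$; the case $f = 0$ is trivial.

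I do not expect a genuine obstacle here — the lemma is a bookkeeping step. The only mild subtlety is that one must bound $\sum_{k\in\cK}\norm{f}_{L^2(Q_k)}^2$ \emph{below} by $\norm{f}_{L^2(\RR^d)}^2$ (which is immediate from the essential covering property, the overlaps only helping), and then bound the discarded part \emph{above} term by term; one should make sure Corollary~\ref{cor:choice_Kc} is being applied with the same $\lambda$ and same constant $C = 32 d_1(1+\sqrt{\log\kappa})$ fixed in Hypothesis~\ref{hypothesis}, which is exactly the running convention in this subsection. Note also that the multiplicity $\kappa$ already sits inside the constant $C$ that defines $\cK_c$, so no extra factor of $\kappa$ appears in the final bound.
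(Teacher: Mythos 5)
Your proof is correct and takes essentially the same route as the paper: both bound $\sum_{k\in\cK_c^\complement\cup\cK_g^\complement}\norm{f}_{L^2(Q_k)}^2$ by $\tfrac34\norm{f}^2$ via subadditivity together with Corollary~\ref{cor:choice_Kc} and \eqref{eq:bad-mass}, and then pass to the complementary sum over $\cK_c\cap\cK_g$ using $\norm{f}_{L^2(\RR^d)}^2\leq\sum_{k\in\cK}\norm{f}_{L^2(Q_k)}^2$. The only difference is that you spell out explicitly the covering lower bound that the paper leaves implicit in the phrase ``passing to the complementary sum,'' which is a fine clarification.
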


\begin{proof}
	Subadditivity, Corollary~\ref{cor:choice_Kc}, and \eqref{eq:bad-mass} imply that
	\[
		\sum_{k \in \cK_c^\complement \cup \cK_g^\complement} \norm{f}_{L^2(Q_k)}^2
		\leq
		\sum_{k \in \cK_c^\complement} \norm{f}_{L^2(Q_k)}^2 + \sum_{k \in \cK_g^\complement} \norm{f}_{L^2(Q_k)}^2
		\leq
		\frac{3}{4} \norm{f}_{L^2(\RR^d)}^2
		.
	\]
	Passing to the complementary sum  over $k \in \cK_c \cap \cK_g$  proves the claim.
\end{proof}%

A key ingredient in the strategy of Kovrijkine \cite{Kovrijkine-01,Kovrijkine-thesis} is the observation that
each good $Q_k$ contains a point where a Taylor expansion with suitable upper bounds on the coefficients can be performed.
This can be proven by contradiction, see \cite[(1.5)]{Kovrijkine-thesis}.
Technically we follow the presentation in \cite{EgidiS-21} and show that for each
$k \in \cK_c \cap \cK_g$ there is a point $x_k \in Q_k$ with
\be\label{eq:pointwiseLocalBernstein}
	\sum_{|\alpha| = m} \frac{1}{\alpha!} \abs{\partial^\alpha f(x_k)}^2
	\leq
	\frac{4^{m+1} \kappa C_B(m,\lambda)}{m!}\frac{\norm{f}_{L^2(Q_k)}^2}{\abs{Q_k}}
\ee
for all $m \in \NN_0$ and all $\alpha \in \NN_0^d$ with $\abs{\alpha} = m$,
see \cite[Eq.~(3.9)]{EgidiS-21}. In order to see
this, we assume for contradiction that for all $x \in Q_k$ there is $m = m(x) \in \NN_0$ with
\[
	\sum_{\abs{\alpha}=m} \frac{1}{\alpha!} \abs{\partial^\alpha f(x)}^2
	>
	\frac{4^{m+1}\kappa C_B(m,\lambda)}{m!\abs{Q_k}} \norm{f}_{L^2(Q_k)}^2
	.
\]
We multiply the latter by $m!4^{-m-1}/(\kappa C_B(m,\lambda))$, estimate further by taking the sum over all $m \in \NN_0$ on the
left-hand side, integrate over $Q_k$, and take into account that $Q_k$ is good to obtain
\[
	\norm{f}_{L^2(Q_k)}^2
	<
	\norm{f}_{L^2(Q_k)}^2 \sum_{m\in\NN_0} 2^{-m-1}
	=
	\norm{f}_{L^2(Q_k)}^2
	,
\]
leading to a contradiction.
This proves \eqref{eq:pointwiseLocalBernstein}.

Using Taylor expansion around $x_k$, we now obtain similarly as in the
proof of \cite[Proposition~3.1]{EgidiS-21} the following result.

\begin{lemma}\label{lem:Mk}
	Let $k \in \cK_c \cap \cK_g$.
	Then, the quantity $M_k$ in~\eqref{eq:def_Mk} satisfies
	\bes
		M_k
		\leq
		2\kappa^{1/2}\sum_{m \in \NN_0} C_B(m,\lambda)^{1/2}
		 \frac{(10 \norm{l_k}_2)^m}{m!}
		,
	\ees
	where $\norm{l_k}_2^2 = (l_k^{(1)})^2 + \dots + (l_k^{(d)})^2$.
\end{lemma}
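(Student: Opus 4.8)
The plan is to bound $M_k$ directly from its definition in~\eqref{eq:def_Mk} by expanding $f$ in a Taylor series around the distinguished point $x_k \in Q_k$ whose existence is guaranteed by~\eqref{eq:pointwiseLocalBernstein}. Since $f$ extends to an analytic function on $\CC^d$ (by Corollary~\ref{cor:specfamH}), for every $z$ in the polydisc region $Q_k + D_{4l_k}$ we may write
\[
	f(z) = \sum_{\alpha \in \NN_0^d} \frac{\partial^\alpha f(x_k)}{\alpha!}(z - x_k)^\alpha,
\]
and estimate $\abs{z - x_k}$ componentwise. If $z \in Q_k + D_{4l_k}$ then, since $Q_k$ is contained in a hyperrectangle with side lengths $l_k = (l_k^{(1)},\dots,l_k^{(d)})$, each coordinate satisfies $\abs{z_j - (x_k)_j} \leq l_k^{(j)} + 4l_k^{(j)} = 5l_k^{(j)}$. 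Hence $\abs{(z-x_k)^\alpha} \leq \prod_j (5l_k^{(j)})^{\alpha_j} = 5^{\abs{\alpha}} \abs{l_k}^\alpha$ where $\abs{l_k}^\alpha := \prod_j (l_k^{(j)})^{\alpha_j}$.

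The next step is to organize the sum by the total degree $m = \abs{\alpha}$ and apply Cauchy–Schwarz in the $\alpha$-variables at each level. For fixed $m$,
\[
	\sum_{\abs{\alpha} = m} \frac{\abs{\partial^\alpha f(x_k)}}{\alpha!}\abs{(z - x_k)^\alpha}
	\leq
	5^m \Biggl( \sum_{\abs{\alpha}=m} \frac{\abs{\partial^\alpha f(x_k)}^2}{\alpha!}\Biggr)^{1/2}
	\Biggl( \sum_{\abs{\alpha}=m} \frac{\abs{l_k}^{2\alpha}}{\alpha!}\Biggr)^{1/2}.
\]
The first factor is controlled by~\eqref{eq:pointwiseLocalBernstein}, giving
$\bigl(4^{m+1}\kappa C_B(m,\lambda)/(m!\abs{Q_k})\bigr)^{1/2}\norm{f}_{L^2(Q_k)}$; the second factor equals $\bigl(\abs{l_k}_2^{2m}/m!\bigr)^{1/2}$ by the multinomial theorem, since $\sum_{\abs{\alpha}=m}\binom{m}{\alpha} t^{2\alpha} = (\sum_j t_j^2)^m$ with $t = l_k$. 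Multiplying the two bounds and collecting the powers, the factor $5^m \cdot 2 \cdot 2^m = 2 \cdot 10^m$ emerges together with $\kappa^{1/2} C_B(m,\lambda)^{1/2} \abs{l_k}_2^m / m!$ and the normalizing factor $\sqrt{\abs{Q_k}}/\norm{f}_{L^2(Q_k)}$ cancels against the one in the definition of $M_k$. Summing over $m \in \NN_0$ and taking the supremum over $z$ then yields exactly
\[
	M_k \leq 2\kappa^{1/2}\sum_{m \in \NN_0} C_B(m,\lambda)^{1/2}\frac{(10\norm{l_k}_2)^m}{m!}.
\]

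**The main obstacle** is largely bookkeeping: one must be careful that the Taylor series genuinely converges on the closed region $Q_k + D_{4l_k}$ so that the term-by-term estimate is legitimate — this is where analyticity of $f$ on all of $\CC^d$ (not merely some neighborhood) is essential, and one should note that $C_B(m,\lambda)^{1/2}(10\norm{l_k}_2)^m/m!$ is summable since $C_B(m,\lambda) = 2^m\prod_{k=0}^{m-1}(\lambda+2k) \leq (\lambda+2m)^m 2^m$ grows only like $(Cm)^m$, dominated by the $1/m!$ decay. A secondary subtlety is that the pointwise bound~\eqref{eq:pointwiseLocalBernstein} is only available for $k \in \cK_c \cap \cK_g$, which is precisely the hypothesis of the lemma, so no further restriction is needed. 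The rest is the elementary Cauchy–Schwarz and multinomial algebra sketched above.
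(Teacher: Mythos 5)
Your argument reproduces the paper's proof almost verbatim: Taylor expansion of $f$ around the distinguished point $x_k$ from \eqref{eq:pointwiseLocalBernstein}, the inclusion $Q_k + D_{4l_k} \subset x_k + D_{5l_k}$, Cauchy--Schwarz at each degree level with the weights $1/\sqrt{\alpha!}$, and the multinomial identity $\sum_{\abs{\alpha}=m} l_k^{2\alpha}/\alpha! = \norm{l_k}_2^{2m}/m!$. The only cosmetic difference is that you extract the factor $5^m$ before applying Cauchy--Schwarz whereas the paper keeps $(5l_k)^\alpha$ inside; the computation and conclusion are the same.
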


\begin{proof}
	Let $x_k \in Q_k$ be a point as in \eqref{eq:pointwiseLocalBernstein}. Using Taylor expansion of $f$ around $x_k$, for every
	$z \in x_k + D_{5l_k}$ we then have
	\eqs{
		\abs{f(z)}
		&\leq
		\sum_{\alpha \in \NN_0^d} \frac{\abs{\partial^\alpha f(x_k)}}{\alpha!} \abs{(z-x_k)^\alpha}
		\leq
		\sum_{m\in\NN_0}\sum_{\abs{\alpha}=m} \frac{\abs{\partial^\alpha f(x_k)}}{\alpha!} (5l_k)^\alpha\\
		&\leq
		\sum_{m\in\NN_0} \biggl(\sum_{\abs{\alpha}=m} \frac{(5l_k)^{2\alpha}}{m!}\biggr)^{1/2}
			\biggl(\sum_{\abs{\alpha}=m} \frac{\abs{\partial^\alpha f(x_k)}^2}{\alpha!}\biggr)^{1/2}\\
		&\leq
		\sum_{m\in\NN_0} \frac{(5\norm{l_k}_2)^m}{\sqrt{m!}}
		\biggl(\sum_{\abs{\alpha}=m} \frac{\abs{\partial^\alpha f(x_k)}^2}{\alpha!}\biggr)^{1/2}\\
		&\leq
		2\kappa^{1/2} \frac{\norm{f}_{L^2(Q_k)}}{\sqrt{\abs{Q_k}}} \sum_{m\in\NN_0} C_B(m,\lambda)^{1/2} \frac{(10\norm{l_k}_2)^m}{m!}
		,
	}
	where for the second last inequality we used that $\sum_{|\nu| = m} l_k^{2\nu}/\nu! = \norm{l_k}_2^{2m}/m!$.
	Taking into account that $Q_k + D_{4l_k} \subset x_k + D_{5l_k}$, this proves the claim.
\end{proof}%

\subsection{Proof of Theorem~\ref{thm:gen}}\label{ssec:proofSpecIneq}

With the above preparations, we are finally in position to prove our abstract spectral inequality.

\begin{proof}[Proof of Theorem~\ref{thm:gen}]
	In light of Hypothesis~\ref{hypothesis}, the local estimate in
	Lemma~\ref{lem:localEstimate} and
	Proposition~\ref{prop:John} yield
	\[
		\norm{f}_{L^2(Q_k\cap \omega)}^2
		\geq
		a_k\norm{f}_{L^2(Q_k)}^2
		\quad\text{ with}\quad
		a_k
		=
		12 \Bigl( \frac{\abs{Q_k \cap \omega}}{24\cdot 2^d d^{1+d} \abs{Q_k}} \Bigr)^{4\frac{\log M_k}{\log 2} + 1}
	\]
	for $k \in \cK_c$, where $M_k$ is as in~\eqref{eq:def_Mk}.
	By Lemma~\ref{lem:reducCovering} we then have
	\be\label{eq:gen:min}
		\begin{split}
			\Bigl( \min_{k \in \cK_c \cap \cK_g} a_k \Bigr) \norm{f}_{L^2(\RR^d)}^2
			&\leq
			4\sum_{k \in \cK_c \cap \cK_g} a_k \norm{f}_{L^2(Q_k)}^2\\
			&\leq
			4\sum_{k \in \cK_c \cap \cK_g} \norm{f}_{L^2(Q_k\cap \omega)}^2
			\leq
			4\kappa \norm{f}_{L^2(\omega)}^2
			.
		\end{split}
	\ee
	Using assumption~\eqref{eq:gen_assumption} on the set $\omega$, we have
	\be\label{eq:gen:ak}
		a_k
		\geq
		12\Bigl( \frac{\gamma^{\lambda^{a/2}}}{24\cdot 2^d d^{1+d} } \Bigr)^{4\frac{\log M_k}{\log 2}+1}
		\quad\text{ for all}\
		k \in \cK_c
		.
	\ee
	In order to proceed further, we recall that condition (ii)
	of Hypothesis~\ref{hypothesis}
	gives $\norm{l_k}_2 \leq D\lambda^{(1-\eps)/2}$ for all $k \in \cK_c$ and infer from the proof of
	Proposition B.1 in \cite{EgidiS-21} that
	\[
		\prod_{k=0}^{m-1} (\lambda + 2k)
		\leq
		(2\delta)^{2m} \euler^{\euler/\delta^2} (m!)^2 \euler^{2\sqrt{\lambda}/\delta} \quad \text{ for } \delta >0.
	\]
	Hence, Lemma~\ref{lem:Mk} and the definition of $C_B(m,\lambda)$ in Lemma~\ref{lem:Bernstein} imply
	\eqs{
		M_k
		&\leq
		2\kappa^{1/2} \sum_{m \in \NN_0} C_B(m,\lambda)^{1/2} \frac{(10 D\lambda^{(1-\eps)/2})^m}{m!}\\
		&=
		2\kappa^{1/2} \euler^{\euler/(2\delta^2)} \euler^{\sqrt{\lambda}/\delta} \sum_{m \in \NN_0} (20\sqrt{2}\delta D\lambda^{(1-\eps)/2})^m
		\quad \text{ for all } k \in \cK_c \cap \cK_g
	}
	for $\delta>0$.
	Choosing
	\bes
		\delta
		=
		\bigl( 40\sqrt{2}D\lambda^{(1-\eps)/2} \bigr)^{-1}
		,
	\ees
	we obtain
	\bes
		\begin{split}
		M_k &\leq 4\kappa^{1/2}\exp(1600\euler D^2 \lambda^{1-\eps} + 40\sqrt{2}D\lambda^{(1-\eps)/2}\sqrt{\lambda})\\
		&\leq 4\kappa^{1/2}\exp(1600\euler D(D+1) \lambda^{1-\eps/2})
		\end{split}
	\ees
	and, thus,
	\eqs{
		\log M_k
		&\leq
		\log(4\kappa^{1/2})+1600\euler D(D+1) \lambda^{1-\eps/2} \\
		&\leq
		\bigl( 1600\euler D(D+1) + \log(4\kappa^{1/2}) \bigr) \lambda^{1-\eps/2}
	}
	for all $k \in \cK_c \cap \cK_g$.
	Combining the latter with \eqref{eq:gen:ak}, we arrive at
	\[
		a_k
		\geq
		12\Bigl( \frac{\gamma}{24 \cdot 2^d d^{1+d}} \Bigr)^{7\bigl( 1600\euler D(D+1) + \log(4\kappa^{1/2})\bigr) \lambda^{1-(\eps-a)/2}}
	\]
	for all $k \in \cK_c \cap \cK_g$,
	where we used that $1+4/\log2 \leq 7$. In view of \eqref{eq:gen:min}, this proves the claim.

	If all $Q_k, k\in\cK_c$ are centrally symmetric we use the sharper lower bound in Proposition~\ref{prop:John} to
	replace $d^{1+d}$ by $d^{1+d/2}$
	in the lower bound on $a_k$, and similarly in the case of cubes.
\end{proof}%

\subsection{Examples}\label{ssec:examplesSpecIneq}

We now discuss examples of sets $\omega \subset \RR^d$, where Theorem~\ref{thm:gen}
can be applied with $D$ and $\eps$ not depending on $\lambda$. In the situation of Theorem~\ref{thm:ucp}, these sets are characterized in terms of an
explicit covering, but for Corollary~\ref{cor:specIneq:genSets} below the covering is implicitly constructed using Besicovitch's
covering theorem. Both results should be regarded as corollaries to Theorem~\ref{thm:gen}.

We start with the proof of Theorem~\ref{thm:ucp}.

\begin{proof}[Proof of Theorem~\ref{thm:ucp}]
	Take $Q_k = \Lambda_L(k)$ for $k \in \cK = (L\ZZ)^d$. We then have $\kappa = 1$ and, thus, $C = 32d_1 \leq 32d$ in Hypothesis \ref{hypothesis}.
	In view of the asymptotic formula $\tau_d \sim (2\pi\euler/d)^{d/2} / \sqrt{d\pi}$
	we infer the bound $24d^{1+d/2}\tau_d \leq K^d$ for the term appearing in
	\eqref{eq:special-Q} in the case of a cube.

	It is easy to see that $l_k = (L,\dots,L)$ satisfies
	$\norm{l_k}_2 = \sqrt{d}L = D\lambda^{0}$ with $D := \sqrt{d}L$. Hence,
	$(Q_k)_{k\in\cK}=(\Lambda_L(k))_{k\in(L\ZZ)^d}$ satisfies Hypothesis~\ref{hypothesis} for every $\lambda \geq 1$.

	It is also not hard to verify that
	\[
		\frac{|k_\cI|}{2}
		\leq
		\inf_{x \in \Lambda_L(k)}|x_\cI|
		\leq
		C\lambda^{1/2}
		\quad\text{for all}\
		k\in\cK_c\subset (L\ZZ)^d.
	\]
	Here, the first inequality follows from the definition of $\Lambda_L(k)$ while the second follows from the definition of
	$\cK_c$. Finally, using these estimates, we calculate
	\[
		\gamma^{1+\abs{k_\cI}^a}
		\geq
		\bigl( \gamma^{2^a} \bigr)^{1+(\abs{k_\cI}/2)^a}
		\geq
		\bigl( \gamma^{2^a} \bigr)^{1+{C}^a \lambda^{a/2}}
		\geq
		\bigl(\gamma^{2(2C)^a}\bigr)^{\lambda^{a/2}}
		.
	\]
	The claim in Theorem~\ref{thm:ucp} now follows from Theorem~\ref{thm:gen} with $\eps = 1$ and
	$\gamma$ replaced by $\gamma^{2(2C)^a}$. It only remains to observe the particular constant in \eqref{eq:specIneq-fixed-cubes}
	from the simple estimate
	\[
		2\cdot (2C)^a\cdot 7 \bigl( 1600\euler D(D+1) + \log(4)\bigr)
		\leq
		K d^{1+a}(1+L)^2
		.
		\qedhere
	\]
\end{proof}%

We may also consider sets $\omega$ with respect
to a scale that is allowed to vary in the coordinate directions corresponding to $\cI$. To this end, let
$\rho\colon\RR^{d_1}\to (0,\infty)$ be any function that satisfies
\[
	\rho(x)\leq R(1+|x|^2)^{\frac{1-\eps}{2}}
	\quad\text{for all}\
	x \in \RR^{d_1}
\]
with $R>0$ and $\eps\in(0,1]$, and let $L > 0$.
Given $x \in \RR^d$, we introduce the coordinates $x = (x^{(1)},x^{(2)}) \in \RR^{d_1} \times \RR^{d_2}$ and set
\[
	Q(x)
	:=
	B(x^{(1)},\rho(x^{(1)})) \times \Lambda_{L}(x^{(2)})
	\subset
	\RR^{d_1} \times \RR^{d_2}
	.
\]

The following result now generalizes \cite[Theorem~2.1]{MartinPS-20-arxiv} and \cite[Theorem~2.7]{DickeSV-21}.

\begin{corollary}\label{cor:specIneq:genSets}
	Let $\omega\subset\RR^d$ be a measurable set with
	\[
		\frac{\abs{\omega\cap Q(x)}}{\abs{Q(x)}}
		\geq
		\gamma^{1+|x_\cI|^a}
		\quad\text{for all}\
		x \in \RR^d
	\]
	and for some fixed $a\in [0,\eps)$, $\gamma \in (0,1)$.
	
	Then, there is a universal constant $K\geq 1$ such that for every $\lambda \geq 1$ and all $f \in \Ran P_{(-\infty,\lambda]}(H_\cI)$
	we have
	\be\label{eq::specIneq:genSets}
		\norm{f}_{L^2(\omega)}^2
		\geq
		3\Bigl(\frac{\gamma}{\euler}\Bigr)^{K^{1+a}d^{(13+3a)/2}(1+R+L)^2 \lambda^{1-\frac{\eps-a}{2}}}\norm{f}_{L^2(\RR^d)}^2
		.
	\ee
\end{corollary}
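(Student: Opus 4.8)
The plan is to derive Corollary~\ref{cor:specIneq:genSets} from the abstract spectral inequality of Theorem~\ref{thm:gen} by building, out of the continuum family $\{Q(x)\}_{x\in\RR^d}$, a countable essential covering of $\RR^d$ that satisfies Hypothesis~\ref{hypothesis}; this runs parallel to the proofs of \cite[Theorem~2.1]{MartinPS-20-arxiv} and \cite[Theorem~2.7]{DickeSV-21}, now for the partial harmonic oscillator $H_\cI$. First I would treat the two tensor factors separately. In the $\RR^{d_2}$-direction I keep the standard cubes $\Lambda_L(m)$, $m\in(L\ZZ)^{d_2}$, which tile $\RR^{d_2}$ with multiplicity one. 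In the $\RR^{d_1}$-direction I apply Besicovitch's covering theorem to the balls $B(x^{(1)},\rho(x^{(1)}))$, $x^{(1)}\in\RR^{d_1}$, to extract a countable subfamily $\bigl(B(x_j,\rho(x_j))\bigr)_j$ that still covers $\RR^{d_1}$ but with overlap bounded by a constant $\kappa$ depending only on $d_1$; since $\rho$ grows strictly sublinearly, one first carries out a dyadic decomposition of $\RR^{d_1}$ so that the radii are bounded on each annular band and balls from sufficiently separated bands cannot overlap, which keeps the total multiplicity finite and dimensional. Setting $Q_{j,m}:=B(x_j,\rho(x_j))\times\Lambda_L(m)$ and $\cK=\{(j,m)\}$ then yields an essential covering $(Q_k)_{k\in\cK}$ of $\RR^d$ of multiplicity at most $\kappa$ by convex, open, bounded sets; each $Q_{j,m}$ is contained in the hyperrectangle with side lengths $2\rho(x_j)$ in the $\cI$-directions and $L$ in the $\cI^\complement$-directions, and, being a product of a ball and a cube, it is centrally symmetric about $(x_j,m)$.

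Next I would verify Hypothesis~\ref{hypothesis} for the indices $k\in\cK_c$. If $(j,m)\in\cK_c$, then $B(x_j,\rho(x_j))$ meets $B(0,C\lambda^{1/2})$, so $\abs{x_j}\leq C\lambda^{1/2}+\rho(x_j)\leq C\lambda^{1/2}+R(1+\abs{x_j}^2)^{(1-\eps)/2}$; because $\eps>0$ the right-hand side grows strictly sublinearly in $\abs{x_j}$, so this inequality closes and gives $\abs{x_j}\lesssim\lambda^{1/2}$ with an implicit constant independent of $\lambda$. Inserting this back into the bound for $\rho$ yields $\rho(x_j)\lesssim(1+R)\lambda^{(1-\eps)/2}$, hence $\norm{l_k}_2\leq\sqrt{4d_1\rho(x_j)^2+d_2L^2}\leq D\lambda^{(1-\eps)/2}$ with $D\asymp\sqrt{d}(1+R+L)$, so condition~(ii) of Hypothesis~\ref{hypothesis} holds with this $\eps$ and a $\lambda$-independent $D$. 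The density assumption transfers in the same spirit: for $(j,m)\in\cK_c$ we have $Q_{j,m}=Q(x)$ with $x=(x_j,m)$, whence $\abs{\omega\cap Q_{j,m}}/\abs{Q_{j,m}}\geq\gamma^{1+\abs{x_j}^a}$, and using $\abs{x_j}\lesssim\lambda^{1/2}$, $\lambda\geq1$, and $a\leq1$ this is bounded below by $\tilde\gamma^{\lambda^{a/2}}$ for a suitable $\tilde\gamma=\gamma^{c}\in(0,1)$ with $c$ independent of $\lambda$; this is exactly \eqref{eq:gen_assumption}.

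It then remains to invoke Theorem~\ref{thm:gen} in its centrally-symmetric form, which delivers
\[
	\norm{f}_{L^2(\omega)}^2
	\geq
	\frac{3}{\kappa}\Bigl(\frac{\tilde\gamma}{24\cdot2^dd^{1+d/2}}\Bigr)^{7\bigl(1600\euler D(D+1)+\log(4\kappa^{1/2})\bigr)\lambda^{1-(\eps-a)/2}}\norm{f}_{L^2(\RR^d)}^2
\]
for every $f\in\Ran P_{(-\infty,\lambda]}(H_\cI)$, after which the remaining task is elementary bookkeeping to collapse the right-hand side to the stated form: writing $\tilde\gamma=\gamma^{c}$ and using $1/A\geq(1/\euler)^{\log A}$ for $A=24\cdot2^dd^{1+d/2}\geq\euler$ replaces the base by $\gamma/\euler$ at the price of an extra $d$-dependent factor in the exponent, and the asymptotics $\tau_d\sim(2\pi\euler/d)^{d/2}/\sqrt{d\pi}$, together with $D\asymp\sqrt{d}(1+R+L)$ and $d$-dependent bounds for $\log\kappa$ and $c$, let all constants be folded into $K^{1+a}d^{(13+3a)/2}(1+R+L)^2$. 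I expect the main obstacle to be precisely this chain of constant estimates, coupled with the self-referential radius bound for $k\in\cK_c$: one must check carefully that every implicit constant appearing there is independent of $\lambda$, so that the exponent of the final inequality really is proportional to $\lambda^{1-(\eps-a)/2}$, a power of $\lambda$ that is sublinear exactly because of the standing hypothesis $a<\eps$.
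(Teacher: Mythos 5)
Your plan lands on the same two pillars as the paper --- Besicovitch to manufacture a covering, then a direct appeal to Theorem~\ref{thm:gen} in its centrally symmetric form --- but you apply Besicovitch more ambitiously than necessary, and this is where the approaches diverge. The paper never tries to cover all of $\RR^{d_1}$ by balls. It applies Proposition~\ref{prop:Besicovitch} only to the \emph{bounded} set $A=B(0,C\lambda^{1/2})\subset\RR^{d_1}$, with $C=32d_1(1+\sqrt{\log(K_{\mathrm{Bes}}^{d_1})})$, obtains a countable subfamily $(B_j)_{j\in\cK_*}$ centered in $A$ of multiplicity $\kappa=K_{\mathrm{Bes}}^{d_1}$, and then declares $B_0:=\RR^{d_1}\setminus\bigcup_{j\in\cK_*}B_j$ as a single catch-all index $0\in\cK_0$. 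This is harmless because Hypothesis~\ref{hypothesis} imposes the convexity, central symmetry, and side-length conditions only on $Q_k$ with $k\in\cK_c$, and by construction $\cK_c=\cK_*\times(L\ZZ)^{d_2}$ never touches the irregular piece $B_0$. Two consequences follow for free: the multiplicity $\kappa$ is purely dimensional, and since the centers $y_j$ lie in $A$ one has $\abs{y_j}\le C\lambda^{1/2}$ directly, with no self-referential inequality to close.

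Your dyadic decomposition of all of $\RR^{d_1}$ is therefore over-engineering, and it creates a risk you haven't flagged: near the origin, across scales $2^n\lesssim R^{1/\eps}$, the radii $\rho(x)\le R(1+\abs{x}^2)^{(1-\eps)/2}$ need not be small compared with the band width, so balls from order $\log(R)/\eps$ many bands can stack over a single point. Unless you take extra care (e.g.~merging all these small bands into a single bounded region and Besicovitch-covering that --- which collapses back to the paper's trick), your $\kappa$ picks up a dependence on $R$ and $\eps$ rather than being ``finite and dimensional'' as you assert, and $\kappa$ feeds into $C$, hence into $D$ and into $\log(4\kappa^{1/2})$ in Theorem~\ref{thm:gen}, so the final constant would need re-auditing. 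Your self-referential bound $\abs{x_j}\lesssim\lambda^{1/2}$ does close correctly, but note that the $\lambda$-independent implicit constant there, once propagated, gives $D\lesssim d^{5/2}(R+L)$ (the factor $C=O(d^{3/2})$ is multiplied by another $d$ when passing from $\rho$ to $\norm{l_k}_1$), not $D\asymp\sqrt d(1+R+L)$ as you wrote; this does not break anything since the target exponent $d^{(13+3a)/2}$ has room to spare, but your bookkeeping summary is off. The conversion from the base $\gamma^{1+C^a}/(24\cdot2^dd^{1+d/2})$ and prefactor $3/\kappa$ to $3(\gamma/\euler)^{E'}$ is exactly as you sketch, using $\gamma<1$ and $A\ge\euler$ to absorb both $\kappa$ and the denominator into the exponent, and matches the paper's last display.
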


In contrast to the situation in Theorem~\ref{thm:ucp}, the proof of Corollary~\ref{cor:specIneq:genSets} starts with the construction
of the family $(Q_k)_{k\in\cK}$, as the family is this time not given explicitly in the statement of the result. To this end, we use
the following formulation of the well-known Besicovitch covering theorem.

\begin{proposition}[Besicovitch]\label{prop:Besicovitch}
	If $A\subset\RR^{d_1}$ is a bounded set and $\cB$ is a family of closed balls such that each point in $A$ is the center of some ball
	in $\cB$, then there are at most countably many balls $(\overline{B}_k)\subset\cB$ such that
	\[
		\indic_A
		\leq
		\sum_k\indic_{\overline{B}_k}\leq K_{\mathrm{Bes}}^{d_1}
		,
	\]
	where $K_{\mathrm{Bes}} \geq 1$ is a universal constant.
\end{proposition}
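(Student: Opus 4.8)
Since the final statement is the classical Besicovitch covering theorem, the plan is to reproduce its standard proof. Write $n=d_1$. If $\sup\{r:\overline B(x,r)\in\cB,\ x\in A\}=\infty$, then $\cB$ already contains a single ball containing all of $A$ and we are done with multiplicity one; so assume this supremum is finite. Now construct a sequence of balls greedily: set $A_1=A$, and having picked $\overline B_1=\overline B(x_1,r_1),\dots,\overline B_{j-1}$, put $A_j=A\setminus\bigcup_{i<j}\overline B_i$; if $A_j=\emptyset$, stop, and otherwise let $R_j:=\sup\{r:\overline B(x,r)\in\cB,\ x\in A_j\}$, a nonincreasing and (by the reduction) finite quantity, and pick $x_j\in A_j$ together with $\overline B_j=\overline B(x_j,r_j)\in\cB$ satisfying $r_j>\tfrac34 R_j$. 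This yields an at most countable family $(\overline B_j)$, and for $i<j$ one has $x_j\notin\overline B_i$, i.e.\ $|x_j-x_i|>r_i$, as well as $r_j\le R_i<\tfrac43 r_i$; in particular a short computation shows the concentric balls $\overline B(x_j,r_j/3)$ are pairwise disjoint.

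Next I would check that $(\overline B_j)$ already covers $A$, which gives $\indic_A\le\sum_j\indic_{\overline B_j}$. Indeed, if some $z\in A$ belonged to no $\overline B_j$, then $z\in A_j$ for every $j$, hence $r_z\le R_j$ and therefore $r_j>\tfrac34 r_z$ for all $j$; then the disjoint balls $\overline B(x_j,r_j/3)$ would be infinitely many, each of radius at least $r_z/4>0$, with centers in the bounded set $A$ — impossible.

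It then remains to bound $\sum_j\indic_{\overline B_j}(z)$ by a constant of the form $K_{\mathrm{Bes}}^{\,n}$, uniformly in $z\in\RR^n$. Fix $z$, list the indices $j$ with $z\in\overline B_j$ as $j_1<j_2<\dots$, and recall that $|x_{j_k}-z|\le r_{j_k}$, that $|x_{j_k}-x_{j_l}|>r_{j_k}$ for $k<l$, and that $r_{j_l}\le\tfrac43 r_{j_k}$ for $k<l$. I would split these indices into those whose radius is comparable (within a fixed factor) to the smallest one present and the remaining "large" ones. For the first group the radii are all comparable, the disjoint balls $\overline B(x_{j_k},r_{j_k}/3)$ all lie in a fixed ball around $z$, and a volume count bounds their number by $c_1^{\,n}$. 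For the second group, the three relations above force the directions $(x_{j_k}-z)/|x_{j_k}-z|$ to be pairwise separated by an angle bounded below by a universal $\theta_0>0$, so they constitute a $\theta_0$-separated subset of $S^{n-1}$, of cardinality at most $c_2^{\,n}$. Adding, the total is at most $K_{\mathrm{Bes}}^{\,n}$ with $K_{\mathrm{Bes}}$ universal, which is the assertion.

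The main obstacle is the combinatorial lemma in the last step: one must calibrate the split threshold together with the constant $\tfrac34$ in the greedy selection so that the volume count and the angular-separation estimate go through simultaneously. The angular estimate is precisely the place where the argument exploits the Euclidean geometry of $\RR^n$ — the statement genuinely fails in general metric spaces — and it is the reason Besicovitch's theorem is subtler than the Vitali $5r$-covering lemma.
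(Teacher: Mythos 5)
The paper does not prove Proposition~\ref{prop:Besicovitch}; it simply quotes the classical Besicovitch covering theorem as a black box and uses it in the proof of Corollary~\ref{cor:specIneq:genSets}. So there is no ``paper's own proof'' to compare against. Your reconstruction of the standard argument is essentially correct and follows the usual greedy-selection scheme (as in Mattila or Evans--Gariepy): pick, at each stage, a ball whose radius exceeds $\tfrac34$ of the current supremum; the computation that $r_i/3+r_j/3<\tfrac79\,r_i<r_i<|x_i-x_j|$ gives disjointness of the one-third balls, and your boundedness-of-$A$ argument correctly shows that the selection either terminates or covers $A$.

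The one place where your write-up is only an outline rather than a proof is the multiplicity bound, and you flag this yourself. The split into ``radii comparable to the smallest'' and ``large'' radii is the right idea, but the angular-separation claim is merely asserted: the statement that the three inequalities $|x_{j_k}-z|\le r_{j_k}$, $|x_{j_k}-x_{j_l}|>r_{j_k}$ for $k<l$, and $r_{j_l}\le\tfrac43 r_{j_k}$ for $k<l$ force a uniform lower bound on the angle $\angle(x_{j_k}-z,\,x_{j_l}-z)$ is precisely the delicate Euclidean lemma at the heart of Besicovitch, and it does require a careful law-of-cosines computation whose success depends on the selection constant (your $\tfrac34$) and the comparability threshold being calibrated together. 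Since the paper itself gives no proof, there is no harm in leaving this step as a citation to the literature, but as written your argument is a proof sketch, not a complete proof, exactly where the theorem is genuinely hard.
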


\begin{proof}[Proof of Corollary~\ref{cor:specIneq:genSets}]
	Suppose first that $d_1 < d$.
	For fixed $\lambda \geq 1$, we consider the set $A = B(0,C\lambda^{1/2}) \subset \RR^{d_1}$, where $C=32d_1(1+\sqrt{\log(K_{\mathrm{Bes}}^{d_1})})$.
	Then, the assumptions of Proposition~\ref{prop:Besicovitch} are fulfilled for $A$ and the family of balls
	$\cB=\{\overline{B(x,\rho(x))}\colon x\in A\}$. This shows that there is a subset $\cK_* \subset\NN$ and a collection of points
	$(y_j)_{j\in\cK_*} \subset A$ such that the balls $B_j=B(y_j,\rho(y_j))$ satisfy $\abs{A\setminus\bigcup_{j\in\cK_*}B_j} = 0$.
	Setting $B_0 = \RR^{d_1}\setminus\bigcup_{j\in\cK_*}B_j$, the family $(B_j)_{j\in\cK_0}$, $\cK_0=\cK_*\cup\{0\}$, is then an essential
	covering of $\RR^{d_1}$ with
	\[
		\sum_{j\in\cK_0}\indic_{B_j}
		\leq
		K_{\mathrm{Bes}}^{d_1}
		=:
		\kappa
		.
	\]
	Set $\cK := \cK_0 \times (L\ZZ)^{d_2}$ and $Q_k := B_{k^{(1)}} \times \Lambda_{L}(k^{(2)})$	for
	$k = (k^{(1)},k^{(2)}) \in \cK$. Then, $(Q_k)_{k \in \cK}$ is an essential covering of $\RR^d$ with
	\[
		\sum_{k\in\cK}\indic_{Q_k}
		\leq
		\kappa
		.
	\]
	Note also that by construction we have
	\[
		\cK_c = \{ k \in \cK \colon Q_k \cap (B(0,C\lambda^{1/2}) \times \RR^{d_2}) \neq \emptyset \} = \cK_* \times (L\ZZ)^{d_2}
	\]
	and $Q_k = Q((y_{k^{(1)}},k^{(2)}))$ for $k \in \cK_c$.

	We show that $(Q_k)_{k \in \cK}$ satisfies Hypothesis~\ref{hypothesis}:
	It is easy to see that (i) is satisfied with $l_k = (2\rho(y_{k^{(1)}}),\dots,2\rho(y_{k^{(1)}}),L,\dots,L)$.
	Since $y_{k^{(1)}} \in A$ for all $k\in\cK_c$, we have $|y_{k^{(1)}}| \leq C\lambda^{1/2}$ and, consequently,
	\[
		\rho(y_{k^{(1)}})
		\leq
		2RC\lambda^{(1-\eps)/2}
		\quad\text{for all}\
		k\in\cK_c
		.
	\]
	Combining this with the identity for $l_k$ stated above, we obtain
	\[
		\norm{l_k}_2
		\leq
		\norm{l_k}_1
		\leq
		2d_1\rho(y_{k^{(1)}}) + d_2L
		\leq
		D\lambda^{(1-\eps)/2}
		,\quad
		D=d(4RC + L)
		.
	\]
	This proves condition (ii). Thus, Hypothesis~\ref{hypothesis} is satisfied.
	Using again $|y_{k^{(1)}}| \leq C\lambda^{1/2}$ for $k\in\cK_c$, we see that the assumption on the set $\omega$ yields
	\[
		\frac{|\omega\cap Q_k|}{|Q_k|}
		\geq
		\gamma^{1+(C\lambda^{1/2})^a}
		\geq
		\bigl( \gamma^{1+C^a} \bigr)^{\lambda^{a/2}}.
	\]
	We now apply Theorem~\ref{thm:gen} with $\gamma$ replaced by $\gamma^{1+C^a}$.
	Since the $Q_k$ are centrally symmetric for every $k \in \cK_c$, this gives
	\[
		\norm{f}_{L^2(\omega)}^2\geq \frac{3}{\kappa}\Biggl[\frac{\gamma^{1+C^a}}{24\cdot 2^d d^{1+d/2}}\Biggr]^{7\bigl( 1600\euler D(D+1) + \log(4\kappa^{1/2})\bigr)
		\lambda^{1-(\eps-a)/2}}\norm{f}_{L^2(\RR^d)}^2
		.
	\]
	For some appropriately chosen, universal constant $K \geq 1$
	we have $24\cdot 2^d d^{1+d/2} \leq (Kd)^d$, $\kappa \leq K^d$, $1+{C}^a \leq  (1 + K^a) d^{3a/2}$, and
	$D \leq Kd^{5/2}(R+L)$.
	Hence, possibly adapting the constant $K$, it is easy to see that
	\eqs{
		\bigl(\log(\kappa)+\log(24\cdot 2^d d^{1+d/2})\bigr)\cdot(1+{C}^a)\cdot 7\bigl( 1600\euler D(D+1) &+ \log(4\kappa^{1/2})\bigr)
		\\
		&\leq
		K^{1+a} d^{(13+3a)/2} (1+R + L)^2
		,
	}
	which gives the precise constant in the statement.

	If $d_1 = d$, then $d_2=0$ and the second factors in the tensor sets are empty.
	In this case the proof is similar, but even simpler.
\end{proof}%

\subsection{Spectral inequalities with parts of free potential}

Via Fourier transform we can reduce the
more general case $\cI\setminus\cJ\neq\emptyset$ to the previously studied situation $\cJ = \{1,\dots,d\}$.
More precisely, suppose that $\cI\cup\cJ = \{1,\dots,d\}$ while $\cJ \neq \{1,\dots,d\}$.
Then the partial Fourier transform
\[
	(\cF_{\cI\setminus\cJ} f)(x)
	=
	\frac{1}{(2\pi)^{m/2}}
	\int_{\RR^d_{\cI\setminus\cJ}} f(\eta,x^{(2)}) \euler^{-i \eta\cdot x^{(1)}}\Diff{\eta}
\]
where $m = \#(\cI\setminus\cJ)$ and $x = (x^{(1)},x^{(2)})$ with
$x^{(1)} \in \RR^d_{\cI\setminus\cJ}$ and $x^{(2)} \in \RR^d_{(\cI\setminus\cJ)^\complement}$
satisfies
\[
	H_{\cI,\cJ}
	=
	\cF_{\cI\setminus\cJ}^{-1} H_{\cI\cap\cJ,\cI\cup\cJ}\cF_{\cI\setminus\cJ}
	=
	\cF_{\cI\setminus\cJ}^{-1} H_{\cI\cap\cJ}\cF_{\cI\setminus\cJ}
	,
\]
see Lemma~\ref{lem:operators-and-partial-ft}.
Thus spectral inequalities of the form $\norm{\indic_\omega f}_{2}^2\geq C \norm{f}_{2}^2$ for $H_{\cI\cap\cJ}$
translate directly to spectral inequalities for $H_{\cI,\cJ}$ of the form $\norm{Bf}_{2}^2\geq C \norm{f}_{2}^2$
with $B = \cF_{\cI\setminus\cJ}^{-1}\indic_\omega\cF_{\cI\setminus\cJ}$ and the same constant $C > 0$.
Since $H_{\cI\cap\cJ}$ is an operator of the form discussed
in the previous parts of Section~\ref{sec:spectral-inequality}, we get
analogous results for $H_{\cI,\cJ}$.
This is exemplified in the following result for the situation of Theorem~\ref{thm:ucp}.

\begin{corollary}\label{cor:spectral-inequality-free-potential}
	Suppose $\cI\cup\cJ = \{1,\dots,d\}$, $\cI\setminus\cJ\neq\emptyset$, and
	let $\omega$ be as in \eqref{eq:control-set-fixed-cubes}.
	Then, there is a universal constant $K\geq 1$ such that for every $\lambda\geq 1$
	and all $f\in \Ran P_{(-\infty,\lambda]}(H_{\cI,\cJ})$ we have
	\bes
		\norm{Bf}_{L^2(\RR^d)}^2
		\geq
		3\Bigl( \frac{\gamma}{K^d} \Bigr)^{K d^{1+a} (1+\rho)^2\lambda^{(1+a)/2}}
		\norm{f}_{L^2(\RR^d)}^2
		,
	\ees
	where $B = \cF_{\cI\setminus\cJ}^{-1}\indic_\omega\cF_{\cI\setminus\cJ}$.
\end{corollary}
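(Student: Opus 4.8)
The plan is to carry out literally the Fourier reduction advertised just before the statement. The partial Fourier transform $\cF_{\cI\setminus\cJ}$ is a unitary operator on $L^2(\RR^d)$, and by Lemma~\ref{lem:operators-and-partial-ft} together with the standing assumption $\cI\cup\cJ=\{1,\dots,d\}$ (which gives $H_{\cI\cap\cJ,\cI\cup\cJ}=H_{\cI\cap\cJ}$) it conjugates $H_{\cI,\cJ}$ to the partial harmonic oscillator $H_{\cI\cap\cJ}=-\Delta+\abs{x_{\cI\cap\cJ}}^2$:
\[
	\cF_{\cI\setminus\cJ}\,H_{\cI,\cJ}\,\cF_{\cI\setminus\cJ}^{-1}=H_{\cI\cap\cJ}.
\]
The first step is to promote this to an identity of spectral projections. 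Unitarily equivalent self-adjoint operators have unitarily equivalent spectral measures, so for every $\lambda\geq 1$
\[
	P_{(-\infty,\lambda]}(H_{\cI,\cJ})=\cF_{\cI\setminus\cJ}^{-1}\,P_{(-\infty,\lambda]}(H_{\cI\cap\cJ})\,\cF_{\cI\setminus\cJ},
\]
and hence $f\mapsto g:=\cF_{\cI\setminus\cJ}f$ is a unitary bijection of $\Ran P_{(-\infty,\lambda]}(H_{\cI,\cJ})$ onto $\Ran P_{(-\infty,\lambda]}(H_{\cI\cap\cJ})$.

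The second step is formal bookkeeping with norms. Since $\cF_{\cI\setminus\cJ}$ is unitary and $B=\cF_{\cI\setminus\cJ}^{-1}\indic_\omega\cF_{\cI\setminus\cJ}$,
\[
	\norm{Bf}_{L^2(\RR^d)}^2=\norm{\indic_\omega\cF_{\cI\setminus\cJ}f}_{L^2(\RR^d)}^2=\norm{g}_{L^2(\omega)}^2,\qquad \norm{f}_{L^2(\RR^d)}^2=\norm{g}_{L^2(\RR^d)}^2.
\]
Therefore the asserted estimate for $f\in\Ran P_{(-\infty,\lambda]}(H_{\cI,\cJ})$ is equivalent to the genuine spectral inequality $\norm{g}_{L^2(\omega)}^2\geq C\norm{g}_{L^2(\RR^d)}^2$ for all $g\in\Ran P_{(-\infty,\lambda]}(H_{\cI\cap\cJ})$, with the very same constant $C$, and the Fourier transform has disappeared from the problem.

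The third step is to read off $C$ from Theorem~\ref{thm:ucp} applied to $H_{\cI\cap\cJ}$, whose growth directions are indexed by $\cI\cap\cJ$. Under $\cI\cup\cJ=\{1,\dots,d\}$ the sets $\cI\cap\cJ$, $\cI\setminus\cJ$, $\cJ\setminus\cI$ partition $\{1,\dots,d\}$, and the set $\omega$ of the hypothesis meets the density condition of Theorem~\ref{thm:ucp} for $H_{\cI\cap\cJ}$ (the index set $\cI$ in \eqref{eq:control-set-fixed-cubes} being read as $\cI\cap\cJ$ here, which is the correct requirement since elements of $\Ran P_{(-\infty,\lambda]}(H_{\cI\cap\cJ})$ do not decay in the $\cI\setminus\cJ$-directions); that theorem then supplies the universal constant $K\geq 1$ and the bound claimed in the statement, and plugging it back through the second step finishes the proof. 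In the degenerate case $\cI\cap\cJ=\emptyset$ one has $H_{\cI\cap\cJ}=-\Delta$ and the density condition becomes plain thickness, in which case one invokes Kovrijkine's inequality (Proposition~\ref{prop:ucp-laplace}) instead; this only improves the exponent and is absorbed into $K$.

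The genuinely non-trivial ingredient — and the step I expect to demand care — is not in the corollary itself but in Lemma~\ref{lem:operators-and-partial-ft}: on $\cS(\RR^d)$ the identity $\cF_{\cI\setminus\cJ}H_{\cI,\cJ}\cF_{\cI\setminus\cJ}^{-1}=H_{\cI\cap\cJ}$ is the one-line symbol computation $x_j^2\leftrightarrow-\partial_{\xi_j}^2$ in the variables $j\in\cI\setminus\cJ$, but passing to the self-adjoint realizations and checking that the operator domains match requires the form-theoretic description of the partial harmonic oscillators from Appendix~\ref{sec:partharmOsc}. Everything downstream of that identity — the spectral-projection identity and the two norm identities above — is immediate, so the corollary reduces to a short deduction once the appendix lemma is in hand.
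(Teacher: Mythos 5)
Your proposal is correct and follows exactly the Fourier-transform reduction sketched in the paper in the paragraph preceding the corollary: unitary conjugation of the spectral projections via Lemma~\ref{lem:operators-and-partial-ft}, translation of $\norm{Bf}_{L^2(\RR^d)}$ into $\norm{\cF_{\cI\setminus\cJ}f}_{L^2(\omega)}$ by unitarity, and application of Theorem~\ref{thm:ucp} to $H_{\cI\cap\cJ}$. Your reading that the subscript $\cI$ in \eqref{eq:control-set-fixed-cubes} must be understood as $\cI\cap\cJ$ is indeed the intended (and necessary) interpretation, and your remark on the degenerate case $\cI\cap\cJ=\emptyset$ via Proposition~\ref{prop:ucp-laplace} is a sound addition.
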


Note that the case $\cI\cup\cJ \neq \{1,\dots,d\}$
can be reduced to the present case provided the sensor sets are chosen as appropriate Cartesian products.

\begin{remark}
	If $\omega$ is Borel measurable, then $B = \cF_{\cI\setminus\cJ}^{-1}\indic_\omega\cF_{\cI\setminus\cJ}$ can be interpreted by functional calculus.
	To this end, let $X_1,\dots,X_d$ be the strongly commuting position operators $X_j f = x_j f$.
	Then the multiplication operator $\indic_\omega$ agrees with $\indic_\omega(X_1,\dots,X_d)$ defined by
	joint functional calculus, cf.~\cite[Chapter~5.5]{Schmuedgen-12}.
	Since the momentum operators $P_1,\dots,P_d$ with $P_j f = -\ii\partial_j f$ correspond to the position operators by
	$\cF_{\{j\}}^{-1} X_j \cF_{\{j\}} = P_j$, we have
	\[
		B
		=
		\indic_\omega(R_1,\dots,R_d)
		,
		\quad \text{where} \quad
		R_j = \begin{cases}
			X_j, & j\in (\cI\setminus\cJ)^\complement\\
			P_j, & j\in \cI\setminus\cJ
		\end{cases}
		.
	\]
	Note that $R_1,\dots,R_d$ are likewise strongly commuting.
\end{remark}
%
%
\section{Applications, extensions, and comparison to previous results}\label{sec:applications-extensions}

In this section we give examples of quadratic differential operators that fit into our general framework.
These examples also present the application of the theory and extend previous results
by different authors, see, e.g., \cite{BeauchardPS-18,BeauchardJPS-21,MartinPS-20-arxiv,Alphonse-20,DickeSV-21}.

Moreover, we give a sharper version of Theorem~\ref{thm:observability} in the case where the operator $-A$ is itself a partial harmonic oscillator.
Furthermore, we also establish a result for isotropic Shubin operators, which are non-quadratic differential operators.

\subsection{Partial harmonic oscillators}

In the situation of the partial harmonic oscillator itself, we do not rely on Theorem~\ref{thm:obs_and_control} above.
Instead, we can use the stronger result from \cite{NakicTTV-20}, since, as already noted in the introduction, the dissipation estimate
is trivial for all times $t > 0$ if we choose the projections onto the spectral subspace of the non-negative self-adjoint operator for
which we want to establish observability or, equivalently, null-controllability.
This is again demonstrated for $\omega$ as in Theorem~\ref{thm:ucp}, but with $a=1/2$ for simplicity.
The theorem is a direct consequence of \cite[Theorem~2.8]{NakicTTV-20}.

\begin{theorem}[Observability for the partial harmonic oscillator]\label{thm:observability-sharp-control-costs}
	Let $H_\cI = -\Delta + |x_\cI|^2$ for $\cI\subset\{1,\dots,d\}$ be the partial harmonic oscillator and
	let $\omega$ be as in \eqref{eq:control-set-fixed-cubes} above with $a=1/2$.
	Then the abstract Cauchy problem \eqref{eq:controlled-abstract-Cauchy-problem} with $A = -H_\cI$ and $B = \indic_\omega$
	is observable and
	\[
		C_{\mathrm{obs}}^2
		\leq
		\frac{c_1}{T}\exp\Bigl( \frac{c_2}{T^{3}} (1+\rho)^{8}\gamma^{-4} \Bigr)
		,
	\]
	where $c_1,c_2>0$ depend only on the dimension $d$.
\end{theorem}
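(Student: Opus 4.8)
The plan is to bypass the general Lebeau--Robbiano machinery of Theorem~\ref{thm:obs_and_control} and instead feed the ingredients directly into the sharpened observability estimate of \cite[Theorem~2.8]{NakicTTV-20}, which delivers explicit control costs but requires the dissipation estimate to hold for \emph{all} $t\in(0,T/2]$ and the generator to be non-negative self-adjoint. Here $A=-H_\cI$ with $H_\cI=-\Delta+|x_\cI|^2$, which is non-negative self-adjoint with $\sigma(H_\cI)\subset[0,\infty)$ and generates the contraction semigroup $(\euler^{-tH_\cI})_{t\ge0}$. Taking $P_\lambda=\indic_{(-\infty,\lambda]}(H_\cI)$, the spectral theorem gives at once
\[
	\norm{(1-P_\lambda)\euler^{-tH_\cI}g}_{L^2(\RR^d)}^2
	\le
	\euler^{-2t\lambda}\norm{g}_{L^2(\RR^d)}^2
\]
for \emph{all} $t>0$, $\lambda\ge0$, and $g\in L^2(\RR^d)$; thus \eqref{eq:dissipation} holds on the whole half-line with $d_2=1$, $d_3=2$, $\gamma_2=\gamma_3=1$.

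The spectral inequality is nothing but Theorem~\ref{thm:ucp} specialized to $a=1/2$ (with the cube scale, here written $\rho$, playing the role of $L$ in Theorem~\ref{thm:ucp}): for $f\in\Ran P_\lambda$ one has $\norm{f}_{L^2(\omega)}^2\ge 3(\gamma/K^d)^{Kd^{3/2}(1+\rho)^2\lambda^{3/4}}\norm{f}_{L^2(\RR^d)}^2$. Rewriting this in the form \eqref{eq:spectral} yields $\gamma_1=(1+a)/2=3/4<1=\gamma_2$, a universal constant $d_0\le1$, and
\[
	d_1
	=
	Kd^{3/2}(1+\rho)^2\log(K^d/\gamma)
	\le
	c(d)\,(1+\rho)^2\gamma^{-1},
\]
where I used $\log(1/\gamma)\le\gamma^{-1}$ for $\gamma\in(0,1)$ and absorbed the polynomial $d$-dependence into $c(d)$. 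Now I would invoke \cite[Theorem~2.8]{NakicTTV-20}: its hypotheses are met with the parameters just listed, so it produces observability with a bound of the shape $C_{\mathrm{obs}}^2\le \frac{c_1(d)}{T}\exp\big(c_2(d)\,d_1^{\gamma_2/(\gamma_2-\gamma_1)}\,T^{-\gamma_1\gamma_3/(\gamma_2-\gamma_1)}\big)$. Since $\gamma_2/(\gamma_2-\gamma_1)=4$ and $\gamma_1\gamma_3/(\gamma_2-\gamma_1)=3$, substituting the estimate for $d_1$ gives $d_1^4\le c(d)(1+\rho)^8\gamma^{-4}$ and the exponent $T^{-3}$; collecting the dimensional constants into $c_1,c_2$ yields exactly the asserted bound.

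The only real work is bookkeeping inside \cite[Theorem~2.8]{NakicTTV-20}: one must pin down the exact power of $d_1$ (and the harmless dependence on $d_2,d_3$) that enters their control-cost estimate, check that the $1/T$ prefactor is genuinely there, and accept the deliberately lossy passage $\log(1/\gamma)\lesssim\gamma^{-1}$ from logarithmic to polynomial $\gamma$-dependence. It is also worth verifying explicitly that their theorem covers merely non-negative (not strictly positive) self-adjoint generators, which matters only in the degenerate case $\cI=\emptyset$, where $H_\cI=-\Delta$ and $0\in\sigma(H_\cI)$; in that case $\omega$ is simply thick and the statement reduces to the known sharp control cost for the free heat semigroup.
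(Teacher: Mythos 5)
Your proposal takes exactly the same route as the paper: bypassing the Lebeau--Robbiano machinery of Theorem~\ref{thm:obs_and_control} and feeding the trivial dissipation estimate (from the spectral theorem for the non-negative self-adjoint $H_\cI$) together with the spectral inequality of Theorem~\ref{thm:ucp} (at $a=1/2$) directly into \cite[Theorem~2.8]{NakicTTV-20}, which the paper itself declares to be a ``direct consequence.'' Your bookkeeping of the exponents $\gamma_2/(\gamma_2-\gamma_1)=4$ and $\gamma_1\gamma_3/(\gamma_2-\gamma_1)=3$, the lossy bound $\log(K^d/\gamma)\le c(d)\gamma^{-1}$, and the identification of the paper's $\rho$ with the cube scale $L$ of \eqref{eq:control-set-fixed-cubes} correctly fill in the details the paper leaves implicit.
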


\begin{remark}
	Following Remark~\ref{rem:fractional-operator} we get an analogous result for the fractional harmonic oscillator $H_\cI^\theta$, $\theta>3/4$.
\end{remark}

Note that here $C_{\mathrm{obs}} \in \mathcal{O}(1/T^{1/2})$ as $T\to\infty$ whereas Theorem~\ref{thm:observability}
merely establishes $C_{\mathrm{obs}} \in \mathcal{O}(1)$ as $T\to\infty$.
Although a result similar to \cite{NakicTTV-20} has been obtained in the non-self-adjoint case \cite{GallaunST-20}, this is
not applicable in the situation of Theorem~\ref{thm:observability}: our dissipation estimate for general quadratic differential operators $A$ with
$S(A) = S(H_\cI)$ only holds for small times $t < 1$ but \cite{GallaunST-20} requires the dissipation estimate to hold for all times $t \leq T/2$.
In particular, this shows that the dissipation estimate from Theorem~\ref{thm:dissipation} is not optimal in the particular
case where $-A$ is a partial harmonic oscillator.

\subsection{Quadratic differential operators with zero singular space}\label{ssec:zero-singular-space}

If $q$ is any complex quadratic form with singular space $S(q) = \{0\}$ satisfying $\Re q \leq 0$, then Theorem~\ref{thm:dissipation}
holds for the projections onto the spectral subspace of the harmonic oscillator.
This situation has already been considered in \cite{BeauchardJPS-21,MartinPS-20-arxiv} based on the dissipation estimate from
\cite{BeauchardPS-18}. In this setting, the spectral inequality takes the form of a \emph{Logvinenko-Sereda type inequality} for
Hermite functions up to a given degree. The latter was first established in \cite{BeauchardJPS-21} for thick sensor sets $\omega$
and subsequently generalized to not necessarily thick sets in \cite{MartinPS-20-arxiv,DickeSV-21}.
For $\omega$ as in  \eqref{eq:decay-twosided} we obtain the following result, which is on one hand
a particular case of Corollary~\ref{cor:control} and on the other generalizes the just mentioned references.

\begin{corollary}
	Let $ S(A) = \{0\}$.
	There are sensor sets of finite measure such that the abstract Cauchy problem \eqref{eq:controlled-abstract-Cauchy-problem} is
	null-controllable.
\end{corollary}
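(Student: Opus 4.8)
The plan is to deduce the statement as a special case of Corollary~\ref{cor:control}, obtained by taking $\cI = \{1,\dots,d\}$ and selecting, among the sensor sets admitted by Theorem~\ref{thm:ucp}, one of finite Lebesgue measure. First I would record the elementary observation that $S(A) = \{0\}$ is the same as $S(A) = \RR^d_{\cI^\complement}\times\{0\}$ with $\cI = \{1,\dots,d\}$ (so that $\cI^\complement = \emptyset$); thus $A$ is a quadratic differential operator whose singular space coincides with that of the \emph{full} harmonic oscillator $H_\cI = -\Delta+|x|^2$, and both Theorem~\ref{thm:dissipation} and Theorem~\ref{thm:ucp} apply without change.

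Next I would produce an explicit finite-measure sensor set. Fixing $a,\gamma\in(0,1)$, set $\omega = \bigcup_{k\in\ZZ^d}\Lambda_{r_k}(k)$ with $r_k = \gamma^{(1+|k|^a)/d}$, which is exactly the set from Example~\ref{ex:specIneq-fixed-cubes} in the case $\cI = \{1,\dots,d\}$ (alternatively, any $\omega$ as in \eqref{eq:decay-twosided}). Two things are to be checked: that $\omega$ satisfies the density condition \eqref{eq:control-set-fixed-cubes} of Theorem~\ref{thm:ucp} --- immediate from the identity $|\omega\cap\Lambda_1(k)| = r_k^d = \gamma^{1+|k|^a}$ computed in that example --- and that $|\omega| < \infty$. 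For the latter, $|\omega| = \sum_{k\in\ZZ^d}\gamma^{1+|k|^a} = \gamma\sum_{k\in\ZZ^d}\euler^{-|k|^a\log(1/\gamma)}$, and since $\log(1/\gamma) > 0$ and $a > 0$ the stretched-exponential summand decays faster than the polynomial growth of $\#\{k : |k| \leq R\}$, so the series converges; this finiteness is also asserted directly in Example~\ref{ex:specIneq-fixed-cubes}.

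With such an $\omega$ the hypotheses of Corollary~\ref{cor:control} hold for our operator $A$, which gives null-controllability of \eqref{eq:controlled-abstract-Cauchy-problem} with $\cOp = \indic_\omega$. Spelled out, Theorem~\ref{thm:dissipation} supplies the dissipation estimate \eqref{eq:dissipation} (with respect to the spectral projections of $H_\cI$), Theorem~\ref{thm:ucp} supplies the spectral inequality \eqref{eq:spectral} with $\gamma_1 = (1+a)/2 < 1$, Theorem~\ref{thm:obs_and_control} then yields observability for $A^*$ (whose symbol $\overline q$ has the same singular space and rotation exponent), and the Hilbert uniqueness method described before Corollary~\ref{cor:control} converts this into null-controllability. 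I do not expect a genuine obstacle: the only delicate point is ensuring that the chosen $\omega$ is simultaneously thin enough to have finite measure and still thick enough --- in the weak, location-dependent sense of \eqref{eq:control-set-fixed-cubes} --- to yield the spectral inequality with subunit exponent $\gamma_1 < 1$, which is precisely the balance exhibited in Example~\ref{ex:specIneq-fixed-cubes}.
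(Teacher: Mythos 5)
Your proposal is correct and follows essentially the same route as the paper: the paper presents this corollary as a particular case of Corollary~\ref{cor:control} with $\cI = \{1,\dots,d\}$ (so $H_\cI$ is the full harmonic oscillator), using a finite-measure $\omega$ as in \eqref{eq:decay-twosided} or Example~\ref{ex:specIneq-fixed-cubes}. Your additional verifications (that $S(A)=\{0\}=S(H_{\{1,\dots,d\}})$, that $|\omega|<\infty$ via stretched-exponential decay, and the chain through Theorem~\ref{thm:dissipation}, Theorem~\ref{thm:ucp}, Theorem~\ref{thm:obs_and_control}, and the Hilbert uniqueness method) simply spell out the details the paper leaves implicit.
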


Sensor sets as in the above corollary were not accessible before in this context.

\subsection{Null-controllability for isotropic Shubin operators}

Besides the above studied quadratic differential operators our results can also be used to extend the observability from \cite{Alphonse-20b}
for the semigroup generated by the negative of the isotropic Shubin operator
$(-\Delta)^k + |x|^{2k}$ for $k\in\NN$.
In fact, combining our spectral inequality
in Theorem~\ref{thm:ucp} with the dissipation estimate from the proof of
\cite[Theorem~2.8]{Alphonse-20b} shows, amongst others, the following result, where $\omega$ is again as in \eqref{eq:decay-twosided}.

\begin{corollary}
	For all $k\in\NN$ and every sensor set as in \eqref{eq:decay-twosided}
	the abstract Cauchy problem \eqref{eq:controlled-abstract-Cauchy-problem}
	with $A = -(-\Delta)^k - |x|^{2k}$ is null-controllable.
\end{corollary}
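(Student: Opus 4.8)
The plan is to run the Lebeau--Robbiano scheme of Theorem~\ref{thm:obs_and_control} for the self-adjoint generator $A = -\bigl((-\Delta)^k + |x|^{2k}\bigr)$, feeding it the spectral inequality of Theorem~\ref{thm:ucp} in the isotropic case $\cI = \{1,\dots,d\}$ (so that $H_\cI = -\Delta+|x|^2$ is the full harmonic oscillator) together with the dissipation estimate contained in the proof of \cite[Theorem~2.8]{Alphonse-20b}, and then to upgrade the resulting observability to null-controllability by self-adjointness and the Hilbert uniqueness method, exactly as in the discussion preceding Corollary~\ref{cor:control}.

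First I would record that the isotropic Shubin operator $(-\Delta)^k + |x|^{2k}$ is non-negative and self-adjoint on $L^2(\RR^d)$, so that $A$ generates a strongly continuous contraction semigroup $(\cT(t))_{t\ge 0}$ and $A^* = A$. Next I would invoke the anisotropic Gelfand--Shilov smoothing bounds for $(\cT(t))_{t\ge 0}$ established in \cite{Alphonse-20b}: running the argument of Proposition~\ref{prop:dissipation-hermite} on them (equivalently, Theorem~\ref{thm:general-dissipation} with $\cI = \cJ = \{1,\dots,d\}$) produces a dissipation estimate of the form \eqref{eq:dissipation} for the harmonic-oscillator projections $P_\lambda = \indic_{(-\infty,\lambda]}(-\Delta+|x|^2)$, with $\gamma_2 = 1$ and some $\gamma_3, d_2, d_3, t_0 > 0$ depending only on $d$ and $k$.

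It then remains to verify the spectral inequality. The lower bound in \eqref{eq:decay-twosided} implies, after a routine comparison of balls with cubes (the cube $\Lambda_{2\rho}(m)$ contains the ball $B(m,\rho)$, whose relative volume $\tau_d/2^d\le 1$ can be absorbed into the base $\gamma$), a density bound of the type \eqref{eq:control-set-fixed-cubes} with $\cI = \{1,\dots,d\}$, $L = 2\rho$, $a = a^-$, and $\gamma$ replaced by $(\tau_d/2^d)\gamma$. Hence Theorem~\ref{thm:ucp} supplies \eqref{eq:spectral} with $\gamma_1 = (1+a^-)/2$. Since $a^- < 1$ we have $\gamma_1 < 1 = \gamma_2$, so both hypotheses of Theorem~\ref{thm:obs_and_control} are met and \eqref{eq:controlled-abstract-Cauchy-problem} with $\cOp = \indic_\omega$ is final-state observable; because $A^* = A$, the Hilbert uniqueness method (see the paragraph before Corollary~\ref{cor:control}) then yields null-controllability.

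The main obstacle is the middle step: one has to make sure that the proof of \cite[Theorem~2.8]{Alphonse-20b} --- whose statement only concerns thick sets --- genuinely yields smoothing estimates (or a dissipation estimate) relative to the \emph{harmonic oscillator}, and not to the Laplacian or the Shubin operator itself, and with the power of $\lambda$ in \eqref{eq:dissipation} strictly exceeding $\gamma_1 = (1+a^-)/2$. Extracting those bounds and re-deriving \eqref{eq:dissipation} along the lines of Proposition~\ref{prop:dissipation-hermite} is where the real work sits; the ball-to-cube comparison and the self-adjoint duality argument are entirely routine.
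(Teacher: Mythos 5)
Your proposal matches the paper's (very terse, one-sentence) proof exactly: convert the ball-density lower bound in \eqref{eq:decay-twosided} to the cube form \eqref{eq:control-set-fixed-cubes} with $\cI=\{1,\dots,d\}$, feed Theorem~\ref{thm:ucp} (giving $\gamma_1=(1+a^-)/2<1$) and the Gelfand--Shilov dissipation estimate for the Shubin semigroup extracted from \cite[Theorem~2.8]{Alphonse-20b} into the Lebeau--Robbiano scheme, and pass to null-controllability by HUM using that $A$ is self-adjoint. Your cautionary remark about the middle step is exactly the part the paper does not spell out either, and your reference to ``running the argument of'' Proposition~\ref{prop:dissipation-hermite} correctly signals that one cannot literally invoke Theorem~\ref{thm:general-dissipation} (stated only for quadratic $A$) but must rerun its proof on the smoothing bounds from \cite{Alphonse-20b}.
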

See also \cite{Martin-22} and \cite{DickeSV-22b} for very recent related results.

\subsection{Generalized Ornstein-Uhlenbeck operators}

For symmetric, positive semidefinite matrices $Q,R\in\RR^{d\times d}$ and a matrix $B\in\RR^{d\times d}$, we consider the generalized Ornstein-Uhlenbeck operators
\[
	A
	=
	\frac{1}{2}\mathrm{Tr}(Q \nabla_x^2) - \frac{1}{2} Rx\cdot x + Bx\cdot \nabla_x
\]
on $L^2(\RR^d)$, where $\mathrm{Tr}$ denotes the trace and $\nabla_x^2= (\partial_{x_j}\partial_{x_k})_{j,k=1}^d$.
We here assume without loss of generality that $\mathrm{Tr}(B) = 0$, as $A$ is in this case a quadratic differential operator
with symbol
\[
	q(x,\xi)
	=
	-\frac{1}{2}Q\xi\cdot\xi -\frac{1}{2}Rx\cdot x - \ii Bx\cdot\xi
	.
\]
Note that without the assumption on the trace, the associated semigroup changes by the constant factor $\exp(-\mathrm{Tr}(B)/2)$, which is not significant for our results.

We now invoke a result from \cite[Proof of Theorem~5.2]{Alphonse-20}.

\begin{lemma}\label{lem:eq:Ornstein-Uhlenbeck-singular-space}
	Suppose that $B$ and $Q^{1/2}$ satisfy the Kalman rank condition, i.e., the rank of the matrix
	$(Q^{1/2},BQ^{1/2},\dots,B^{d-1}Q^{1/2})\in\RR^{d\times d^2}$ is $d$, and that there is a set $\cI \subset \{1,\dots,d\}$ such that
	\be\label{eq:powers-R-Ornstein-Uhlenbeck-singular-space}
		\Biggl(\bigcap_{j = 0}^{d-1} \ker(RB^j) \Biggr)^\perp
		=
		\RR_{\cI}^d
		.
	\ee
	Then $S(A)^\perp = \RR_{\cI}^d \times \RR^d$.
\end{lemma}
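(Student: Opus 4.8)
The plan is to read off the singular space $S(A)$ directly from the Hamilton map of the symbol $q$, exploiting the fact that for Ornstein--Uhlenbeck symbols this map has a very rigid block structure; the singular space then factors as a product, one factor controlled by $R$ and $B$ and the other by $Q$ and $B$. (One may also simply invoke the computation in \cite[Proof of Theorem~5.2]{Alphonse-20}; the sketch below records the argument for completeness.) First I would note that, since $\mathrm{Tr}(B)=0$, the Weyl symbol of $Bx\cdot\nabla_x$ has no constant term, so $A$ really is the quadratic differential operator with symbol $q(x,\xi)=-\tfrac12 Q\xi\cdot\xi-\tfrac12 Rx\cdot x-\ii Bx\cdot\xi$, and $\Re q=-\tfrac12 Q\xi\cdot\xi-\tfrac12 Rx\cdot x\le 0$ since $Q,R$ are positive semidefinite, so Proposition~\ref{prop:m-dissipative} applies. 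Then I would compute the Hamilton map from the definition in Section~\ref{sec:previous-results}; it comes out as the block matrix
\[
	F=\frac12\begin{pmatrix} -\ii B & -Q\\ R & \ii B^{\top}\end{pmatrix},
	\qquad
	\Re F=\frac12\begin{pmatrix} 0 & -Q\\ R & 0\end{pmatrix},
	\qquad
	\Im F=\frac12\begin{pmatrix} -B & 0\\ 0 & B^{\top}\end{pmatrix}
\]
(signs depending on the Fourier convention are immaterial for kernels).

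Since $\Im F$ is block diagonal, $(\Im F)^{j}$ is block diagonal with blocks proportional to $B^{j}$ and $(B^{\top})^{j}$, so $\Re F(\Im F)^{j}$ is block anti-diagonal with blocks proportional to $Q(B^{\top})^{j}$ and $RB^{j}$. Hence $\ker\!\bigl[\Re F(\Im F)^{j}\bigr]$ splits as a product with respect to the $(x,\xi)$-decomposition of $\RR^{2d}$; intersecting over $j=0,\dots,2d-1$ and then truncating the range of $j$ to $\{0,\dots,d-1\}$ by the Cayley--Hamilton theorem gives
\[
	S(A)=V_x\times V_\xi,\qquad
	V_x=\bigcap_{j=0}^{d-1}\ker\!\bigl(RB^{j}\bigr),\qquad
	V_\xi=\bigcap_{j=0}^{d-1}\ker\!\bigl(Q(B^{\top})^{j}\bigr).
\]
Consequently $S(A)^{\perp}=V_x^{\perp}\times V_\xi^{\perp}$, and by hypothesis \eqref{eq:powers-R-Ornstein-Uhlenbeck-singular-space} we already have $V_x^{\perp}=\RR^d_{\cI}$; it remains only to show $V_\xi=\{0\}$, i.e.\ $V_\xi^{\perp}=\RR^d$.

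For the final step I would use that $Q$ and $Q^{1/2}$ have the same kernel, so $\ker\!\bigl(Q(B^{\top})^{j}\bigr)=\ker\!\bigl(Q^{1/2}(B^{\top})^{j}\bigr)=\bigl(\Ran\, B^{j}Q^{1/2}\bigr)^{\perp}$; therefore $V_\xi=\bigl(\sum_{j=0}^{d-1}\Ran\, B^{j}Q^{1/2}\bigr)^{\perp}$, which is the orthogonal complement of the column space of the matrix $(Q^{1/2},BQ^{1/2},\dots,B^{d-1}Q^{1/2})$. The Kalman rank condition says precisely that this column space is all of $\RR^d$, so $V_\xi=\{0\}$ and hence $S(A)^{\perp}=\RR^d_{\cI}\times\RR^d$. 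The one place that genuinely needs care is this bridge: the Kalman condition is phrased in terms of the \emph{ranges} of $B^{j}Q^{1/2}$, whereas the singular space is defined through \emph{kernels}, and the matching hinges on the identity $\ker Q^{1/2}=\ker Q$ together with the passage to orthogonal complements. A secondary, purely bookkeeping nuisance is keeping the transposes in the Hamilton map straight — obtaining $Q(B^{\top})^{j}$ rather than $QB^{j}$ in $V_\xi$ is what makes the Kalman condition the correct hypothesis — but this is routine once the block structure of $F$ is pinned down.
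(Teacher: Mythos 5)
Your proof is correct, and it reproduces the computation that the paper delegates to an external reference. The paper does not prove this lemma itself; it simply says ``We now invoke a result from \cite[Proof of Theorem~5.2]{Alphonse-20}.'' Your argument is precisely the content of that cited computation: you write down the Hamilton map of the Ornstein--Uhlenbeck symbol, observe that $\Re F$ is block anti-diagonal and $\Im F$ block diagonal (so that $\Re F(\Im F)^j$ is block anti-diagonal with blocks proportional to $RB^j$ and $Q(B^\top)^j$), factor $S(A)$ as a product $V_x\times V_\xi$, truncate the range of $j$ by Cayley--Hamilton, and then convert the kernel description of $V_\xi$ into the range description in the Kalman condition via $\ker Q^{1/2}=\ker Q$ and $\ker M=(\Ran M^\top)^\perp$. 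All the steps check out, including the transposes: the top-right block of $\Re F(\Im F)^j$ really does involve $(B^\top)^j$, so that $V_\xi=\bigcap_j\ker(Q^{1/2}(B^\top)^j)=\bigl(\sum_j\Ran B^jQ^{1/2}\bigr)^\perp$, which is exactly what the Kalman hypothesis kills. In short, you have filled in the proof that the paper outsources, along the same lines as the source it cites, and you identified the one genuinely non-routine step (bridging kernels to ranges) accurately.
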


\begin{remark}[Kalman rank condition]
	If $R=0$, then the Kalman rank condition guarantees that the Ornstein-Uhlenbeck operator is hypoelliptic and
	there is an explicit formula for the semigroup $(\cT(t))_{t\geq 0}$, see \cite{Kolmogoroff-34}.
\end{remark}

Hence for matrices $B$, $Q$, and $R$ as in Lemma~\ref{lem:eq:Ornstein-Uhlenbeck-singular-space},
our spectral inequality for the partial harmonic oscillator is
applicable towards observability and null-controllability of the generalized Ornstein-Uhlenbeck operators.
This covers also situations where the sensor set is not thick.

To make this more precise, we consider some examples.
In all of these we set $d = 2m$, $m\in\NN$, and write $y = (y^{(1)},y^{(2)}) \in \RR^m\times\RR^m$ for $y \in \RR^d = \RR^{2m}$.

\subsubsection{Kolmogorov equation}\label{ssec:Kolmogorov-equation}

Set
\be\label{eq:Kolmogorov-equation}
	Q
	=
	\begin{pmatrix}
		0 & 0 \\
		0 & 2\Id_{m}
	\end{pmatrix}
	,
	\quad
	B
	=
	\begin{pmatrix}
		0 & -\Id_{m} \\
		0 & 0
	\end{pmatrix}
	,
	\quad
	\text{and}
	\quad
	R
	=
	0
	.
\ee
Then $A = \Delta_{x^{(1)}} - x^{(2)}\cdot\nabla_{x^{(1)}}$ is the Kolmogorov operator.
Since
\[
	B^2 = 0
	,
	\quad
	Q^{1/2}
	=
	\begin{pmatrix}
		0 & 0 \\
		0 & \sqrt{2}\Id_{m}
	\end{pmatrix}
	,
	\quad\text{and}\quad
	BQ^{1/2}
	=
	\begin{pmatrix}
		0 & -\sqrt{2}\Id_{m} \\
		0 & 0
	\end{pmatrix}
\]
we have
\[
	(Q^{1/2},BQ^{1/2},\dots,B^{d-1}Q^{1/2})
	=
	\begin{pmatrix}
		0 & 0 & 0 & -\sqrt{2}\Id_{m} & 0 & \dots & 0\\
		0 & \sqrt{2}\Id_{m} & 0 & 0 & 0 & \dots & 0
	\end{pmatrix}
	,
\]
so that $\mathrm{rank}(Q^{1/2},BQ^{1/2},\dots,B^{d-1}Q^{1/2}) = d$.
Thus, the Kalman rank condition is satisfied and \eqref{eq:powers-R-Ornstein-Uhlenbeck-singular-space} holds with $\cI = \emptyset$,
so that  $S(A)^\perp = \{0\}\times\RR^d$.
In particular, we recover the observability of the Kolmogorov equation from thick sensor sets $\omega$, which has already
been obtained in \cite{Alphonse-20}, see also \cite{RousseauM-16, BeauchardPS-18}.

However, if we allow for non-zero $R$, we get an example where our result is strictly stronger than \cite{Alphonse-20}: Let $B$ and $Q$ be
as in \eqref{eq:Kolmogorov-equation}. Since $B^2 = 0$, it is easy to see that the condition
\eqref{eq:powers-R-Ornstein-Uhlenbeck-singular-space} for $R \neq 0$ reduces to
\be\label{eq:reduced-kernel-condition}
	(\mathrm{ker}(R)\cap \mathrm{ker}(RB))^\perp
	=
	\RR_{\cI}^d
	.
\ee
Since for $K,L\in \RR^{m\times m}$ we have
\[
	\begin{pmatrix}
		0 & K \\
		0 & L
	\end{pmatrix}
	\cdot
	\begin{pmatrix}
		0 & -\Id_{m} \\
		0 & 0
	\end{pmatrix}
	=
	0
	,
\]
Condition \eqref{eq:reduced-kernel-condition} further reduces to
\[
	(\RR^m\times (\mathrm{ker}(K)\cap\mathrm{ker}(L)))^\perp
	=
	\mathrm{ker}(R)^\perp
	=	\RR^d_{\cI}
	\quad \text{if} \quad
	R = \begin{pmatrix}
		0 & K \\
		0 & L
	\end{pmatrix}
	.
\]
Hence, if $\{0\}\neq\mathrm{ker}(K)\cap\mathrm{ker}(L)\subset\RR^m_{\cI_2}$ for some $\cI_2\subset\{1,\dots,m\}$,
we have $S(A)^\perp = \RR^d_\cI\times\RR^d$ with $\cI = \{1,\dots,m\}\cup(m+\cI_2)$.

Therefore, our results improve upon \cite{Alphonse-20} since in view of Theorem~\ref{thm:gen}
our sensor sets do not need to be thick.

\subsubsection{Kramers-Fokker-Planck equation}\label{ssec:Kramers-Fokker-Planck-equation}

Let $\cI_1\subset\{1,\dots,m\}$, and set
\bes
	Q
	=
	\begin{pmatrix}
		0 & 0 \\
		0 & 2\Id_{m}
	\end{pmatrix}
	,
	\quad
	B
	=
	\begin{pmatrix}
		0 & \Id_m \\
		-\Id_{\cI_1} & 0
	\end{pmatrix}
	,
	\quad
	\text{and}\quad
	R
	=
	\begin{pmatrix}
		0 & 0 \\
		0 & \frac{1}{2}\Id_m
	\end{pmatrix}
	.
\ees
The operator $A$ is then given by
\[
	A
	=
	\Delta_{x^{(2)}} - \frac{1}{4}|x^{(2)}|^2 - x^{(2)}\cdot\nabla_{x^{(1)}} + \nabla_{x^{(1)}}V(x^{(1)})\cdot\nabla_{x^{(2)}}
	,
\]
where $V(x^{(1)}) = |(x^{(1)})_{\cI_1}|^2$ is the so-called \emph{external potential}.

Note that $B^{2k} = \Id_d$ and $B^{2k+1} = B$ for all $k\in\NN_0$. Since $Q$ is as in Subsection~\ref{ssec:Kolmogorov-equation}
above, we have
\[
	BQ^{1/2}
	=
	\begin{pmatrix}
		0 & \sqrt{2}\Id_m \\
		0 & 0
	\end{pmatrix}
	.
\]
Therefore,
\[
	(Q^{1/2},BQ^{1/2},\dots,B^{d-1}Q^{1/2})
	=
	(Q^{1/2},BQ^{1/2},Q^{1/2},\dots,BQ^{1/2})
\]
has rank $d$ and the Kalman rank condition is satisfied.

On the other hand, the identities for the powers of $B$ imply
\[
	\bigcap_{j = 0}^{d-1} \ker(RB^j)
	=
	\ker(R)\cap\ker(RB)
	.
\]
We have $\ker(R) = \RR^m\times\{0\}$, and since
\[
	RB
	=
	\begin{pmatrix}
		0 & 0 \\
		-\frac{1}{2}\Id_{\cI_1} & 0
	\end{pmatrix}
	,
\]
we calculate $\ker(RB) =\RR^m_{\cI_1^\complement}\times\RR^m$.
Hence,
\[
	\Biggl(\bigcap_{j = 0}^{d-1} \ker(RB^j)\Biggr)^\perp
	=
	\Bigl(\RR^m_{\cI_1^\complement}\times\{0\}\Bigr)^\perp
	=
	\RR^m_{\cI_1}\times\RR^m
	=
	\RR^d_{\cI}
\]
with $\cI = \cI_1\cup\{m+1,\dots,d\}$.
Therefore, by Lemma~\ref{lem:eq:Ornstein-Uhlenbeck-singular-space}, $S(A)^\perp = \RR^d_{\cI}\times\RR^d$.

Since $\cI\neq\emptyset$, our result generalizes those obtained in \cite[Section~4]{Alphonse-20} (and in all previous works) for
all choices of $\cI_1$. Indeed, we have shown that there are non-thick sensor sets $\omega$ from which the abstract Cauchy problem
associated to $A$ is observable, whereas \cite{Alphonse-20} only establishes observability from thick sets.

Note that
\begin{enumerate}[(i)]
	\item if $\cI_1 = \{1,\dots,m\}$ we have $V = |x_1|^2$ and $A$ is called the Kramers-Fokker-Planck operator with quadratic
	external potential. Here the singular space is $S(A) = \{0\}$, and we are in the setting of
	Subsection~\ref{ssec:zero-singular-space} above, that is, we compare $A$ with the (full) harmonic oscillator.

	\item if $\cI_1 \subsetneq \{1,\dots,m\}$, we have a Kramers-Fokker-Planck operator with partial quadratic external potential
	(or without external potential if $\cI_1 = \emptyset$) and we compare $A$ with the partial harmonic oscillator.
\end{enumerate}
%
%
\appendix

\section{The partial harmonic oscillator}\label{sec:partharmOsc}

Let $\cI,\cJ \subset \{ 1,...,d \}$. Consider
\[
	\cH_\cJ
	:=
	\{ f \in L^2(\RR^d) \colon \partial_k f \in L^2(\RR^d)\ \forall k \in \cJ \}
\]
with the norm
\[
	\norm{f}_\cJ
	:=
	\Bigl( \norm{f}_{L^2(\RR^d)}^2 + \sum_{k \in \cJ} \norm{\partial_k f}_{L^2(\RR^d)}^2 \Bigr)^{1/2}
	,\quad
	f \in \cH_\cJ
	.
\]
A standard proof shows that $(\cH_\cJ,\norm{\cdot}_\cJ)$ is complete.

Define the forms
\[
	\aa_\cJ[f , g]
	:=
	\sum_{k \in \cJ} \langle \partial_k f , \partial_k g \rangle_{L^2(\RR^d)}
	,\quad
	f,g \in \cD[\aa_\cJ] := \cH_\cJ
	,
\]
as well as $\cD[\vv_\cI] := \{ f \in L^2(\RR^d) \colon \abs{x_\cI}f \in L^2(\RR^d) \}$,
\[
	\vv_\cI[f , g]
	:=
	\langle \abs{x_\cI}f , \abs{x_\cI}g \rangle_{L^2(\RR^d)}
	,\quad
	f,g \in \cD[\vv_\cI]
	,
\]
and
\[
	\hh_{\cI,\cJ}
	:=
	\aa_\cJ + \vv_\cI
	,\quad
	\cD[\hh_{\cI,\cJ}] := \cD[\aa_\cJ] \cap \cD[\vv_\cI]
	.
\]
The nonnegative form $\aa_\cJ$ is closed since $(\cH_\cJ,\norm{\cdot}_\cJ)$ is complete, and $\vv_\cI$ is nonnegative and closed
by \cite[Proposition~10.5\,(ii)]{Schmuedgen-12}. Thus, the form $\hh_{\cI,\cJ}$ is densely defined, nonnegative, and closed by
\cite[Corollary~10.2]{Schmuedgen-12}, so that there is a unique self-adjoint operator $H_{\cI,\cJ}$ on $L^2(\RR^d)$ given by
\eqs{
	\cD(H_{\cI,\cJ})
	=
	\{ f \in \cD[\hh_{\cI,\cJ}] \colon \exists h &\in L^2(\RR^d) \text{ s.t. }\\
	&\hh_{\cI,\cJ}[f,g] = \langle h , g \rangle_{L^2(\RR^d)}\ \forall g \in \cD[\hh_{\cI,\cJ}] \}
}
and
\[
	\hh_{\cI,\cJ}[f,g]
	=
	\langle H_{\cI,\cJ}f , g \rangle_{L^2(\RR^d)}
	,\quad
	f \in \cD(H_{\cI,\cJ}),\ g \in \cD[\hh_{\cI,\cJ}]
	.
\]

Recall that
\[
	\cG_{\cI,\cJ}
	=
	\{ f \in L^2(\RR^d) \colon x^\alpha \partial^\beta f \in L^2(\RR^d)\ \forall \alpha \in \NN_{0,\cI}^d,\, \beta \in \NN_{0,\cJ}^d \}
	\subset
	\cD[\hh_{\cI,\cJ}]
	.
\]

\begin{lemma}\label{lem:reprHamiltonian}
	We have $\cG_{\cI,\cJ} \subset \cD(H_{\cI,\cJ})$ and $H_{\cI,\cJ}f = -\Delta_\cJ f + \abs{x_\cI}^2 f$ for all $f \in \cG_{\cI,\cJ}$.
	In particular, $\cG_{\cI,\cJ}$ is invariant for $H_{\cI,\cJ}$.
\end{lemma}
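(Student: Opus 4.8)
The plan is to read off the claim from the construction of $H_{\cI,\cJ}$ via the representation theorem recalled just above the lemma: $\cD(H_{\cI,\cJ})$ consists of those $f \in \cD[\hh_{\cI,\cJ}]$ for which there is $h \in L^2(\RR^d)$ with $\hh_{\cI,\cJ}[f,g] = \langle h , g \rangle_{L^2(\RR^d)}$ for all $g \in \cD[\hh_{\cI,\cJ}]$, and then $H_{\cI,\cJ}f = h$. So I would fix $f \in \cG_{\cI,\cJ}$, set $h := -\Delta_\cJ f + \abs{x_\cI}^2 f$, and check the three required points. That $f \in \cD[\hh_{\cI,\cJ}]$ is exactly the inclusion $\cG_{\cI,\cJ} \subset \cD[\hh_{\cI,\cJ}]$ noted before the lemma. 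That $h \in L^2(\RR^d)$ follows from the definition of $\cG_{\cI,\cJ}$: for $j \in \cJ$ one has $\partial_j^2 f = \partial^{2e_j}f \in L^2(\RR^d)$ since $2e_j \in \NN_{0,\cJ}^d$, and for $i \in \cI$ one has $x_i^2 f = x^{2e_i}f \in L^2(\RR^d)$ since $2e_i \in \NN_{0,\cI}^d$; summing gives $h \in L^2(\RR^d)$. It then remains to verify the form identity $\hh_{\cI,\cJ}[f,g] = \langle h , g\rangle_{L^2(\RR^d)}$ for all $g \in \cD[\hh_{\cI,\cJ}]$.

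For the form identity I would split $\hh_{\cI,\cJ} = \aa_\cJ + \vv_\cI$ and treat the two pieces separately. The potential part is immediate: for $g \in \cD[\vv_\cI]$ both $\abs{x_\cI}f$ and $\abs{x_\cI}g$ lie in $L^2(\RR^d)$, so by Cauchy--Schwarz $\abs{x_\cI}^2\overline{f}g$ is integrable and $\vv_\cI[f,g] = \langle \abs{x_\cI}f , \abs{x_\cI}g\rangle_{L^2(\RR^d)} = \langle \abs{x_\cI}^2 f , g\rangle_{L^2(\RR^d)}$ simply by regrouping the real factor $\abs{x_\cI}^2$. For the kinetic part $\aa_\cJ[f,g] = \sum_{j\in\cJ}\langle \partial_j f , \partial_j g\rangle_{L^2(\RR^d)}$ I would perform one integration by parts per coordinate, claiming $\langle \partial_j f , \partial_j g\rangle_{L^2(\RR^d)} = -\langle \partial_j^2 f , g\rangle_{L^2(\RR^d)}$ for $j \in \cJ$; this is the identity $\langle \partial_j u , v\rangle_{L^2(\RR^d)} = -\langle u , \partial_j v\rangle_{L^2(\RR^d)}$ applied with $u = \partial_j f$ (whose weak derivative $\partial_j u = \partial_j^2 f$ is in $L^2(\RR^d)$) and $v = g$ (whose weak derivative $\partial_j g$ is in $L^2(\RR^d)$ because $j \in \cJ$). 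Summing over $j \in \cJ$ gives $\aa_\cJ[f,g] = \langle -\Delta_\cJ f , g\rangle_{L^2(\RR^d)}$, and adding the potential part yields $\hh_{\cI,\cJ}[f,g] = \langle h , g\rangle_{L^2(\RR^d)}$; the representation theorem then delivers $f \in \cD(H_{\cI,\cJ})$ and $H_{\cI,\cJ}f = h = -\Delta_\cJ f + \abs{x_\cI}^2 f$.

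The one point that needs genuine care, and which I expect to be the main obstacle, is the integration-by-parts identity $\langle \partial_j u , v\rangle_{L^2(\RR^d)} = -\langle u , \partial_j v\rangle_{L^2(\RR^d)}$ for arbitrary $u,v \in L^2(\RR^d)$ with weak $\partial_j$-derivatives in $L^2(\RR^d)$, since here $u$ and $v$ are general elements of the form domain rather than Schwartz or compactly supported functions. I would prove it by approximation: combining a cutoff with a mollifier produces $u_n,v_n \in C_c^\infty(\RR^d)$ with $u_n \to u$, $\partial_j u_n \to \partial_j u$ in $L^2(\RR^d)$, and similarly for $v_n$ (for the cutoff step one uses $\partial_j(\chi_R u) = \chi_R\partial_j u + (\partial_j\chi_R)u$ with $\partial_j\chi_R$ of order $1/R$ supported in an annulus); the identity is classical for $u_n,v_n$ and passes to the limit. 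Alternatively one may simply cite the corresponding density/integration-by-parts statement from a standard reference on Sobolev spaces.

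Finally, for the invariance assertion I would check $h = -\Delta_\cJ f + \abs{x_\cI}^2 f \in \cG_{\cI,\cJ}$ directly: for arbitrary $\alpha \in \NN_{0,\cI}^d$ and $\beta \in \NN_{0,\cJ}^d$ the term $x^\alpha\partial^\beta\partial_j^2 f = x^\alpha\partial^{\beta+2e_j}f$ lies in $L^2(\RR^d)$ because $\beta+2e_j \in \NN_{0,\cJ}^d$, and the Leibniz expansion of $x^\alpha\partial^\beta(x_i^2 f)$ for $i \in \cI$ is a finite sum of terms $x^{\alpha'}\partial^{\beta'}f$ with $\alpha' \in \NN_{0,\cI}^d$ and $\beta' \in \NN_{0,\cJ}^d$ --- indeed differentiating $x_i^2$ in a direction $j \in \cJ$ yields something nonzero only when $j = i \in \cI\cap\cJ$, and then the $x$-power drops accordingly --- so each term lies in $L^2(\RR^d)$ by the definition of $\cG_{\cI,\cJ}$. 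This gives $H_{\cI,\cJ}(\cG_{\cI,\cJ}) \subset \cG_{\cI,\cJ}$ and completes the proof.
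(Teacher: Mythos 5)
Your proposal is correct and follows essentially the same route as the paper: fix $f\in\cG_{\cI,\cJ}$, check $-\Delta_\cJ f+\abs{x_\cI}^2f\in L^2(\RR^d)$, verify the form identity $\hh_{\cI,\cJ}[f,g]=\langle -\Delta_\cJ f+\abs{x_\cI}^2f,g\rangle$ for all $g\in\cD[\hh_{\cI,\cJ}]$ by integration by parts in the kinetic part and direct regrouping in the potential part, and invoke the representation theorem. The only cosmetic difference is in justifying the integration by parts: the paper does it one coordinate at a time, combining Fubini's theorem with density of $C_c^\infty(\RR)$ in $H^1(\RR)$, whereas you approximate directly in $\RR^d$ via a cutoff and mollifier (both are standard and valid; the one-dimensional route is marginally slicker since it only needs $f,\partial_k f\in L^2$ together with $g,\partial_k g\in L^2$ and never a global decomposition). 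You also spell out the Leibniz-rule check for the invariance claim, which the paper leaves implicit; that verification is correct and a reasonable thing to record.
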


\begin{proof}
	Let $f \in \cG_{\cI,\cJ}$ and $g \in \cD[\hh_{\cI,\cJ}] \subset \cH_\cJ$. Then, using Fubini's theorem and that $C_c^\infty(\RR)$
	is dense in $H^1(\RR)$, integration by parts in each coordinate of $\cJ$ yields
	\eqs{
		\hh_{\cI,\cJ}[f,g]
		&=
		\sum_{k \in \cJ} \langle \partial_k f , \partial_k g \rangle_{L^2(\RR^d)} + \langle \abs{x_\cI}f , \abs{x_\cI}g \rangle_{L^2(\RR^d)}\\
		&=
		\langle -\Delta_\cJ f , g \rangle_{L^2(\RR^d)} + \langle \abs{x_\cI}^2 f , g \rangle_{L^2(\RR^d)}\\
		&=
		\langle -\Delta_\cJ f + \abs{x_\cI}^2 f , g \rangle_{L^2(\RR^d)}
		.
	}
	Since $g \in \cD[\hh_{\cI,\cJ}]$ was arbitrary, this proves the claim.
\end{proof}%

We now prove a tensor representation for the operator $H_{\cI,\cJ}$ with $\cJ \neq \emptyset$ and derive related representations
for the elements of spectral subspaces for $H_{\cI,\cJ}$; for a more detailed discussion
of tensor products of operators, we refer to \cite[Section~7.5]{Schmuedgen-12} and \cite[Section~8.5]{Weidmann-80}.

Without loss of generality, we may reorder the coordinates of $\RR^d$ in the following way:
There are $d_1,d_2, d_3 \in \{0,\dots,d\}$ with $1 \leq d_1+d_2 \leq d$ and $d_3 = d - d_1 - d_2$
such that
\be\label{eq:IJspecific}
	\cJ
	=
	\Gamma_1\cup \Gamma_2
	\quad\text{and}\quad
	\cI
	=
	\Gamma_1\cup \Gamma_3
\ee
where
\bes
	\Gamma_1
	=
	\{ 1,\dots,d_1 \},\ \Gamma_2 = \{ d_1+1,\dots,d_1+d_2 \},\ \Gamma_3 = \{ d_1+d_2+1,\dots,d \}
	.
\ees
Analogously to $H_{\cI,\cJ}$ above, we introduce the self-adjoint nonnegative operators $H_1$, $H_2$,
and $H_3$ corresponding to the expressions
\[
	-\Delta + \abs{x}^2 \quad\text{in }L^2(\RR^{d_1}),\quad
	-\Delta \quad\text{in }L^2(\RR^{d_2}),\quad
	\abs{x}^2 \quad\text{in }L^2(\RR^{d_3}),
\]
respectively, via their quadratic forms. Specifically for $H_1$, this form reads
\[
	\hh_1[f,g]
	=
	\sum_{k=1}^{d_1} \langle \partial_k f , \partial_k g \rangle_{L^2(\RR^{d_1})} + \langle \abs{x}f , \abs{x}g \rangle_{L^2(\RR^{d_1})}
\]
for
\[
	f,g \in \cD[\hh_1] := H^1(\RR^{d_1}) \cap \{ h \in L^2(\RR^{d_1}) \colon \abs{x}h \in L^2(\RR^{d_1}) \},
\]
and similarly for $H_2$ and $H_3$.

\begin{lemma}\label{lem:reprH}
	With $\cI$ and $\cJ$ as in \eqref{eq:IJspecific}, the operator $H = H_{\cI,\cJ}$ admits the tensor representation
	\be\label{eq:reprH}
		H
		=
		H_1 \otimes I_2 \otimes I_3 + I_1 \otimes H_2 \otimes I_3 + I_1 \otimes I_2 \otimes H_3
		,
	\ee
	where $I_j$ denotes the identity operator in $L^2(\RR^{d_j})$, $j = 1,2,3$.
\end{lemma}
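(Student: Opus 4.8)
The plan is to use the coordinate splitting $x=(x^{(1)},x^{(2)},x^{(3)})$ associated with $\Gamma_1,\Gamma_2,\Gamma_3$ to identify $L^2(\RR^d)$ with $L^2(\RR^{d_1})\otimes L^2(\RR^{d_2})\otimes L^2(\RR^{d_3})$, and then to show that $H=H_{\cI,\cJ}$ and the operator $\tilde H$ on the right-hand side of \eqref{eq:reprH} are both extensions of one and the same essentially self-adjoint operator, so that they coincide. Since $H_1,H_2,H_3$ are nonnegative self-adjoint, the tensor sum $\tilde H$ is itself a well-defined nonnegative self-adjoint operator by the standard theory of tensor products of semibounded self-adjoint operators, see \cite[Section~7.5]{Schmuedgen-12} and \cite[Section~8.5]{Weidmann-80}; for three factors one simply iterates, first forming $H_1\otimes I_2+I_1\otimes H_2$ and then tensoring the result with $H_3$.

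First, the same references yield that whenever $D_j$ is a core for $H_j$, then the algebraic tensor product $D_1\otimes_{\mathrm{alg}}D_2\otimes_{\mathrm{alg}}D_3$ is a core for $\tilde H$. I would take $D_j=\cS(\RR^{d_j})$; this is a core for $H_j$ in each of the three cases (the harmonic oscillator, the free Laplacian, and multiplication by $\abs{x}^2$), since by a routine mollification-and-cutoff argument $\cS(\RR^{d_j})$ is dense in the respective operator domain with respect to the graph norm. Set $\cS_0:=\cS(\RR^{d_1})\otimes_{\mathrm{alg}}\cS(\RR^{d_2})\otimes_{\mathrm{alg}}\cS(\RR^{d_3})$, which is then a core for $\tilde H$ and satisfies $\cS_0\subset\cS(\RR^d)\subset\cG_{\cI,\cJ}$.

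Next, both operators are computed on $\cS_0$. For a pure tensor $\phi=f_1\otimes f_2\otimes f_3$ with $f_j\in\cS(\RR^{d_j})$, Lemma~\ref{lem:reprHamiltonian}, applied in dimension $d_j$ with the appropriate index sets, gives $H_1f_1=-\Delta f_1+\abs{x}^2f_1$, $H_2f_2=-\Delta f_2$, and $H_3f_3=\abs{x}^2f_3$, hence $\tilde H\phi=(-\Delta_\cJ+\abs{x_\cI}^2)\phi$; by linearity this extends to all of $\cS_0$. On the other hand, Lemma~\ref{lem:reprHamiltonian} in $\RR^d$ gives $H\phi=(-\Delta_\cJ+\abs{x_\cI}^2)\phi$ for every $\phi\in\cG_{\cI,\cJ}\supseteq\cS_0$. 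Thus $H$ and $\tilde H$ restrict on $\cS_0$ to the same symmetric operator $S:=(-\Delta_\cJ+\abs{x_\cI}^2)|_{\cS_0}$. Since $\cS_0$ is a core for $\tilde H$, we have $\tilde H=\overline{\tilde H|_{\cS_0}}=\overline{S}$, so $S$ is essentially self-adjoint; as $H$ is a closed extension of $S$, it follows that $H\supseteq\overline{S}=\tilde H$, and since $\tilde H$ is self-adjoint and therefore maximal symmetric while $H$ is a symmetric extension of it, $H=\tilde H$. The degenerate cases where some $d_j$ vanishes are handled by the convention $L^2(\RR^0)=\CC$, $H_j=0$, with the obvious modifications. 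The only point requiring genuine care is the invocation of the tensor-sum machinery for the statement that $\cS_0$ is a core for $\tilde H$; the rest is bookkeeping.
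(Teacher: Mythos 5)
Your proof is correct and lands on the same high-level strategy as the paper (show that $H$ and the tensor sum $\tilde H$ agree on a common operator core for $\tilde H$, then invoke self-adjointness), but realizes the computational step quite differently. The paper works with the larger core $\cD=\Span_\CC\{f_1\otimes f_2\otimes f_3\colon f_j\in\cD(H_j)\}$ supplied directly by \cite[Theorem~7.23]{Schmuedgen-12} and proves $\tilde H|_\cD\subset H$ through the quadratic form $\hh_{\cI,\cJ}$: it pairs $\tilde Hf$ against a test element $g\in\cD[\hh_{\cI,\cJ}]$, applies Fubini, and reduces to the one-factor form identities $\langle H_jf_j,g_j\rangle=\hh_j[f_j,g_j]$, thereby avoiding any need to know that elements of $\cD(H_j)$ satisfy a pointwise differential equation. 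You instead pass to the smaller core $\cS_0=\cS(\RR^{d_1})\otimes_{\mathrm{alg}}\cS(\RR^{d_2})\otimes_{\mathrm{alg}}\cS(\RR^{d_3})$, on which Lemma~\ref{lem:reprHamiltonian} applies both factorwise and globally, so $H$ and $\tilde H$ visibly agree as the differential expression $-\Delta_\cJ+\abs{x_\cI}^2$. This is cleaner in that it bypasses the form manipulations entirely, but it requires the auxiliary input that $\cS(\RR^{d_j})$ is an operator core for each $H_j$ — correct, and your mollification-and-cutoff sketch goes through once one notes that cutoff should be done first and that $\nabla f\in L^2$ for $f\in\cD(H_1)$ (which follows from $\cD(H_1)\subset\cD[\hh_1]$) — but this is a fact the paper itself only records later, in Corollary~\ref{cor:specH}, via Nussbaum's theorem for $H_1$, and only for the assembled $H$ rather than the factors. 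In short: same skeleton, but you trade the paper's form-theoretic computation for a pointwise one at the cost of an extra core lemma; both are sound, and yours is arguably the more transparent route for a reader who already has Lemma~\ref{lem:reprHamiltonian} in hand.
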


\begin{proof}
	Denote the operator corresponding to the right-hand side of \eqref{eq:reprH} by $\tilde{H}$. Following
	\cite[Theorem~7.23 and Exercise~7.17.a.]{Schmuedgen-12}, $\tilde{H}$ is nonnegative and self-adjoint with operator core
	$\cD := \Span_\CC\{f_1 \otimes f_2 \otimes f_3 \colon f_j \in \cD(H_j)\}$. Moreover, using the form domains of
	$H_j$, it is easy to see that $\cD \subset \cD[\hh_{\cI,\cJ}]$. We now proceed similarly as in ~\cite[Section~3]{Seelmann-21}:
	Let $f = f_1 \otimes f_2 \otimes f_3 \in \cD$ and $g \in \cD[\hh_{\cI,\cJ}]$. By Fubini's theorem we then have that
	$g(\cdot,y,z)$ belongs for almost every $(y,z) \in \RR^{d_2} \times \RR^{d_3}$ to the form domain $\cD[\hh_1]$ of $H_1$. Using
	this, we see that
	\eqs{
		\langle (H_1 \otimes I_2 \otimes I_3)f , g &\rangle_{L^2(\RR^d)}
		=
		\langle (H_1f_1 \otimes f_2 \otimes f_3) , g \rangle_{L^2(\RR^d)}\\
		&=
		\int_{\RR^{d_2}\times\RR^{d_3}} f_2(y)f_3(z) \langle H_1f , g(\cdot,y,z) \rangle_{L^2(\RR^{d_1})} \Diff(y,z)\\
		&=
		\int_{\RR^{d_2}\times\RR^{d_3}} f_2(y)f_3(z) \hh_1[f_1 , g(\cdot,y,z)] \Diff(y,z)\\
		&=
		\hh_{\Gamma_1,\Gamma_1}[f , g]
		.
	}
	In a completely analogous way, we establish
	\[
		\langle (I_1 \otimes H_2 \otimes I_3)f , g \rangle_{L^2(\RR^d)}
		=
		\hh_{\emptyset,\Gamma_2}[f , g]
	\]
	and
	\[
		\langle (I_1 \otimes I_2 \otimes H_3)f , g \rangle_{L^2(\RR^d)}
		=
		\hh_{\Gamma_3,\emptyset}[f , g]
		.
	\]
	Summing up gives
	\[
		\langle \tilde{H}f , g \rangle_{L^2(\RR^d)}
		=
		\hh_{\Gamma_1,\Gamma_1}[f , g] + \hh_{\emptyset,\Gamma_2}[f , g] + \hh_{\Gamma_3,\emptyset}[f , g]
		=
		\hh_{\cI,\cJ}[f,g]
		.
	\]
	By sesquilinearity, the latter extends to all $f \in \cD$, so that $\tilde{H}|_{\cD} \subset H$. Since $\cD$ is an
	operator core for $\tilde{H}$ and both $H$ and $\tilde{H}$ are self-adjoint, we conclude that
	$\tilde{H} = \overline{\tilde{H}|_{\cD}} = H$, which proves the claim.
\end{proof}%

\begin{corollary}\label{cor:specH}
	In the situation of Lemma~\ref{lem:reprH}, we have
	\[
		\sigma(H)
		=
		\sigma(H_1) + \sigma(H_2) + \sigma(H_3)
		,
	\]
	and the restriction of $H$ to the Schwartz functions on $\RR^d$ is essentially
	self-adjoint.
\end{corollary}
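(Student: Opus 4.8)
The plan is to leverage the tensor representation
$H = H_1 \otimes I_2 \otimes I_3 + I_1 \otimes H_2 \otimes I_3 + I_1 \otimes I_2 \otimes H_3$
from Lemma~\ref{lem:reprH}, together with the elementary single-variable facts that $H_1$ (harmonic oscillator on $\RR^{d_1}$) has pure point spectrum $\{2k+d_1 \colon k \in \NN_0\}$, that $H_2$ (free Laplacian on $\RR^{d_2}$) and $H_3$ (multiplication by $\abs{x}^2$ on $\RR^{d_3}$) have spectrum $[0,\infty)$, and that each $H_j$ is essentially self-adjoint on $\cS(\RR^{d_j})$ (classical; with the obvious conventions when some $d_j = 0$, where the corresponding factor is trivial).

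For the spectral identity I would argue directly with Weyl sequences. The three summands above are strongly commuting self-adjoint operators, so by joint functional calculus $H$ acts as multiplication by $(t_1,t_2,t_3) \mapsto t_1+t_2+t_3$ with respect to a spectral measure supported on $\sigma(H_1)\times\sigma(H_2)\times\sigma(H_3)$; this gives $\sigma(H) \subset \overline{\sigma(H_1)+\sigma(H_2)+\sigma(H_3)}$. Conversely, for $t_j \in \sigma(H_j)$ pick unit Weyl sequences $(\varphi_j^{(m)})_m \subset \cD(H_j)$ with $\norm{(H_j - t_j)\varphi_j^{(m)}}_{L^2(\RR^{d_j})} \to 0$; then $\varphi_1^{(m)} \otimes \varphi_2^{(m)} \otimes \varphi_3^{(m)}$ is a unit Weyl sequence for $H$ at $t_1+t_2+t_3$, the error being bounded by the sum of the three individual errors, so $t_1+t_2+t_3 \in \sigma(H)$. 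It then remains to note that $\sigma(H_1)+\sigma(H_2)+\sigma(H_3)$ is already closed: if $d_2 \geq 1$ or $d_3 \geq 1$ it equals $[\mu,\infty)$ with $\mu = \inf\sigma(H_1)+\inf\sigma(H_2)+\inf\sigma(H_3)$, and otherwise it is the discrete set $\sigma(H_1)$; hence the closure can be omitted. (Alternatively one may simply quote a general theorem on the spectrum of tensor sums of self-adjoint operators.)

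For essential self-adjointness I would use that $\cS(\RR^{d_j})$ is an operator core for $H_j$, $j=1,2,3$, and invoke the standard result on essential self-adjointness of tensor sums of self-adjoint operators (the same circle of results \cite[Theorem~7.23 and Exercise~7.17.a.]{Schmuedgen-12} already used in the proof of Lemma~\ref{lem:reprH}, now applied with these cores in place of the full domains) to conclude that the algebraic tensor product $\cS(\RR^{d_1}) \otimes \cS(\RR^{d_2}) \otimes \cS(\RR^{d_3})$ is an operator core for $H$. Since this algebraic tensor product is contained in $\cS(\RR^d) \subset \cG_{\cI,\cJ} \subset \cD(H)$, where the inclusions use Lemma~\ref{lem:reprHamiltonian}, the symmetric operator $H|_{\cS(\RR^d)}$ lies between an operator core for $H$ and the closed operator $H$ itself, so its closure equals $H$; that is, $H|_{\cS(\RR^d)}$ is essentially self-adjoint.

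I do not anticipate a serious obstacle: the argument is entirely a matter of assembling classical single-variable facts with the cited tensor-product results. The only points that need a little care are the degenerate cases $d_j = 0$ (where the trivial factor is simply dropped) and the verification that the set-theoretic sum $\sigma(H_1)+\sigma(H_2)+\sigma(H_3)$ is closed, which is why I spell out the short case distinction above.
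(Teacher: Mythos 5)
Your proposal is correct and follows essentially the same route as the paper: the paper obtains the spectral identity by citing a general theorem on tensor sums of self-adjoint operators \cite[Corollary~7.25]{Schmuedgen-12} (your ``alternative'' option; note that this reference gives the spectrum as the \emph{closure} of the sum, so your explicit case check that $\sigma(H_1)+\sigma(H_2)+\sigma(H_3)$ is already closed is a useful detail the paper leaves implicit), and proves essential self-adjointness by noting each $H_j$ is essentially self-adjoint on $\cS(\RR^{d_j})$ (using Nussbaum's theorem for $H_1$, since its eigenfunctions are Hermite functions), then invoking the tensor-sum criterion \cite[Theorem~8.33]{Weidmann-80} to conclude that the algebraic tensor product $\Span_\CC\{f_1\otimes f_2\otimes f_3\colon f_j\in\cS(\RR^{d_j})\}\subset\cS(\RR^d)$ is a core, exactly as you do. Your Weyl-sequence argument for the inclusion $\sigma(H_1)+\sigma(H_2)+\sigma(H_3)\subset\sigma(H)$ is a more self-contained variant but not a genuinely different strategy.
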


\begin{proof}
	The first part follows from \cite[Corollary~7.25]{Schmuedgen-12}; cf.\ also \cite[Exercise 7.18.a.]{Schmuedgen-12}.
	For the second part we observe that $H_2$ and $H_3$ are essentially self-adjoint on $\cS(\RR^{d_2})$ and $\cS(\RR^{d_3})$, respectively.
	Moreover, since the eigenfunctions of $H_1$ are Hermite functions and, in particular, Schwartz functions, $H_1$ is also
	essentially self-adjoint on $\cS(\RR^{d_1})$ by Nussbaum's theorem, see, e.g., \cite[Theorem~7.14]{Schmuedgen-12}.
	Hence, $H$ is essentially self-adjoint on $\Span_\CC\{f_1\otimes f_2\otimes f_3\colon f_j\in\cS(\RR^{d_j})\}\subset\cS(\RR^d)$, see,
	e.g., \cite[Theorem~8.33]{Weidmann-80}.
\end{proof}%

\begin{remark}\label{rem:truePartHarm}
	If $d_3 = 0$, then the third tensor factor can be dropped here, that is, we then have
	$H = H_1 \otimes I_2 + I_1 \otimes H_2$ and $\sigma(H) = \sigma(H_1) + \sigma(H_2)$.
\end{remark}

Since $H_1$ has pure point spectrum, in the situation of the preceding remark we obtain the following result.

\begin{corollary}\label{cor:specfamH}
	In the situation of Lemma~\ref{lem:reprH} with $d_3 = 0$,
	every $f \in \Ran P_{(-\infty,\lambda]}(H)$, $\lambda \geq 0$,
	can be represented as a finite sum
	\[
		f
		=
		\sum_k \phi_k \otimes \psi_k
	\]
	with suitable $\phi_k \in \Ran P_{(-\infty,\lambda]}(H_1)$ and $\psi_k \in \Ran P_{(-\infty,\lambda]}(H_2)$.
	In particular, $f$ can be extended to an analytic function on $\CC^d$.
	Moreover, $(\partial^\beta f)(\cdot,y)$ belongs to $\Ran P_{(-\infty,\lambda]}(H_1)$ for all $y \in \RR^{d_2}$ and all multiindices
	$\beta \in \NN_{0,\cI^\complement}^d$, and $(\partial^\beta f)(x,\cdot)$ belongs to $\Ran P_{(-\infty,\lambda]}(H_2)$ for all
	$x \in \RR^{d_1}$ and all $\beta \in \NN_{0,\cI}^d$.
\end{corollary}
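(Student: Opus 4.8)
The plan is to reduce everything to the discreteness of the spectrum of $H_1$. By Remark~\ref{rem:truePartHarm} (the case $d_3 = 0$ of Lemma~\ref{lem:reprH}) we have $H = H_1 \otimes I_2 + I_1 \otimes H_2$, and $H_1$ is the $d_1$-dimensional harmonic oscillator with pure point spectrum $\{2k+d_1 : k \in \NN_0\}$ and finite-dimensional eigenspaces $E_k$, each spanned by Hermite functions. Using the unitary identification $L^2(\RR^d) \cong \bigoplus_{k \in \NN_0} E_k \otimes L^2(\RR^{d_2})$, the operator $H$ is block diagonal and acts on the $k$-th summand as $(2k+d_1) + H_2$ (with $H_2$ acting in the second variable). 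Hence its spectral projection satisfies
\[
	P_{(-\infty,\lambda]}(H)
	=
	\bigoplus_{k \in \NN_0} \mathrm{Id}_{E_k} \otimes P_{(-\infty,\lambda-2k-d_1]}(H_2)
	,
\]
with the convention $P_{(-\infty,\mu]}(H_2) = 0$ for $\mu < 0$; only the finitely many $k$ with $2k+d_1 \leq \lambda$ contribute. Picking for each such $k$ an orthonormal basis $(h_{k,i})_i$ of $E_k$, every $f \in \Ran P_{(-\infty,\lambda]}(H)$ is the finite sum $f = \sum_{k,i} h_{k,i} \otimes \psi_{k,i}$ with $\psi_{k,i} \in \Ran P_{(-\infty,\lambda-2k-d_1]}(H_2) \subset \Ran P_{(-\infty,\lambda]}(H_2)$ and $h_{k,i} \in E_k \subset \Ran P_{(-\infty,\lambda]}(H_1)$, which is the asserted representation.

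For the analytic continuation I would argue term by term. Each Hermite function $h_{k,i}$ is a polynomial times a Gaussian, hence extends to an entire function on $\CC^{d_1}$. On the other hand, $\psi \in \Ran P_{(-\infty,\mu]}(H_2) = \Ran P_{(-\infty,\mu]}(-\Delta)$ has Fourier transform supported in the closed ball of radius $\sqrt{\mu}$, so the Paley--Wiener theorem provides an entire extension of $\psi$ to $\CC^{d_2}$. The finite sum $f = \sum_{k,i} h_{k,i} \otimes \psi_{k,i}$ therefore extends to an entire function on $\CC^{d_1} \times \CC^{d_2} = \CC^d$, which we henceforth also denote by $f$; in particular all pointwise evaluations and partial derivatives below are classical.

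The two ``moreover'' assertions then follow by differentiating this finite representation, noting that in the situation of Lemma~\ref{lem:reprH} with $d_3 = 0$ one has $\cI = \{1,\dots,d_1\}$, so that $\NN_{0,\cI}^d$ and $\NN_{0,\cI^\complement}^d$ consist of the multiindices supported in the first, respectively second, tensor factor. For $\beta \in \NN_{0,\cI^\complement}^d$ we get $\partial^\beta f = \sum_{k,i} h_{k,i} \otimes \partial^\beta \psi_{k,i}$, so $(\partial^\beta f)(\cdot,y) = \sum_{k,i} (\partial^\beta \psi_{k,i})(y)\,h_{k,i}$ is a finite linear combination of elements of the subspace $\Ran P_{(-\infty,\lambda]}(H_1)$, hence lies in it for every $y \in \RR^{d_2}$. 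Symmetrically, for $\beta \in \NN_{0,\cI}^d$ one has $\partial^\beta f = \sum_{k,i} (\partial^\beta h_{k,i}) \otimes \psi_{k,i}$, and $(\partial^\beta f)(x,\cdot) = \sum_{k,i} (\partial^\beta h_{k,i})(x)\,\psi_{k,i}$ lies in the subspace $\Ran P_{(-\infty,\lambda]}(H_2)$ for every $x \in \RR^{d_1}$.

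I expect the only point requiring genuine care to be the block-diagonalization of $P_{(-\infty,\lambda]}(H)$: it rests on the fact that $H_1$ has \emph{pure point} spectrum, so that the spectral measure of $H$ is generated by the projections $\mathrm{Id}_{E_k} \otimes P_{(-\infty,\mu]}(H_2)$, together with the standard theory of tensor sums of self-adjoint operators already used in Lemma~\ref{lem:reprH} and Corollary~\ref{cor:specH}. Everything else is bookkeeping.
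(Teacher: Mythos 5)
Your proof is correct and follows essentially the same strategy as the paper: decompose $f$ along the eigenbasis of $H_1$, exploit the pure point spectrum of $H_1$ to block-diagonalize $P_{(-\infty,\lambda]}(H)$, and read off the finite tensor-sum representation. The paper obtains the identity $P_{(-\infty,\lambda]}(H)=\sum_{\mu\le\lambda}P_{\{\mu\}}(H_1)\otimes P_{(-\infty,\lambda-\mu]}(H_2)$ by citing Weidmann (Theorem~8.34 and Exercise~8.21), whereas you derive it directly from the block structure of $H$ on $\bigoplus_k E_k\otimes L^2(\RR^{d_2})$; these are the same fact. The only genuinely different step is the analyticity: the paper appeals to Hartogs' theorem on separate analyticity, while you avoid Hartogs entirely by observing that each summand $h_{k,i}\otimes\psi_{k,i}$ is a product of a Hermite function (entire on $\CC^{d_1}$) and a Paley--Wiener function (entire on $\CC^{d_2}$), so the finite sum is manifestly entire on $\CC^d$. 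Your route is a bit more concrete and self-contained for this specific tensor situation, at the cost of invoking Paley--Wiener; the paper's Hartogs argument is shorter once you accept the deeper theorem. The ``moreover'' part is handled identically in both (termwise differentiation of the finite sum and evaluating at a fixed slice).
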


\begin{proof}
	We proceed similarly as in the proof of \cite[Lemma~2.3]{EgidiS-21}.
	Let $f \in \Ran P_{(-\infty,\lambda]}(H)$ for some $\lambda \geq 0$, and let $(\phi_k)_k$ be an orthonormal basis of eigenfunctions
	of $H_1$ with corresponding eigenvalues $\mu_k$. Write
	$f(x,y) = \sum_k \langle f(\cdot,y) , \phi_k \rangle_{L^2(\RR^{d_1})} \phi_k(x) = \sum_k \phi_k(x) g_k(y)$, where
	$g_k \in L^2(\RR^{d_2})$ is given by $g_k(y) = \langle f(\cdot,y) , \phi_k \rangle_{L^2(\RR^{d_1})}$.
	
	By \cite[Theorem~8.34 and Exercise~8.21]{Weidmann-80},
	the spectral family $P_{(-\infty,\lambda]}(H)$ for $H$
	admits the representation
	\[
		P_{(-\infty,\lambda]}(H)
		=
		\sum_{\mu \leq \lambda} P_{\{\mu\}}(H_1) \otimes P_{(-\infty,\lambda-\mu]}(H_2)
		.
	\]
	This implies that
	\[
		f
		=
		P_{(-\infty,\lambda]}(H)f
		=
		\sum_{k\colon \mu_k \leq \lambda} \phi_k \otimes \psi_k
	\]
	with $\psi_k = P_{(-\infty,\lambda-\mu_k]}(H_2)g_k \in \Ran P_{(-\infty,\lambda-\mu_k]}(H_2) \subset \Ran P_{(-\infty,\lambda]}(H_2)$.
	This shows the first part of the statement. The remaining part is then clear by using the corresponding properties of $\phi_k$
	and $\psi_k$ and Hartogs' theorem on separate analyticity.
\end{proof}%

We close this appendix by showing that for general $\cI,\cJ \subset \{1,\dots,d\}$ we can trade the parts of the potential corresponding
to elements in $\cI\setminus\cJ$ for additional derivatives via an appropriate partial Fourier transform. Let $m = \#(\cI\setminus\cJ)$,
and write
\[
	x
	=
	(x^{(1)},x^{(2)})
	\quad \text{with}\quad
	x^{(1)}
	\in
	\RR^d_{\cI\setminus\cJ}
	,
	\quad
	x^{(2)}
	\in
	\RR^d_{(\cI\setminus\cJ)^\complement}
	.
\]
Consider the partial Fourier transform
\be\label{eq:partFourierTransform}
	(\cF_{\cI\setminus\cJ} f)(x)
	=
	\frac{1}{(2\pi)^{m/2}}
	\int_{\RR^d_{\cI\setminus\cJ}} f(\eta,x^{(2)}) \euler^{-i \eta\cdot x^{(1)}}\Diff{\eta}
	,\quad
	f \in L^2(\RR^d)
	,
\ee
which by Plancherel's and Fubini's theorems defines a unitary operator on $L^2(\RR^d)$. With this transform, we can now show that
$H_{\cI,\cJ}$ is unitarily equivalent to a partial harmonic oscillator $H_{\cI',\cJ'}$ with $\cJ' \supset \cI'$.

\begin{lemma}\label{lem:operators-and-partial-ft}
	With $\cF_{\cI\setminus\cJ}$ as in \eqref{eq:partFourierTransform} we have
	\[
		H_{\cI,\cJ}
		=
		\cF_{\cI\setminus\cJ}^{-1}
		H_{\cI\cap\cJ,\cI\cup\cJ}
		\cF_{\cI\setminus\cJ}
		.
	\]
	In particular, for all $\lambda \geq 0$ we have
	\[
		\cF_{\cI\setminus\cJ}\Ran P_{(-\infty,\lambda]}(H_{\cI,\cJ})
		=
		\Ran P_{(-\infty,\lambda]}(H_{\cI\cap\cJ,\cI\cup\cJ})
		.
	\]
\end{lemma}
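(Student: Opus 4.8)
The plan is to verify the operator identity first on the Schwartz space $\cS(\RR^d)$, where both sides act as explicit differential operators, and then to promote it to the full self-adjoint operators by an essential self-adjointness argument; the statement on spectral subspaces then follows at once from functional calculus.

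First, recall that $\cF_{\cI\setminus\cJ}$ restricts to a linear bijection of $\cS(\RR^d)$ onto itself, that $\cS(\RR^d) \subset \cG_{\cI,\cJ} \cap \cG_{\cI\cap\cJ,\cI\cup\cJ}$, and that by Lemma~\ref{lem:reprHamiltonian} we have $H_{\cI,\cJ}f = -\Delta_\cJ f + \abs{x_\cI}^2 f$ and $H_{\cI\cap\cJ,\cI\cup\cJ}f = -\Delta_{\cI\cup\cJ}f + \abs{x_{\cI\cap\cJ}}^2 f$ for $f \in \cS(\RR^d)$. On $\cS(\RR^d)$ the partial Fourier transform in \eqref{eq:partFourierTransform} satisfies the classical intertwining relations $\cF_{\cI\setminus\cJ}\circ x_j = \ii\partial_j\circ\cF_{\cI\setminus\cJ}$ and $\cF_{\cI\setminus\cJ}\circ\partial_j = \ii x_j\circ\cF_{\cI\setminus\cJ}$ for $j \in \cI\setminus\cJ$ (here $x_j$ denotes multiplication by the $j$-th coordinate), while $\cF_{\cI\setminus\cJ}$ commutes with both $x_j$ and $\partial_j$ for $j \notin \cI\setminus\cJ$. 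In particular $\cF_{\cI\setminus\cJ}\circ x_j^2 = -\partial_j^2\circ\cF_{\cI\setminus\cJ}$ for $j \in \cI\setminus\cJ$. Applying this term by term to $H_{\cI,\cJ}f$: since $\cJ \cap (\cI\setminus\cJ) = \emptyset$, the terms $-\partial_j^2$ with $j \in \cJ$ commute through; the terms $x_j^2$ with $j \in \cI\cap\cJ$ commute through; and the terms $x_j^2$ with $j \in \cI\setminus\cJ$ turn into $-\partial_j^2$. Using $\cJ \cup (\cI\setminus\cJ) = \cI\cup\cJ$ (a disjoint union), this yields
\[
	\cF_{\cI\setminus\cJ}\bigl(H_{\cI,\cJ}f\bigr)
	=
	-\Delta_{\cI\cup\cJ}\bigl(\cF_{\cI\setminus\cJ}f\bigr) + \abs{x_{\cI\cap\cJ}}^2\bigl(\cF_{\cI\setminus\cJ}f\bigr)
	=
	H_{\cI\cap\cJ,\cI\cup\cJ}\bigl(\cF_{\cI\setminus\cJ}f\bigr)
\]
for every $f \in \cS(\RR^d)$, that is, $H_{\cI,\cJ}f = \cF_{\cI\setminus\cJ}^{-1} H_{\cI\cap\cJ,\cI\cup\cJ} \cF_{\cI\setminus\cJ}f$ on $\cS(\RR^d)$.

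To remove the restriction to $\cS(\RR^d)$, set $A := \cF_{\cI\setminus\cJ}^{-1} H_{\cI\cap\cJ,\cI\cup\cJ} \cF_{\cI\setminus\cJ}$, which is self-adjoint as a unitary conjugate of the self-adjoint operator $H_{\cI\cap\cJ,\cI\cup\cJ}$. By Corollary~\ref{cor:specH} (applied, after a suitable reordering of coordinates, to both operators; in any degenerate case essential self-adjointness of the corresponding multiplication or constant-coefficient differential operator on $\cS(\RR^d)$ is classical), $H_{\cI\cap\cJ,\cI\cup\cJ}$ is essentially self-adjoint on $\cS(\RR^d)$, and since $\cF_{\cI\setminus\cJ}$ maps $\cS(\RR^d)$ onto itself, $\cS(\RR^d)$ is a core for $A$. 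Thus $A = \overline{A|_{\cS(\RR^d)}}$. By the computation above, $A|_{\cS(\RR^d)} = H_{\cI,\cJ}|_{\cS(\RR^d)}$, whose closure is $H_{\cI,\cJ}$ itself, again by Corollary~\ref{cor:specH}. Hence $A = H_{\cI,\cJ}$, which is the claimed identity.

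Finally, unitary conjugation commutes with the Borel functional calculus, so the operator identity gives $P_{(-\infty,\lambda]}(H_{\cI,\cJ}) = \cF_{\cI\setminus\cJ}^{-1} P_{(-\infty,\lambda]}(H_{\cI\cap\cJ,\cI\cup\cJ}) \cF_{\cI\setminus\cJ}$ for every $\lambda \geq 0$; taking ranges yields $\cF_{\cI\setminus\cJ}\Ran P_{(-\infty,\lambda]}(H_{\cI,\cJ}) = \Ran P_{(-\infty,\lambda]}(H_{\cI\cap\cJ,\cI\cup\cJ})$. I do not expect a genuine obstacle here; the only points requiring attention are the bookkeeping of the index sets $\cJ\cup(\cI\setminus\cJ)=\cI\cup\cJ$ and the potential set collapsing to $\cI\cap\cJ$, and making sure essential self-adjointness on $\cS(\RR^d)$ is invoked for both operators.
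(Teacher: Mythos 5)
Your proof is correct, but it takes a genuinely different route from the paper's. The paper argues entirely at the level of quadratic forms: it observes that $\cF_{\cI\setminus\cJ}$ maps the form domain $\cD[\hh_{\cI,\cJ}]$ bijectively onto $\cD[\hh_{\cI\cap\cJ,\cI\cup\cJ}]$ and intertwines the two forms, $\hh_{\cI\cap\cJ,\cI\cup\cJ}[\cF_{\cI\setminus\cJ}f,\cF_{\cI\setminus\cJ}g]=\hh_{\cI,\cJ}[f,g]$, which determines the unitary equivalence of the associated self-adjoint operators directly via the representation theorem. You instead verify the operator intertwining pointwise on the Schwartz core and then promote it using essential self-adjointness of both operators on $\cS(\RR^d)$, which is supplied by Corollary~\ref{cor:specH}. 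Both are valid. What the paper's approach buys is self-containedness and uniformity: it never needs to invoke Corollary~\ref{cor:specH}, works directly in $L^2$ with the form domains as defined, and applies verbatim to every choice of index sets including $\cJ=\emptyset$. Your approach is more calculational and arguably more transparent (explicit differential operators on Schwartz functions), but it inherits the standing assumption $\cJ\neq\emptyset$ from Lemma~\ref{lem:reprH}/Corollary~\ref{cor:specH}, so the degenerate cases have to be handled separately — you correctly flag this, but the paper's route avoids the case split entirely. One further small note: the relation $\cF_{\cI\setminus\cJ}\circ\partial_j=\ii x_j\circ\cF_{\cI\setminus\cJ}$ you record for $j\in\cI\setminus\cJ$ is never used, since $\cJ\cap(\cI\setminus\cJ)=\emptyset$; only the $x_j\mapsto\ii\partial_j$ intertwining and the commutation for $j\notin\cI\setminus\cJ$ enter.
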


\begin{proof}
	We first observe that for $j \in \cJ$ the partial derivative $\partial_j$ commutes with $\cF_{\cI\setminus\cJ}$, and that for
	$j \in \cI \cap \cJ$ the multiplication by $x_j$ commutes with $\cF_{\cI\setminus\cJ}$. Moreover, for $f \in L^2(\RR^d)$ and
	$k \in \cI \setminus \cJ$ we have $x_k f \in L^2(\RR^d)$ if and only if $\partial_k\cF_{\cI\setminus\cJ} f \in L^2(\RR^d)$ and,
	in this case, $\partial_k \cF_{\cI\setminus\cJ} f = -\ii\cF_{\cI\setminus\cJ}(x_kf)$. With this, we easily see that
	$f \in \cD[\hh_{\cI,\cJ}]$ if and only if $\cF_{\cI\setminus\cJ} f \in \cD[\hh_{\cI\cap\cJ,\cI\cup\cJ}]$ and that
	\[
		\hh_{\cI\cap\cJ,\cI\cup\cJ}[\cF_{\cI\setminus\cJ} f,\cF_{\cI\setminus\cJ} g]
		=
		\hh_{\cI,\cJ}[f,g]
		\quad
		\text{for all}
		\quad
		f,g
		\in
		\cD[\hh_{\cI,\cJ}]
		,
	\]
	which proves the claim.
\end{proof}%

\newcommand{\etalchar}[1]{$^{#1}$}

\end{document}